\tikzstyle{v} = [circle, draw, inner sep=2pt, minimum size=3pt, fill=black]
\tikzstyle{l} = [rectangle, draw, rounded corners]
\theoremstyle{plain}
\newtheorem{theorem}{Theorem}[section]
\newtheorem{lemma}[theorem]{Lemma}
\newtheorem{proposition}[theorem]{Proposition}
\theoremstyle{definition}
\newtheorem{definition}[theorem]{Definition}
\newtheorem{example}[theorem]{Example}
\newtheorem{remark}[theorem]{Remark}
\DeclareMathOperator{\Cox}{Cox}
\DeclareMathOperator{\Cat}{Cat}
\DeclareMathOperator{\Shi}{Shi}
\DeclareMathOperator{\Ish}{Ish}
\DeclareMathOperator{\Der}{Der}
\newcommand{\A}{\mathcal{A}}
\newcommand{\B}{\mathcal{B}}
\newcommand{\M}{\mathcal{M}}
\newcommand{\Z}{\mathbb{Z}}
\newcommand{\K}{\mathbb{K}}
\newcommand{\R}{\mathbb{R}}
\newcommand{\aff}{\stackrel{\mathrm{aff}}{=}}
\newcommand{\codim}{\operatorname{codim}}
\newcommand{\cc}{\mathbf{c}} 
\newcommand{\tbf}{\textbf}
\begin{document}

\title{Vertex-weighted Digraphs and Freeness of Arrangements Between Shi and Ish}
\author{
Takuro Abe
\thanks{
Institute of Mathematics for Industry, Kyushu University, Fukuoka 819-0395, Japan.
E-mail address: abe@imi.kyushu-u.ac.jp.
} \and
Tan Nhat Tran
\thanks{
Department of Mathematics, Hokkaido University, Sapporo, Hokkaido 060-0810, Japan. 
Current address: Fakult\"at f\"ur Mathematik, Ruhr-Universit\"at Bochum, D-44780 Bochum, Germany.
E-mail address: tan.tran@ruhr-uni-bochum.de. 
}\and
Shuhei Tsujie
\thanks{Department of Mathematics, Hokkaido University of Education, Asahikawa, Hokkaido 070-8621, Japan. 
E-mail address: tsujie.shuhei@a.hokkyodai.ac.jp.}
}

\date{\today}

\maketitle

\begin{abstract}
We introduce and study a digraph analogue of Stanley's $\psi$-graphical arrangements from the perspectives of combinatorics and freeness.
Our arrangements form a common generalization of various classes of arrangements in literature including the Catalan arrangement, the Shi arrangement, the Ish arrangement, and especially the arrangements interpolating between Shi and Ish recently introduced by Duarte and Guedes de Oliveira. 
The arrangements between Shi and Ish all are proved to have the same characteristic polynomial with all integer roots, thus raising the natural question of their freeness.
We define two operations on digraphs, which we shall call king and coking elimination operations and prove that subject to certain conditions on the weight $\psi$,  the operations preserve the characteristic polynomials and freeness of the associated  arrangements. 
As an application, we affirmatively prove that  the arrangements between Shi and Ish all are free, and among them only the Ish arrangement has supersolvable cone. 
\end{abstract}

{\footnotesize \textit{Keywords}: 
Vertex-weighted digraph, Hyperplane arrangement, Catalan arrangement, Shi arrangement, Ish arrangement, between Shi and Ish, Free arrangement, Characteristic polynomial  
}

{\footnotesize \textit{2020 MSC}: 
52C35, 
05C22, 
13N15 
}
    
\section{Introduction}
\quad
Let $ \mathbb{K} $ be a field and let $ \mathbb{K}^{\ell} $ be the  $ \ell $-dimensional vector space over $ \mathbb{K} $. 
An \textbf{arrangement} $ \mathcal{A} $ is a finite collection of hyperplanes in  $ \mathbb{K}^{\ell} $. 
We say that $ \mathcal{A} $ is \textbf{central} if every hyperplane in $ \mathcal{A} $ passes through the origin. 

Let  $ \mathcal{A} $ be an arrangement.
Define the \textbf{intersection poset} $ L(\mathcal{A}) $ of $ \mathcal{A} $ by 
\begin{align*}
L(\mathcal{A}) \coloneqq \Set{\bigcap_{H \in \mathcal{B}}H \neq \varnothing |  \mathcal{B} \subseteq \mathcal{A} },
\end{align*}
where the partial order is given by reverse inclusion $X\le Y\Leftrightarrow Y\subseteq X$ for $X, Y \in L(\A)$. 
We agree that $ \mathbb{K}^{\ell} $ is a unique minimal element in $ L(\mathcal{A}) $ as the intersection over the empty set. 
Thus $ L(\mathcal{A}) $ is a semi-lattice which can be equipped with the rank function $ r(X) \coloneqq \operatorname{codim}(X) $ for $X \in L(\mathcal{A})$. 
We also define the \textbf{rank} $r(\A)$ of $\A$ as the rank of a maximal element of $L(\A)$.
The intersection poset $ L(\mathcal{A}) $ is sometimes referred to as the  \textbf{combinatorics} of  $ \mathcal{A} $.

The \textbf{characteristic polynomial} $ \chi_{\mathcal{A}}(t) \in \mathbb{Z}[t] $ of $ \mathcal{A} $ is defined by
\begin{align*}
\chi_{\mathcal{A}}(t) \coloneqq \sum_{X \in L(\mathcal{A})}\mu(X)t^{\dim X}, 
\end{align*}
where $ \mu $ denotes the \textbf{M\"{o}bius function} $ \mu \colon L(\mathcal{A}) \to \mathbb{Z} $ defined recursively by 
\begin{align*}
\mu\left( \mathbb{K}^{\ell} \right) \coloneqq 1 
\quad \text{ and } \quad 
\mu(X) \coloneqq -\sum_{\substack{Y \in L(\mathcal{A}) \\ X \subsetneq Y}}\mu(Y). 
\end{align*}

Let $ \{x_{1}, \dots, x_{\ell}\} $ be a basis for the dual space $ \left(\mathbb{K}^{\ell}\right)^{\ast} $ and let $ S \coloneqq \mathbb{K}[x_{1}, \dots, x_{\ell}] $. 
The \textbf{defining polynomial} $Q(\A)$ of $\A$ is given by
$$Q(\A)\coloneqq \prod_{H \in \A} \alpha_H \in S,$$
where $ \alpha_H=a_1x_1+\cdots+a_\ell x_\ell+d$  $(a_i, d \in \mathbb{K})$ satisfies $H = \ker \alpha_H$. 
The operation of \textbf{coning} is a standard way to pass from any arrangement to a central one. 
The \textbf{cone} $ \textbf{c}\A$ over $\A$ is the central arrangement in $\mathbb{K}^{\ell+1}$ with the defining polynomial
$$Q(\textbf{c}\A)\coloneqq z\prod_{H \in \A} {}^h\alpha_H \in \mathbb{K}[x_1,\ldots, x_\ell,z],$$
where $ {}^h\alpha_H=a_1x_1+\cdots+a_\ell x_\ell+dz$ is the homogenization of $\alpha_H$, and $z=0$ is the \tbf{infinite hyperplane}, denoted $H_{\infty}$. 
The characteristic polynomials of $\A$ and $\textbf{c}\A$ are related by the following simple formula (e.g., \cite[Proposition 2.51]{OT92}):
$$\chi_{\textbf{c}\A}(t)=(t-1)\chi_\A(t).$$

The concept of free arrangements was defined by Terao for central arrangements \cite{T80}.
Given a central arrangement $ \mathcal{A} $, the \textbf{module $D(\mathcal{A}) $ of logarithmic derivations}  is defined by 
$$D(\A)\coloneqq  \{ \theta\in \Der(S) \mid \theta(\alpha_H) \in \alpha_H S \mbox{ for all } H \in \A\},$$
where $ \Der(S) $ denotes the set of derivations of $ S $ over $\mathbb{K}$.

\begin{definition}
A central arrangement $ \mathcal{A} $ is called \textbf{free} with the multiset $ \exp(\mathcal{A}) = \{d_{1}, \dots, d_{\ell}\} $ of \textbf{exponents} if $D(\A)$ is a free $S$-module with a homogeneous basis $ \{\theta_{1}, \dots, \theta_{\ell}\}$ such that $ \deg \theta_{i} = d_{i} $ for each $ i $.  
\end{definition}

Though the freeness was defined in an algebraic sense, it is related to the combinatorics of arrangements due to a remarkable result of Terao. 

\begin{theorem}[Factorization Theorem, e.g., \cite{T81}, {\cite[Theorem 4.137]{OT92}}]\label{thm:Factorization}
If $\A$ is free with $\exp(\A) =\{d_{1}, \dots, d_{\ell}\} $, then 
$$\chi_\A(t)= \prod_{i=1}^\ell (t-d_i).$$
\end{theorem}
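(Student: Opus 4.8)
The plan is to reduce the statement to a computation of Hilbert series. Write $S=\K[x_1,\dots,x_\ell]$ with its standard grading by polynomial degree, and for a finitely generated graded $S$-module $M=\bigoplus_k M_k$ let $\operatorname{Hilb}(M,x)=\sum_k (\dim_\K M_k)\,x^k$ denote its Hilbert series. Since $\operatorname{Hilb}(S,x)=(1-x)^{-\ell}$, freeness of $D(\A)$ with $\exp(\A)=\{d_1,\dots,d_\ell\}$ translates at once into
\begin{align*}
\operatorname{Hilb}(D(\A),x)=\frac{x^{d_1}+\cdots+x^{d_\ell}}{(1-x)^\ell}.
\end{align*}
The crux is then to recover $\chi_\A(t)$ from such Hilbert-series data; for this I would invoke the Solomon--Terao formula, which expresses $\chi_\A(t)$, for an arbitrary central $\A$, in terms of the Hilbert series of the modules $D^p(\A)$ of logarithmic $p$-derivations (with $D^0(\A)=S$ and $D^1(\A)=D(\A)$).

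Next I would exploit that freeness propagates to all exterior powers: when $D(\A)$ is $S$-free one has $D^p(\A)\cong\bigwedge^p_S D(\A)$, again free, with basis indexed by the $p$-subsets of $\{\theta_1,\dots,\theta_\ell\}$. Hence
\begin{align*}
\operatorname{Hilb}(D^p(\A),x)=\frac{e_p\!\left(x^{d_1},\dots,x^{d_\ell}\right)}{(1-x)^\ell},
\end{align*}
where $e_p$ is the $p$-th elementary symmetric polynomial, and the generating function assembles into the product
\begin{align*}
\sum_{p=0}^{\ell}\operatorname{Hilb}(D^p(\A),x)\,y^p=\frac{1}{(1-x)^\ell}\prod_{i=1}^{\ell}\bigl(1+x^{d_i}y\bigr).
\end{align*}

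Substituting the variable $y=y(x,t)$ prescribed by the Solomon--Terao formula (which satisfies $y(1,t)=-1$, so that every factor $1+x^{d_i}y$ vanishes at $x=1$), each factor acquires a simple zero there: writing $1+x^{d_i}y=(1-x^{d_i})+x^{d_i}\bigl(y+1\bigr)$ and using $1-x^{d_i}=(1-x)(1+x+\cdots+x^{d_i-1})$ together with the vanishing of $y+1$ at $x=1$, one pulls a single factor $(1-x)$ out of each term, cancelling the denominator $(1-x)^\ell$ exactly. Passing to the limit $x\to 1$ turns each surviving bracket into a linear polynomial in $t$: the geometric sum $1+x+\cdots+x^{d_i-1}$ tends to $d_i$, while the contribution of $y+1$ supplies the term in $t$. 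Assembling the $\ell$ factors and absorbing the global sign $(-1)^\ell$ of the formula then yields exactly $\prod_{i=1}^{\ell}(t-d_i)$, as claimed.

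I expect the main obstacle to be the Solomon--Terao formula itself: it is the one genuinely nontrivial ingredient, and making the argument self-contained would require developing the modules $D^p(\A)$ and the complex/local-cohomology argument behind that formula. A secondary point requiring care is the claim that freeness of $D(\A)$ forces $D^p(\A)\cong\bigwedge^p_S D(\A)$ for every $p$, together with the bookkeeping of the global sign and of the $(1-x)$-cancellation in the limit. An alternative, more elementary but lengthier route would be an induction on $|\A|$ through the short exact sequence relating $D(\A)$ to a deletion and a restriction; that approach, however, trades the Solomon--Terao input for the compatibility of exponents under addition--deletion and so relies on a different package of machinery.
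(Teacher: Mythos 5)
Your outline is correct and follows essentially the same route as the proof behind the paper's citation: the paper states this as a known result of Terao with a pointer to \cite[Theorem 4.137]{OT92}, and the argument there is exactly the one you describe --- freeness gives $D^p(\A)\cong\bigwedge^p_S D(\A)$ with Hilbert series $e_p(x^{d_1},\dots,x^{d_\ell})/(1-x)^\ell$, and substituting into the Solomon--Terao formula with $y=t(x-1)-1$ factors each bracket as $(1-x)\bigl[(1+x+\cdots+x^{d_i-1})-tx^{d_i}\bigr]$, so the limit $x\to1$ yields $(-1)^\ell\prod_i(d_i-t)=\prod_i(t-d_i)$. The only caveat is that your write-up is a plan rather than a full proof, with the Solomon--Terao formula and the identification $D^p(\A)=\bigwedge^p_S D(\A)$ taken as black boxes, as you yourself acknowledge.
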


In the remainder of the paper, unless otherwise explicitly stated, assume $\K=\R$.\footnote{For most applications we will only be concerned with arrangements that are originally defined over $\R$, though many of our arguments hold true for fields of characteristic $ 0 $.}
Our starting examples are two specific non-central arrangements in $\R^\ell$: the \textbf{Shi arrangement} $ \Shi(\ell) $ due to Shi \cite[Chapter 7]{Shi86},  and the \textbf{Ish arrangement} $ \Ish(\ell) $ due to Armstrong \cite{Arms13},
\begin{align*}
\Shi(\ell) &\coloneqq \Cox(\ell) \cup \Set{ x_{i}-x_{j}=1 | 1 \leq i < j \leq \ell}, \\
\Ish(\ell) &\coloneqq \Cox(\ell) \cup \Set{ x_{1}-x_{j} = i | 1 \leq i < j \leq \ell},
\end{align*}
where $\Cox(\ell) \coloneqq \Set{ x_{i}-x_{j}=0 | 1 \leq i < j \leq \ell} $ is the  \textbf{Coxeter arrangement of type $A$} (also known as the \textbf{braid arrangement}).

In general, the Shi arrangement and the Ish arrangement are combinatorially different in the sense that $ L(\Shi(\ell)) $ and $ L(\Ish(\ell)) $ are not isomorphic as posets. 
However,  these two have a number of interesting similarities. 
Some important properties are summarized below. 
\begin{theorem}[e.g., \cite{Headley97, A96, Arms13, AR12}]
The Shi arrangement and the Ish arrangement have the same characteristic polynomial:
\begin{align*}
\chi_{\Shi(\ell)}(t) = \chi_{\Ish(\ell)}(t) = t(t-\ell)^{\ell-1}. 
\end{align*}
\end{theorem}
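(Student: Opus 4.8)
The plan is to compute both characteristic polynomials by the \emph{finite field method} (Athanasiadis, going back to Crapo--Rota and Terao). Since every hyperplane of $\Shi(\ell)$ and $\Ish(\ell)$ is the kernel of an affine form with integer coefficients whose linear part is a root of type $A$, for all sufficiently large primes $q$ the reduction modulo $q$ has the same intersection combinatorics, and
\[
\chi_{\A}(q) = \#\bigl(\mathbb{F}_q^{\ell} \setminus \textstyle\bigcup_{H \in \A} \bar H\bigr),
\]
where $\bar H$ denotes the reduction of $H$. Because $\chi_{\A}(t)$ is a polynomial, it suffices to evaluate this count for infinitely many primes $q$ and recognize it as a polynomial in $q$. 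Thus the theorem reduces to showing that both complements have exactly $q(q-\ell)^{\ell-1}$ points.

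For $\Ish(\ell)$, a complement point is a tuple $(a_1, \dots, a_\ell) \in \mathbb{F}_q^\ell$ with $a_i \ne a_j$ for all $i<j$ and $a_1 - a_j \ne i$ for all $1 \le i < j \le \ell$. I would substitute $b_j \coloneqq a_1 - a_j$ for $2 \le j \le \ell$, leaving $a_1$ free ($q$ choices); the conditions become that the $b_j$ are pairwise distinct and $b_j \notin \{0, 1, \dots, j-1\}$. The key observation is that these forbidden initial segments are \emph{nested}. Hence, choosing coordinates in the order $b_\ell, b_{\ell-1}, \dots, b_2$, any previously chosen $b_{j'}$ (with $j' > j$) already lies outside $\{0, \dots, j-1\} \subseteq \{0, \dots, j'-1\}$, so the set forbidden to $b_j$ is the disjoint union of $\{0, \dots, j-1\}$ and $\{b_{j+1}, \dots, b_\ell\}$, of size exactly $j + (\ell - j) = \ell$. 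Each of the $\ell-1$ coordinates therefore has precisely $q - \ell$ admissible values, giving $q(q-\ell)^{\ell-1}$.

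For $\Shi(\ell)$ the complement consists of distinct tuples with $a_i - a_j \ne 1$ for $i<j$, i.e.\ $a_i \ne a_j + 1$. Here the naive ordering fails: the forbidden sets overlap in a way that depends on the chosen values, so no constant count per step is available. Instead I would identify $\mathbb{F}_q$ with the cycle $\mathbb{Z}/q\mathbb{Z}$ and reinterpret a complement point as an injective placement of the labels $1, \dots, \ell$ on the cycle subject to the rule that whenever two cyclically consecutive sites are both occupied the label increases from the lower site to the higher one; equivalently, the labels strictly increase along every maximal run of consecutive occupied sites. Counting such placements---summing the multinomial $\ell!/\prod_k r_k!$ over run-length patterns, or via a transfer/bijective argument---again yields $q(q-\ell)^{\ell-1}$.

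I expect this last count to be the main obstacle: unlike the Ish side, the Shi conditions carry no monotone structure to exploit termwise, so the cyclic reformulation and the ensuing enumeration over run patterns bear the real combinatorial weight. Once both counts are in hand, the two characteristic polynomials agree with $t(t-\ell)^{\ell-1}$ at infinitely many integers and hence coincide identically, proving the theorem.
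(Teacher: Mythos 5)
Your proposal is correct in outline, but it takes a genuinely different route from the paper. The paper quotes this result from the literature and then effectively reproves it as the special cases $k=2$ and $k=\ell$ of Theorem \ref{Duarte--Guedes-de-Oliveira}: Theorem \ref{thm:Akl} realizes the whole chain $\Shi(\ell)=\mathcal{A}_{\ell}^{2},\dots,\mathcal{A}_{\ell}^{\ell}=\Ish(\ell)$ as successive coking eliminations, each step preserving the characteristic polynomial via an explicit finite-field bijection between the symmetric differences $M\setminus M'$ and $M'\setminus M$ of the two complements (Theorem \ref{coincidence characteristic polynomials}), and the polynomial is then evaluated only at the Ish end, where isolated simplicial vertices and modular coatoms peel off the linear factors one at a time. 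You instead count both complements over $\mathbb{F}_q$ from scratch. Your Ish count is complete and correct: after substituting $b_j=a_1-a_j$ the forbidden sets $\{0,\dots,j-1\}$ are nested, so processing $b_\ell,\dots,b_2$ leaves exactly $q-\ell$ admissible values at every step. Your Shi count is only sketched, and as you say it carries the real weight; the reformulation (labels strictly increase along maximal runs of occupied sites on $\mathbb{Z}/q\mathbb{Z}$) is right, and the standard way to finish (Athanasiadis) is cleaner than summing multinomials over run patterns: such a configuration is equivalent to assigning each of the $\ell$ labels to one of the $q-\ell$ unoccupied sites (the one it cyclically follows), giving $(q-\ell)^{\ell}$ choices once a gap is anchored, and embedding the resulting necklace of $q$ sites into $\mathbb{Z}_q$ contributes a factor $q/(q-\ell)$, for a total of $q(q-\ell)^{\ell-1}$. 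What your approach buys is directness and independence of the two computations; what the paper's approach buys is a uniform treatment of all the intermediate arrangements $\mathcal{A}_{\ell}^{k}$ and an operation (CEO/KEO) that also transports freeness, which a raw point count cannot see.
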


If the integer $d$ appears $e\ge0$ times in a multiset $M$, we write $d^e \in M$.
\begin{theorem}[\cite{Atha98, AST17,Yo04}]
\label{thm:SI-free}
The cones $ \mathbf{c}\Shi(\ell) $ and $ \mathbf{c}\Ish(\ell) $ are free with exponents $\{0^1, 1^1, \ell^{\ell-1}\}$. 
However,  $ \mathbf{c}\Ish(\ell) $ is supersolvable while $ \mathbf{c}\Shi(\ell) $ is not supersolvable if $\ell \ge 3$.
\end{theorem}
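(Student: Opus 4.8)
The plan is to separate the two assertions and use the recorded characteristic polynomial $t(t-\ell)^{\ell-1}$ as a bookkeeping device. After coning it becomes
$$\chi_{\mathbf{c}\Shi(\ell)}(t)=\chi_{\mathbf{c}\Ish(\ell)}(t)=(t-1)\,t\,(t-\ell)^{\ell-1},$$
so by the Factorization Theorem (Theorem~\ref{thm:Factorization}) it suffices to prove each cone is free, after which its exponents are forced to be the multiset of roots $\{0^1,1^1,\ell^{\ell-1}\}$. The remaining, genuinely different, tasks are then: establish freeness of both cones, prove $\mathbf{c}\Ish(\ell)$ is supersolvable, and prove $\mathbf{c}\Shi(\ell)$ is not supersolvable for $\ell\ge3$.

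For $\mathbf{c}\Ish(\ell)$ I would prove supersolvability directly; since supersolvable arrangements are free (Jambu--Terao), with exponents read off from any modular maximal chain, this simultaneously yields freeness and the exponents $\{0^1,1^1,\ell^{\ell-1}\}$ for the Ish cone. The feature to exploit is that every non-Coxeter hyperplane involves the distinguished coordinate $x_1$, and that for each fixed $j$ the hyperplanes $x_1-x_j=iz$ $(0\le i\le j-1)$ together with $H_\infty$ form a pencil through a common codimension-two flat. The task is to assemble these pencil centers into a modular maximal chain and verify modularity rank by rank, inducting on $\ell$. I expect this to be the main obstacle: the case $\ell=3$ already reveals the subtlety, since the naive flat $\{z=0,\ x_1=x_2\}$ obtained by deleting the last coordinate is \emph{not} modular, whereas the pencil center $\{z=0,\ x_1=x_3\}$ is; thus the correct chain must genuinely encode the staircase weights rather than mere coordinate deletion, and checking modularity at every rank is the crux.

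Because $\mathbf{c}\Shi(\ell)$ is not supersolvable, I would prove its freeness by a different route, via the Ziegler restriction to $H_\infty$. Each Coxeter direction $x_i-x_j$ underlies exactly two hyperplanes of $\mathbf{c}\Shi(\ell)$, namely $x_i-x_j=0$ and $x_i-x_j=z$, both of which restrict to the single braid hyperplane $x_i-x_j=0$ on $H_\infty$; hence the Ziegler restriction is $\Cox(\ell)$ with constant multiplicity $2$. By Terao's theorem on Coxeter multiarrangements this multiarrangement is free, with exponents $\{0^1,\ell^{\ell-1}\}$. Yoshinaga's freeness criterion then lifts this to freeness of $\mathbf{c}\Shi(\ell)$, the extra $H_\infty$-direction contributing the exponent $1$ so that $\exp(\mathbf{c}\Shi(\ell))=\{1\}\cup\{0^1,\ell^{\ell-1}\}=\{0^1,1^1,\ell^{\ell-1}\}$; the numerical hypothesis of the criterion (a matching of second coefficients) is guaranteed by the characteristic polynomial above. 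A fully elementary alternative is to prove inductive freeness through Addition--Deletion along a suitable ordering of the hyperplanes.

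Finally, to show $\mathbf{c}\Shi(\ell)$ is not supersolvable for $\ell\ge3$, I would localize at the rank-three flat $X_0=\{z=0,\ x_1=x_2=x_3\}$. The only hyperplanes containing $X_0$ are the seven built from $x_1,x_2,x_3,z$, so the localization is isomorphic to $\mathbf{c}\Shi(3)$; as a localization of a supersolvable arrangement is again supersolvable, it suffices to refute supersolvability of $\mathbf{c}\Shi(3)$. In the rank-three projective picture $\mathbf{c}\Shi(3)$ is an arrangement of seven lines, for which supersolvability is equivalent to the existence of a modular point, i.e.\ an intersection point $P$ such that the line joining $P$ to every other intersection point belongs to the arrangement. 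Enumerating the nine intersection points and testing each shows that none is modular; for instance the Coxeter triple point $\{x_1=x_2=x_3\}$ is joined to the double point $\{x_1-x_2=z,\ x_2-x_3=z\}$ only by the line $x_1-2x_2+x_3=0$, which is not a hyperplane of $\mathbf{c}\Shi(3)$. Hence $\mathbf{c}\Shi(3)$, and therefore $\mathbf{c}\Shi(\ell)$ for $\ell\ge3$, is not supersolvable.
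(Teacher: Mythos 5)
Your proposal is essentially correct, but note that the paper itself does not prove this statement: Theorem \ref{thm:SI-free} is quoted from \cite{Atha98,AST17,Yo04}, so what you have written is a reconstruction of the cited proofs rather than a comparison target inside the paper. Your treatment of $\mathbf{c}\Shi(\ell)$ is precisely Yoshinaga's argument \cite{Yo04}: the Ziegler restriction to $H_{\infty}$ is $\Cox(\ell)$ with constant multiplicity $2$, free with exponents $\{0,\ell^{\ell-1}\}$ by Terao's theorem on Coxeter multiarrangements, and the criterion of Theorem \ref{thm:Yoshinaga's criterion} lifts this to the cone (in the form stated in the paper one checks local freeness in codimension three along $H_{\infty}$ instead of the $b_2$-matching; for $\Shi(\ell)$ every rank-three localization along $H_{\infty}$ has underlying digraph an induced subtournament of $T_{\ell}$ and is free, so either version closes the argument). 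Your non-supersolvability argument is also sound and, after the coordinate change of Proposition \ref{prop:Akl modified}, coincides with the localization the paper uses in the proof of Theorem \ref{main1}: the flat $\{z=0,\ x_1=x_2=x_3\}$ yields the essentialization of $\mathbf{c}\Shi(3)\aff\mathbf{c}\mathcal{A}(T_2,[-1,0])$, which is free but not supersolvable (Proposition \ref{prop:T-psi1=2} with $m=2$; a shortcut to your nine-point check is that by Theorem \ref{thm:exp-ss} a modular coatom would have to contain at least four of the seven hyperplanes, while every rank-two flat of $\mathbf{c}\Shi(3)$ has multiplicity $2$ or $3$), and Theorem \ref{thm:localization-ss-free} transfers non-supersolvability to all $\ell\ge 3$.

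The one place you stop short of a proof is the supersolvability of $\mathbf{c}\Ish(\ell)$, which you need both for its own sake and to get freeness and the exponents of the Ish cone. You correctly identify the pencil centers $\{z=0,\ x_1=x_j\}$ and the failure of the naive flats, but these are rank-two flats, whereas an M-chain requires a modular flat at every rank; you explicitly defer the assembly and the rank-by-rank modularity verification. The clean way to finish, which is what \cite{AST17} and the paper's Propositions \ref{prop:simplicial} and \ref{prop:isolated} implement, is to write $\Ish(\ell)\aff\mathcal{A}(\overline{K^{\ast}_{\ell-1}},[-i,0])\times\Phi_1$ and peel off coordinates in order of increasing weight: every vertex is isolated and the weights are nested, so the vertex of minimal weight is simplicial, the corresponding coatom is modular by Proposition \ref{prop:ss}, and induction produces the M-chain; Theorem \ref{thm:exp-ss} then reads off $\exp(\mathbf{c}\Ish(\ell))=\{0^1,1^1,\ell^{\ell-1}\}$. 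With that step supplied your argument is complete.
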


In this paper we will focus on a larger family containing $\Shi(\ell) $ and  $\Ish(\ell) $, the  \textbf{arrangements ``between Shi and Ish"} introduced recently by Duarte and Guedes de Oliveira in their study of Pak-Stanley labeling of the regions of real arrangements \cite{DG18, DG19}. 
Let $2 \leq k \leq \ell$. Define 
$$
\mathcal{A}_{\ell}^{k} \coloneqq \Cox(\ell) 
\cup \Set{ x_{1}-x_{j}=i  | 1 \leq i < j \leq \ell, i < k}
\cup \Set{ x_{i}-x_{j}=1  | k \leq i < j \leq \ell}. 
$$
We call $ \mathcal{A}_{\ell}^{k}$ in this paper the   \textbf{$(k,\ell)$-Shi-Ish arrangement}. 
These arrangements interpolate between $\Shi(\ell) $ and  $\Ish(\ell) $ as $k$ varies, and we can view $\Shi(\ell)= \mathcal{A}_{\ell}^1 = \mathcal{A}_{\ell}^{2} $ and $ \Ish(\ell)= \mathcal{A}_{\ell}^{\ell}  $ as the extreme arrangements in this family. 
The following is a notable feature of the $(k,\ell)$-Shi-Ish arrangements.
\begin{theorem}[\cite{DG18}]\label{Duarte--Guedes-de-Oliveira}
If $2 \leq k \leq \ell$, then the characteristic polynomial of $ \mathcal{A}_{\ell}^{k} $ is 
\begin{align*}
\chi_{\mathcal{A}_{\ell}^{k}}(t) = t(t-\ell)^{\ell-1}. 
\end{align*}
\end{theorem}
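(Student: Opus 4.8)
The plan is to compute $\chi_{\mathcal{A}_{\ell}^{k}}(t)$ directly via the finite field method of Athanasiadis. Since every hyperplane of $\mathcal{A}_{\ell}^{k}$ is cut out by an equation with integer coefficients, for every sufficiently large prime $q$ the value $\chi_{\mathcal{A}_{\ell}^{k}}(q)$ equals the number of points $(a_{1},\dots,a_{\ell})\in\mathbb{F}_{q}^{\ell}$ lying off all the reduced hyperplanes; that is, the number of tuples satisfying simultaneously $a_{i}\neq a_{j}$ for all $i<j$ (from $\Cox(\ell)$), $a_{1}-a_{j}\neq i$ whenever $1\le i<j\le\ell$ and $i<k$ (the Ish-type conditions), and $a_{i}-a_{j}\neq 1$ whenever $k\le i<j\le\ell$ (the Shi-type conditions). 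It then suffices to prove that this count equals $q(q-\ell)^{\ell-1}$ for all large $q$, since a polynomial is determined by its values at infinitely many integers.

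First I would exploit the translation symmetry: all defining forms are differences $x_{a}-x_{b}$ plus a constant, so the solution set is invariant under $(a_{1},\dots,a_{\ell})\mapsto(a_{1}+c,\dots,a_{\ell}+c)$. This contributes the factor $q$ and reduces the problem to showing that, with $a_{1}$ fixed, the number of admissible completions $(a_{2},\dots,a_{\ell})$ is exactly $(q-\ell)^{\ell-1}$, independently of $k$. With $a_{1}$ fixed, each Ish-type condition becomes a forbidden nested interval $a_{j}\notin\{a_{1}-1,\dots,a_{1}-\min(j-1,k-1)\}$, while the Shi-type conditions couple the coordinates $a_{k},\dots,a_{\ell}$ through the exclusions $a_{j}\neq a_{i}-1$ for $k\le i<j$. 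I would then build up the tuple by inserting the coordinates in a fixed order and count, at each step, the number of values avoiding the already-chosen coordinates together with the forbidden interval and Shi-exclusions relevant to that coordinate.

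The main obstacle is that these forbidden sets overlap in ways that depend on the previously chosen values, so no single insertion step has a constant number of choices; the point is that summing over the possible coincidences produces telescoping cancellations that collapse to the clean product. This is already visible for $\ell=3$: fixing $a_{1}$, the coordinate $a_{2}$ has $q-2$ choices, after which $a_{3}$ has either $q-3$ or $q-4$ choices according to a single possible coincidence among its forbidden values, and the two cases combine as $1\cdot(q-3)+(q-3)(q-4)=(q-3)^{2}$; the same collapse occurs whether one starts from $\Shi(3)$ or $\Ish(3)$. Carrying out this bookkeeping in general — organizing the completions so that the overlaps between the nested Ish intervals and the Shi exclusions telescope to $(q-\ell)^{\ell-1}$ for every $k$ — is the technical heart of the argument, and establishing the $k$-independence is precisely the delicate point.

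An alternative I would keep in reserve is to induct on $k$, anchored at the classically known endpoints $\Shi(\ell)=\mathcal{A}_{\ell}^{2}$ and $\Ish(\ell)=\mathcal{A}_{\ell}^{\ell}$. Passing from $\mathcal{A}_{\ell}^{k}$ to $\mathcal{A}_{\ell}^{k+1}$ only swaps the $\ell-k$ Shi-type hyperplanes $\{x_{k}-x_{j}=1 : k<j\le\ell\}$ for the $\ell-k$ Ish-type hyperplanes $\{x_{1}-x_{j}=k : k<j\le\ell\}$, while every other condition — including the constraint $a_{k}\notin\{a_{1}-1,\dots,a_{1}-(k-1)\}$ on the swapped coordinate $a_{k}$ — is left unchanged. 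A value-level bijection realizing this swap on $\mathbb{F}_{q}$-points, or a deletion–restriction chain interpolating the swap one hyperplane at a time, would immediately give $\chi_{\mathcal{A}_{\ell}^{k}}=\chi_{\mathcal{A}_{\ell}^{k+1}}$; constructing such a swap cleanly is the hard part, and is essentially the content captured by the king and coking elimination operations developed later in the paper.
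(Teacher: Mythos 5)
Your proposal correctly identifies the paper's strategy -- the finite field method combined with an interpolation from $\Shi(\ell)$ to $\Ish(\ell)$ one step at a time -- but it stops exactly where the mathematical content begins, so there is a genuine gap. In both of your routes you explicitly defer the key step: in the direct count you acknowledge that ``carrying out this bookkeeping in general \dots is the technical heart of the argument,'' and in the inductive route you acknowledge that ``constructing such a swap cleanly is the hard part.'' Neither the telescoping cancellation nor the value-level bijection is actually produced, and the $\ell=3$ computation (which is correct) does not indicate how to handle the general overlap pattern, where the forbidden sets for $a_j$ involve both the nested Ish interval below $a_1$ and the Shi exclusions $a_j\neq a_i-1$ for several earlier $i$, with coincidences that depend on all previously chosen values. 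A polynomial identity verified at one small case is not a proof.

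For comparison, the paper's proof (Theorem \ref{thm:Akl}, via Proposition \ref{prop:Akl modified} and Theorem \ref{coincidence characteristic polynomials}) supplies precisely the missing bijection. After a change of coordinates putting $\mathcal{A}_{\ell+1}^{k+1}$ in the form $\mathcal{A}(T_{\ell}^{k},\psi_{\ell}^{k})\times\Phi_1$, the passage from $k$ to $k+1$ becomes a coking elimination: delete all edges into the coking $v$ and extend each other weight interval by one unit on the left. The proof of Theorem \ref{coincidence characteristic polynomials} then constructs an explicit bijection $f\colon M\setminus M'\to M'\setminus M$ between the two symmetric differences of complements over $\mathbb{F}_p$: one orders $\mathbb{F}_p$ by the representatives $\{0,1,\dots,p-1\}$, shows that any $\boldsymbol{x}\in M\setminus M'$ has some coordinate strictly above $x_v$ in this order, takes the minimal such coordinate $x_{k_0}$, sets $d=x_{k_0}-x_v\succeq 2$ (this is where the coking hypothesis and Condition (\tbf{C}) enter), and translates every coordinate above $x_v$ down by $d-1$; the inverse map translates up by $c-1$ with $c=\min\{a_k-y_k\mid y_k\succ y_v\}$. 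Verifying that this map lands in $M'\setminus M$ requires the interval containments of Condition (\tbf{C}) and is the actual work. If you want to complete your argument you would need to supply this bijection (or an equivalent deletion--restriction computation for the $\ell-k$ swapped hyperplanes), together with the endpoint evaluation $\chi_{\mathcal{A}(T_{\ell}^{\ell},\psi_{\ell}^{\ell})}(t)=(t-(\ell+1))^{\ell}$ that anchors the induction at the Ish end, where every vertex is isolated and the simplicial-vertex factorization of Proposition \ref{prop:simplicial} applies.
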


Thus Theorem \ref{Duarte--Guedes-de-Oliveira} together with Terao Factorization Theorem (Theorem \ref{thm:Factorization}) naturally raises the question of freeness for $\cc \mathcal{A}_{\ell}^{k} $. 
One of the new results derived from the present paper is an affirmative answer to this question with more information on their supersolvability.
\begin{theorem}\label{main1}
If $2 \leq k \leq \ell$, then the cone $\textbf{c} \mathcal{A}_{\ell}^{k} $ is free with exponents $\{0^1, 1^1, \ell^{\ell-1}\}$. 
Moreover, if $\ell \ge 3$ then $\textbf{c} \mathcal{A}_{\ell}^{k} $ is not supersolvable except when $k=\ell$.
\end{theorem}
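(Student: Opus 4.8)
The plan is to realize each $\A_\ell^k$ as the $\psi$-graphical arrangement of a vertex-weighted digraph on $\{1,\dots,\ell\}$, in which the weighted arcs out of vertex $1$ record the Ish-type hyperplanes $x_1-x_j=i$ $(1\le i<j,\ i<k)$, the arcs $i\to j$ for $k\le i<j$ record the Shi-type hyperplanes $x_i-x_j=1$, and the undirected edges record $\Cox(\ell)$. By Theorem \ref{Duarte--Guedes-de-Oliveira} and the coning formula, $\chi_{\cc\A_\ell^k}(t)=(t-1)\,t\,(t-\ell)^{\ell-1}$, so once freeness is proved the Factorization Theorem (Theorem \ref{thm:Factorization}) immediately forces $\exp(\cc\A_\ell^k)=\{0^1,1^1,\ell^{\ell-1}\}$. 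Hence the whole first assertion reduces to establishing freeness.

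For freeness I would induct on $\ell$ using the king and coking elimination operations, whose defining virtue is that---under the stated hypotheses on the weight $\psi$---they preserve freeness (and the characteristic polynomial). Deleting the sink vertex $\ell$, a coking, from the digraph of $\A_\ell^k$ leaves precisely the digraph of $\A_{\ell-1}^{\min(k,\ell-1)}$, again of Shi--Ish type and hence free by the induction hypothesis, the base case being $\ell=2$ where $\A_2^2=\Shi(2)=\Ish(2)$. The point to verify is that $\psi$ lies in the admissible range of the coking-elimination lemma, so that freeness lifts back to $\cc\A_\ell^k$; the extremes $\Shi(\ell)=\A_\ell^2$ and $\Ish(\ell)=\A_\ell^\ell$ of Theorem \ref{thm:SI-free} serve as consistency checks.

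For supersolvability the case $k=\ell$ is Theorem \ref{thm:SI-free}, so it remains to show $\cc\A_\ell^k$ is \emph{not} supersolvable whenever $\ell\ge3$ and $2\le k<\ell$. Since supersolvability passes to every localization $(\cc\A_\ell^k)_X$ (lower intervals of a supersolvable geometric lattice are again supersolvable), it suffices to produce one non-supersolvable localization. I would take the rank-three flat $X=\{x_1=x_k=x_{k+1},\ z=0\}\in L(\cc\A_\ell^k)$; the hyperplanes through $X$ are exactly those supported on $\{x_1,x_k,x_{k+1},z\}$, namely the three Coxeter planes together with $x_1-x_k=i$ and $x_1-x_{k+1}=i$ $(0\le i<k)$ and the lone Shi plane $x_k-x_{k+1}=1$. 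In the coordinates $u=x_1-x_k,\ v=x_k-x_{k+1}$ this localization is the cone over a planar arrangement with exactly three slopes: verticals $u\in\{0,\dots,k-1\}$, horizontals $v\in\{0,1\}$, and diagonals $u+v\in\{0,\dots,k-1\}$.

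The crux is then to check that this localization has no modular coatom, i.e.\ no modular point in the associated projective line arrangement. The three slope-pencils give the natural candidates, and each fails: the vertical point at infinity is defeated by the intersection $\{u+v=0,\ v=1\}$, which sits at $u=-1$; the diagonal point by $\{u=k-1,\ v=1\}$, at $u+v=k$; and (for $k\ge3$) the horizontal point by $\{u=0,\ u+v=2\}$, at $v=2$, while $k=2$ is $\Shi(3)$, already non-supersolvable by Theorem \ref{thm:SI-free}. Every finite triple point is non-modular as well, being joinable to another arrangement point by a line of slope $u-v=\mathrm{const}$ (or an even more exotic slope), which never occurs among the three slopes. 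I expect this supersolvability direction to be the main obstacle: freeness follows almost formally once the weight conditions for coking elimination are verified, whereas ruling out supersolvability requires isolating the localization above and confirming that no candidate modular flat survives.
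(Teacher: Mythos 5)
Your reduction of the exponents to freeness (via Theorem \ref{Duarte--Guedes-de-Oliveira}, the coning formula, and Theorem \ref{thm:Factorization}) is fine, and your non-supersolvability argument is essentially the paper's: the paper also localizes at the rank-three flat corresponding to the vertices $1,k,k+1$ (in its transformed coordinates this is $\cc\mathcal{A}(T_2,[-k,0])$, which Proposition \ref{prop:T-psi1=2} shows is free but not supersolvable since $|\psi(1)|=k+1\ge 3$), and then invokes Theorem \ref{thm:localization-ss-free}. Your direct check that none of the three pencil points nor any finite multiple point is modular is a correct, if slightly more hands-on, substitute for that proposition.

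The freeness half, however, has a genuine gap. The coking elimination operation does \emph{not} delete a vertex: it keeps the vertex set, removes only the arcs \emph{into} the coking $v$, and shifts the weight intervals $[a_i,b_i]$ of the other vertices to $[a_i-1,b_i]$. So Theorem \ref{thm:freeness-stable} never compares $\A(G,\psi)$ with $\A(G\setminus v,\cdot)$ in a smaller dimension, and it cannot justify the step ``$\cc\A_{\ell-1}^{k}$ free $\Rightarrow$ $\cc\A_{\ell}^{k}$ free.'' That step would instead require the flat cut out by the hyperplanes not involving $x_\ell$ to be a modular coatom (Proposition \ref{prop:modular coatom}), and it is not: for $k\ge 3$ take $H=\{x_1-x_\ell=2z\}$ and $H'=\{x_2-x_\ell=0\}$; the only hyperplane containing $H\cap H'$ and not involving $x_\ell$ would be $x_1-x_2=2z$, which is absent from $\cc\A_\ell^k$. (Also, vertex $\ell$ is not a coking of your digraph when $k\ge 3$, since there is no arc $i\to\ell$ for $2\le i\le k-1$.) The paper's induction runs in the other direction: after the coordinate change of Proposition \ref{prop:Akl modified}, which realizes $\A_{\ell+1}^{k+1}$ as $\A(T_\ell^k,\psi_\ell^k)\times\Phi_1$, one peels off the isolated simplicial vertices $\ell-k+2,\dots,\ell$ (Propositions \ref{prop:simplicial} and \ref{prop:isolated}) and applies the CEO at the coking $\ell-k+1$ of the remaining transitive tournament, which transforms $(T_\ell^k,\psi_\ell^k)$ into $(T_\ell^{k+1},\psi_\ell^{k+1})$; Conditions \ref{def:conditions}(\tbf{C}) and \ref{def:conditions}(\tbf{Z}) hold here, so Theorem \ref{thm:freeness-stable} lets freeness descend from the base case $k=\ell$, which is the supersolvable Ish arrangement. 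In short: the induction is a downward induction on $k$ with $\ell$ fixed and base case $\Ish$, not an induction on $\ell$ with base case $\ell=2$; as written, your induction step has no supporting lemma.
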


To unveil the essence of the freeness and (non-)supersolvability of $\textbf{c} \mathcal{A}_{\ell}^{k} $, we will study the problem in greater generality. 
We define a class of arrangements associated to a vertex-weighted digraph (or directed graph), containing $ \Shi(\ell) $, $ \Ish(\ell) $, and their interpolations. 
Furthermore, we introduce two operations on the vertex-weighted digraphs under which the characteristic polynomial and freeness are preserved. 
We then obtain Theorem \ref{main1} by applying a sequence of these operations to $ \Shi(\ell) $. 
More interestingly, all of the $(k,\ell)$-Shi-Ish arrangements $ \mathcal{A}_{\ell}^{k} $ appear as the members in the operation sequence, thus giving a new insight into how these arrangements arise naturally as intermediate arrangements between Shi and Ish.

The remainder of the paper is organized as follows. 
In \S \ref{sec:digraph}, we define the arrangements associated to vertex-weighted digraphs, called the $ \psi $-digraphical arrangements, and the operations producing new arrangements from a given one as mentioned above. 
We aim at preservation of both characteristic polynomial and freeness of arrangements and in the end of this section we specify three conditions on the weight $\psi$ that are suitable for our purposes. 
In \S \ref{sec:stability-chi}, we prove the first main result in the paper that under the first two indicated conditions, the operations preserve the  characteristic polynomials of the $ \psi $-digraphical arrangements. 
In particular, it gives a new proof of Theorem \ref{Duarte--Guedes-de-Oliveira}. 
In \S \ref{sec:freeness-stable}, we prove our second main result that the operations preserve the  freeness of the $ \psi $-digraphical arrangements if all three conditions are satisfied. 
The proof of Theorem \ref{main1} will be presented in \S \ref{subsec:freeness-stable} as an application of the second main result.

\section{$ \psi $-digraphical arrangements}
\label{sec:digraph}
\quad
For integers $a\le b$ and $\ell\ge1$, denote $[a,b]\coloneqq \{n\in\Z\mid a \le n\le b\}$ and $[\ell]\coloneqq [1,\ell]$. 
Let $ G = (V_{G}, E_{G}) $ be a digraph on $ V_{G} = [\ell]$. 
A directed edge $(i,j) \in E_{G}$ is considered to be \tbf{directed from $i$ to $j$} ($i \longrightarrow j$). 
Let $ \psi \colon [\ell] \to 2^{\mathbb{Z}} $ be a map, called \tbf{(integral) weight} map, defined by $ \psi(i) = [a_{i}, b_{i}]\subseteq 2^{\mathbb{Z}}$, where $a_{i}\le b_{i}$ are integers for every $ i \in [\ell] $. 
We call the pair $(G,\psi)$ a \textbf{vertex-weighted digraph}.

\begin{definition}
Let $(G,\psi)$ be a vertex-weighted digraph.
Define the \textbf{$ \psi $-digraphical arrangement} $ \mathcal{A}(G,\psi) $ in $ \mathbb{R}^{\ell} $ by 
$$
\mathcal{A}(G,\psi) \coloneqq 
\Cox(\ell) 
\cup \Set{x_{i}-x_{j}=1  | (i,j) \in E_{G}}
\cup \Set{ x_{i}=c  | c \in \psi(i), 1 \leq i   \leq \ell}. 
$$
\end{definition}

We sometimes use the notation $ \mathcal{A}(G,\psi(i)) $ for $ \mathcal{A}(G,\psi) $ when we want to emphasize the precise evaluation  of $\psi$ at $ i \in [\ell] $. 
In particular, if $\psi$ is a constant map with output $U$, i.e., $ \psi(i) = U\subseteq 2^{\mathbb{Z}}  $ for every $ i \in [\ell] $, we write $ \mathcal{A}(G,U)$. 

Two (central) hyperplane arrangements $\A$ and $\mathcal{B}$ in $\K^\ell$ are said to be \textbf{(linearly) affinely equivalent} if there is an invertible (linear) affine endomorphism $\varphi: \K^\ell \to \K^\ell$ such that $\B=\varphi(\A)=\{\varphi(H)\mid H\in \A\}$. 
In particular, the intersection posets of two affinely equivalent arrangements are isomorphic. 
Also, one can prove that the freeness is preserved under linear equivalence. 
In the rest of the paper, we will often ``identify" affinely equivalent arrangements and when necessary use the notation $\A \aff \mathcal{B}$ for two  affinely equivalent arrangements $\A$ and $\mathcal{B}$. 
Note that for such non-central $\A$ and $\mathcal{B}$, the cones $\cc\A$ and $\cc\mathcal{B}$ are linearly equivalent.

\begin{remark}
Given a simple undirected graph $ G = ( [\ell], E_{G}) $ and  a weight map $ \psi \colon [\ell] \to 2^{\mathbb{Z}} $, the \textbf{$ \psi $-graphical arrangement} $\mathcal{A}_{G,\psi}  $  in $ \mathbb{R}^{\ell} $ was defined by Stanley \cite{St15} as follows:
$$
\mathcal{A}_{G,\psi} \coloneqq 
  \Set{x_{i}-x_{j}=0 | \{i,j\} \in E_{G}}
\cup \Set{x_{i}=c  | c \in \psi(i), 1 \leq i   \leq \ell}. 
$$

Our arrangement $ \mathcal{A}(G,\psi) $ can be regarded as a digraph analogue of Stanley $ \psi $-graphical arrangement, and up to affine equivalence neither of these two concepts is a specialization of the other. 
Indeed, a $ \psi $-digraphical arrangement necessarily contains the Coxeter  hyperplanes $\Cox(\ell)$, while a $ \psi $-graphical arrangement does not (e.g., take the empty arrangement).
Freeness and supersolvability (see \S \ref{sec:stability-chi} for definition) of the cones over $ \psi $-graphical arrangements are proved to be equivalent and can be characterized completely in terms of weighted elimination ordering \cite{MS15, ST19}. 
The Shi arrangement is a $ \psi $-digraphical arrangement (see Example \ref{ex:empty} below) whose associated cone is free but not supersolvable \cite{Atha98} (see also Theorems \ref{thm:SI-free} and  \ref{main1}).

We find it highly nontrivial to characterize the freeness or supersolvability of the $ \psi $-digraphical arrangements in full generality but we will show in our second main result (Theorem \ref{thm:freeness-stable}) that there is a certain subclass general enough for our purposes whose freeness can be described. 
\end{remark}

Now we define some simple but important classes of digraphs for our discussion later.
\begin{definition}
The \textbf{transitive tournament} on $ [\ell]$, denoted $ T_{\ell} $, is given by 
\begin{align*}
E_{T_{\ell}} = \Set{(i,j) | 1 \leq i < j \leq \ell}. 
\end{align*}
The \textbf{complete digraph} on $ [\ell]$, denoted $ K^{\ast}_{\ell} $, is given by 
\begin{align*}
E_{K^{\ast}_{\ell}} = \Set{(i,j) | i,j \in [\ell], \ i \neq j}. 
\end{align*}
The \textbf{edgeless digraph} on $ [\ell]$, denoted  $ \overline{K^{\ast}_{\ell}} $, is given by 
$$ E_{\overline{K^{\ast}_{\ell}}} = \varnothing.$$
\end{definition}
See Figure \ref{fig:tce} for depiction of the digraphs above when $\ell=4$. 

\begin{figure}[htbp]
\centering
\begin{subfigure}{.35\textwidth}
  \centering
\begin{tikzpicture}[scale=1]
\draw (0,3) node[v](1){} node[above]{$ \textbf{1}$};
\draw (-1,2) node[v](2){} node[left]{$ \textbf{2}$};
\draw (0,1) node[v](3){} node[below]{$ \textbf{3}$};
\draw (1,2) node[v](4){} node[right]{$ \textbf{4}$};
\draw[>=Stealth,->] (1)--(2);
\draw[>=Stealth,->] (1)--(3);
\draw[>=Stealth,->] (1)--(4);
\draw[>=Stealth,->] (2)--(3);
\draw[>=Stealth,->] (2)--(4);
\draw[>=Stealth,->] (3)--(4);
\end{tikzpicture}
  \caption*{$T_4$}
  \label{fig:tt}
\end{subfigure}%
\begin{subfigure}{.35\textwidth}
  \centering
\begin{tikzpicture}[scale=1]
\draw (0,3) node[v](1){} node[above]{$ \textbf{1}$};
\draw (-1,2) node[v](2){} node[left]{$ \textbf{2}$};
\draw (0,1) node[v](3){} node[below]{$ \textbf{3}$};
\draw (1,2) node[v](4){} node[right]{$ \textbf{4}$};
\draw[>=Stealth,->, bend right = 10] (1) to (2);
\draw[>=Stealth,->, bend right = 10] (1) to (3);
\draw[>=Stealth,->, bend right = 10] (1) to (4);
\draw[>=Stealth,->, bend right = 10] (2) to (1);
\draw[>=Stealth,->, bend right = 10] (2) to (3);
\draw[>=Stealth,->, bend right = 10] (2) to (4);
\draw[>=Stealth,->, bend right = 10] (3) to (1);
\draw[>=Stealth,->, bend right = 10] (3) to (2);
\draw[>=Stealth,->, bend right = 10] (3) to (4);
\draw[>=Stealth,->, bend right = 10] (4) to (1);
\draw[>=Stealth,->, bend right = 10] (4) to (2);
\draw[>=Stealth,->, bend right = 10] (4) to (3);
\end{tikzpicture}
  \caption*{$K^{\ast}_{4} $}
  \label{fig:cd}
\end{subfigure}%
\begin{subfigure}{.35\textwidth}
  \centering
\begin{tikzpicture}[scale=1]
\draw (0,3) node[v](1){} node[above]{$ \textbf{1}$};
\draw (-1,2) node[v](2){} node[left]{$ \textbf{2}$};
\draw (0,1) node[v](3){} node[below]{$ \textbf{3}$};
\draw (1,2) node[v](4){} node[right]{$ \textbf{4}$};
\end{tikzpicture}
  \caption*{$  \overline{K^{\ast}_{4}} $}
  \label{fig:ed}
\end{subfigure}
\caption{From left to right: the  transitive tournament, the complete digraph, and the edgeless digraph  on $4$ vertices.}
\label{fig:tce}
\end{figure}
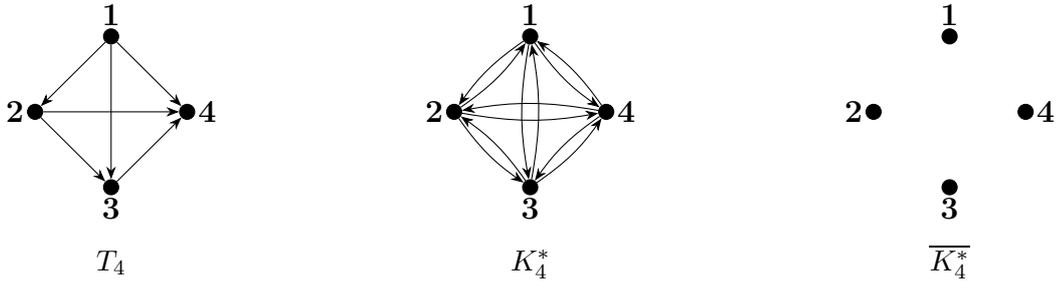

The following are some simple specializations of  the arrangement $ \mathcal{A}(G,\psi) $.

\begin{example}
\label{ex:empty}
Clearly, $ \mathcal{A}(\overline{K^{\ast}_{\ell}}, \varnothing) =\Cox(\ell)$ and $ \mathcal{A}(T_{\ell}, \varnothing) = \Shi(\ell) $. Recall that the \textbf{Catalan arrangement} is defined by 
$$\Cat(\ell) \coloneqq \Cox(\ell) \cup \Set{x_{i}-x_{j}=1, -1  | 1 \leq i < j \leq \ell}.$$
Thus, $ \mathcal{A}(K^{\ast}_{\ell},\varnothing) =\Cat(\ell)$.
\end{example}

\begin{remark}
\label{rem:cat}
$\cc\Cat(\ell)$ is known to be free with exponents $\{0,1, \ell+1, \ell+2,\ldots,2\ell-1\}$, e.g., \cite{ER96, Yo04}. 
As $G$ varies between $\overline{K^{\ast}_{\ell}}$ and $K^{\ast}_{\ell}$, the arrangements $ \mathcal{A}(G, \varnothing) $ can be considered as  intermediate arrangements interpolating between $\Cox(\ell)$ and  $\Cat(\ell)$. 
A characterization for freeness of $ \mathcal{A}(G, \varnothing) $ was conjectured by Athanasiadis \cite[Conjecture 6.6]{Ath00} and it was completely settled in the works of Nuida, Numata, and the first author \cite[Theorem 5.3]{ANN09}, \cite[Corollary 1.1]{Abe12}. 
\end{remark}

 Let $\A_1$ and $\A_2$ be arrangements in $\K^m$ and $\K^n$, respectively. 
The  \textbf{product} arrangement $\A_1\times \A_2$ is an arrangement in  $\K^m\oplus\K^n \simeq\K^{m+n}$ defined by
\begin{align*}
\A_1\times \A_2 \coloneqq \{ H_1 \oplus \K^n \mid H_1 \in \A_1\} \cup  \{ \K^m \oplus H_2 \mid H_2 \in \A_2\}.
\end{align*}

Let $ \Phi_{\ell} $ denote the $ \ell $-dimensional \textbf{empty arrangement}, that is, the arrangement in $ \mathbb{K}^{\ell} $ consisting of no hyperplanes. 

Now we give some less trivial specializations (up to affine equivalence) of our arrangement $ \mathcal{A}(G,\psi) $. 
Let $ G = ( [\ell], E_{G}) $ be a digraph and let ${\psi} \colon [\ell] \to 2^{\mathbb{Z}}, i \mapsto [a_i,b_i] $ be a weight map on $[\ell]$.
Define an arrangement $\widetilde{\A}(G, \psi)$ in $ \mathbb{R}^{\ell+1} $ with an additional coordinate $ x_{0} $ by
\begin{align*}
\widetilde{\A}(G,  \psi) \coloneqq  \Cox(\ell) &\cup \Set{x_{i}-x_{j}=1 | (i,j) \in E_{G}, 1 \leq i < j \leq \ell } \\
& \cup \Set{  x_{0} -x_{i}=c  | c \in  \psi(i), 1 \leq i   \leq \ell }. 
\end{align*}
Thus  via the coordinate change $ x_{0} \mapsto x_{0} $, $ x_{i}-x_{0} \mapsto x_{i} $ $ (1 \leq i \leq \ell )$,
\begin{align*}
\widetilde{\A}(G,  \psi) \aff \mathcal{A}(G ,-\psi)\times \Phi_1, \text{ where } (-\psi)(i)\coloneqq [-b_{i}, -a_{i}] \text{ for } i \in [\ell]. 
\end{align*}

\begin{example}
The Ish arrangement $ \Ish(\ell +1) $ consists of the following hyperplanes: 
\begin{align*}
x_{i} - x_{j} &= 0 \quad (1 \leq i < j \leq \ell + 1), \\
x_{1} - x_{j} &= i \quad (1 \leq i < j \leq \ell + 1). 
\end{align*}
Changing coordinates $ \{x_{1}, \dots, x_{\ell + 1}\} $ to $ \{x_{0}, \dots, x_{\ell} \} $ with $ x_{i} \mapsto x_{i-1} $, we obtain 
\begin{align*}
x_{i} - x_{j} &= 0 \quad (1 \leq i < j \leq \ell), \\
x_{0} - x_{i} &= c \quad (c \in [0,i]). 
\end{align*}
Therefore $ \Ish(\ell + 1) = \widetilde{\mathcal{A}}(\overline{K^{\ast}_{\ell}}, [0,i]) \aff \mathcal{A}(\overline{K^{\ast}_{\ell}}, [-i,0]) \times \Phi_{1} $. 
\end{example}

\begin{example}
\label{ex:aff}
For arbitrary weight map ${\psi} \colon [\ell] \to 2^{\mathbb{Z}} $, the arrangement $\widetilde{\A}(\overline{K^{\ast}_{\ell+1}},  \psi)$ is essentially the \textbf{$N$-Ish arrangement} \cite[Definition 1.2]{AST17} (when defined over $\R$) due to Suyama, the first author and the third author. 
Thus, every $ N $-Ish arrangement can be represented as a $ \psi $-digraphical arrangement. 
Note that every $N$-Ish arrangement is also affinely equivalent to a Stanley $\psi$-graphical arrangement.
\end{example}

\begin{example}
$ \Cox(\ell + 1) = \widetilde{\A}(\overline{K^{\ast}_{\ell}},\{0\}) \aff \mathcal{A}(\overline{K^{\ast}_{\ell}}, \{0\}) \times \Phi_{1} $. 
\end{example}

\begin{example}
$ \Shi(\ell + 1) = \widetilde{\A}(T_{\ell},[0,1]) \aff \mathcal{A}(T_{\ell}, [-1,0]) \times \Phi_{1} $. 
\end{example}

\begin{example}
$ \Cat(\ell + 1) = \widetilde{\A}(K^{\ast}_{\ell},[-1,1]) \aff \mathcal{A}(K^{\ast}_{\ell}, [-1,1]) \times \Phi_{1} $. 
\end{example}

In a similar way, the $ (k, \ell) $-Shi-Ish arrangement $ \mathcal{A}_{\ell}^{k} $ can be represented as a $ \psi $-digraphical arrangement. 
However, for convenience of later discussion, we give the following expression of $ \mathcal{A}_{\ell}^{k} $. 

\begin{proposition}
\label{prop:Akl modified}
Let $ 1 \leq k \leq \ell $ and 
let $ T_{\ell}^{k} $ be a digraph on $ [\ell] $ with edge set
\begin{align*}
E_{T_{\ell}^{k}} \coloneqq \Set{(i,j) | 1 \leq i < j \leq \ell - k + 1}. 
\end{align*}
Define $ \psi_{\ell}^{k} \colon [\ell] \to 2^{\mathbb{Z}} $ by $ \psi_{\ell}^{k}(i) \coloneqq [-\min\{\ell-i+1, k\}, 0] $. 
Then 
\begin{align*}
\mathcal{A}_{\ell+1}^{k+1} \aff \mathcal{A}(T_{\ell}^{k}, \psi_{\ell}^{k}) \times \Phi_{1}. 
\end{align*}
\end{proposition}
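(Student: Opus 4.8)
The plan is to unwind both sides into explicit lists of hyperplanes and produce an affine coordinate change matching them one-to-one. First I would write out $\mathcal{A}_{\ell+1}^{k+1}$ directly from its definition, so it consists of the Coxeter hyperplanes $x_i-x_j=0$ for $1\le i<j\le \ell+1$, the hyperplanes $x_1-x_j=i$ for $1\le i<j\le\ell+1$ with $i<k+1$ (i.e. $1\le i\le k$), and the hyperplanes $x_i-x_j=1$ for $k+1\le i<j\le\ell+1$. Following the pattern established in the preceding Ish and Shi examples, I would relabel the variables $\{x_1,\dots,x_{\ell+1}\}$ to $\{x_0,\dots,x_\ell\}$ via $x_i\mapsto x_{i-1}$, turning the distinguished index $1$ into the new coordinate $x_0$ and leaving an $\ell$-vertex digraph structure on $\{1,\dots,\ell\}$ with the extra coordinate $x_0$ playing the special role, exactly as in the $\widetilde{\A}$ construction.

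Next I would sort the relabeled hyperplanes into the three families defining $\widetilde{\A}(T_\ell^k,\psi)$ for an appropriate $\psi$. The Coxeter part $x_i-x_j=0$ survives as $\Cox(\ell)$ on $\{1,\dots,\ell\}$ together with the hyperplanes $x_0-x_i=0$; the $x_1-x_j=i$ family becomes $x_0-x_{j-1}=i$, i.e. hyperplanes of the form $x_0-x_m=i$ with $1\le i\le k$; and the $x_i-x_j=1$ family with $k+1\le i<j$ becomes $x_m-x_n=1$ with edges precisely among the \emph{later} vertices. The main bookkeeping task is to check that the edge set surviving from the $x_i-x_j=1$ family matches $E_{T_\ell^k}=\{(i,j)\mid 1\le i<j\le \ell-k+1\}$ after relabeling. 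This is where the index shift is subtle: the surviving edges correspond to pairs with both endpoints at least $k+1$ in the old labels, which after the shift $i\mapsto i-1$ and a reflection of the vertex order become the pairs in $[1,\ell-k+1]$; I expect this reindexing to be the one genuinely error-prone step and would verify it carefully on the endpoints.

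It then remains to identify the weight map. Collecting, for each fixed target vertex, the values $c$ for which $x_0-x_m=c$ appears (both the $c=0$ Coxeter-type hyperplane and the $c=1,\dots,k$ hyperplanes, but only up to the range allowed by the constraint $i<j$ in the original $x_1-x_j=i$ family), I would read off an interval of the form $[0,\min\{\ell-i+1,k\}]$ at each vertex after the reflection, matching $\psi_\ell^k(i)=[-\min\{\ell-i+1,k\},0]$ once the sign convention of the $\widetilde{\A}\aff\mathcal{A}(G,-\psi)\times\Phi_1$ passage is applied. Concretely, I would show $\mathcal{A}_{\ell+1}^{k+1}=\widetilde{\A}(T_\ell^k,[0,\min\{\ell-i+1,k\}])$ and then invoke the already-established affine equivalence $\widetilde{\A}(G,\psi)\aff\mathcal{A}(G,-\psi)\times\Phi_1$ with $(-\psi)(i)=[-\min\{\ell-i+1,k\},0]=\psi_\ell^k(i)$ to conclude. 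The only nontrivial point beyond routine index-chasing is confirming that the upper truncation $\min\{\ell-i+1,k\}$ arises correctly from the combined effect of the cap $i\le k$ on the Ish-type slopes and the cap $i<j\le\ell+1$ on which targets each slope can reach; I would handle this by a short case split on whether $\ell-i+1\le k$ or not.
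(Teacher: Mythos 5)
Your overall strategy is exactly the paper's: list the hyperplanes of $\mathcal{A}_{\ell+1}^{k+1}$, shift $x_i\mapsto x_{i-1}$ to isolate a distinguished coordinate $x_0$, read off the weight intervals $[0,\min\{j,k\}]$ from the combined caps $i\le k$ and $i<j$, pass through the equivalence $\widetilde{\A}(G,\psi)\aff\mathcal{A}(G,-\psi)\times\Phi_1$, and finish with a relabelling of the vertices. The weight bookkeeping in your plan is correct.

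The one step that would fail as literally stated is the relabelling. You propose ``a reflection of the vertex order,'' i.e.\ $m\mapsto \ell-m+1$, and you are right that this sends the weight $[0,\min\{m,k\}]$ at vertex $m$ to $[0,\min\{\ell-i+1,k\}]$ at the new vertex $i$. But a directed edge $(i,j)$ encodes the specific hyperplane $x_i-x_j=1$, not $x_j-x_i=1$, and the surviving hyperplanes $x_m-x_n=1$ with $k\le m<n\le\ell$ become, under a global reflection, $x_a-x_b=1$ with $1\le b<a\le\ell-k+1$. That is the \emph{converse} of $T_\ell^k$ on $[1,\ell-k+1]$, not $T_\ell^k$ itself, so your chain of equivalences lands on $\mathcal{A}\bigl((T_\ell^k)^{\mathrm{conv}},\psi_\ell^k\bigr)\times\Phi_1$ rather than the stated arrangement (and the remaining discrepancy cannot be repaired by negating or translating coordinates without disturbing either the weights or the Coxeter hyperplanes). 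The paper's permutation $\sigma$ resolves this by treating the two blocks of vertices differently: it translates the edge-bearing block $[k,\ell]$ onto $[1,\ell-k+1]$ \emph{order-preservingly}, so the edges keep their orientation and match $E_{T_\ell^k}$, while it reflects only the edgeless block $[1,k-1]$ onto $[\ell-k+2,\ell]$, which is what makes the weights come out as $\psi_\ell^k(i)=[-\min\{\ell-i+1,k\},0]$ on that block. You flagged this reindexing as the error-prone step, and indeed the verification you promise would force you to replace the pure reflection by this mixed permutation; with that correction your argument coincides with the paper's proof.
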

\begin{proof}
The arrangement $ \mathcal{A}_{\ell+1}^{k+1} $ consists of the following hyperplanes:
\begin{align*}
x_{i}-x_{j} &= 0 \qquad (1 \leq i < j \leq \ell+1), \\
x_{i}-x_{j} &= 1 \qquad (k+1 \leq i < j \leq \ell+1), \\
x_{1}-x_{j} &= i \qquad (1 \leq i < j \leq \ell+1, i<k+1). 
\end{align*}
Applying the coordinate change $x_{i}\mapsto x_{i-1}$ ($1 \leq i   \leq \ell+1)$  yields 
\begin{align*}
x_{i}-x_{j} &= 0 \quad (1 \leq i < j \leq \ell), \\
x_{i}-x_{j} &= 1 \quad (k \leq i < j \leq \ell), \\
x_{0}-x_{j} &= c \quad (1 \leq   j \leq \ell, c \in [0,\min\{j,k\}] ). 
\end{align*}
Moreover, changing the coordinates by $ x_{0} \mapsto x_{0} $ and $ x_{i}-x_{0} \to x_{i} $ for $ 1 \leq i \leq \ell $, we obtain 
\begin{align*}
x_{i}-x_{j} &= 0 \quad (1 \leq i < j \leq \ell), \\
x_{i}-x_{j} &= 1 \quad (k \leq i < j \leq \ell), \\
x_{i} &= c \quad (1 \leq   i \leq \ell, c \in [-\min\{i,k\}, 0] ). 
\end{align*}
Finally applying $ x_{i} \mapsto x_{\sigma(i)} $, where $ \sigma $ is the permutation on $ [\ell] $ defined by 
\begin{align*}
\sigma \coloneqq \begin{pmatrix}
1 & 2 & \dots & k-1 & k & k+1 & \dots & \ell \\
\ell & \ell -1 & \dots & \ell -k & 1 & 2 & \dots & \ell-k+1
\end{pmatrix}, 
\end{align*}
we have 
\begin{align*}
x_{i}-x_{j} &= 0 \quad (1 \leq i < j \leq \ell), \\
x_{i}-x_{j} &= 1 \quad (1 \leq i < j \leq \ell-k+1), \\
x_{i} &= c \quad (1 \leq   i \leq \ell, c \in [-\min\{\ell-k+1,k\}, 0] ), 
\end{align*}
the desired result. 
\end{proof}

\begin{definition}
A vertex $ v $ in a digraph $ G $ is called a \textbf{king} (resp., \textbf{coking}) if $ (v,u) \in E_{G} $ (resp., $ (u,v) \in E_{G} $) for every $ u \in V_{G}\setminus\{v\} $. 
\end{definition}
\begin{remark}
A $ d $-king in a digraph $G$ is sometimes known as a vertex that can reach every other
vertex in $G$ by a directed path of length at most $d$.
Though a king often refers to a $ 2 $-king, in this paper, a king means a $ 1 $-king.  
\end{remark}

The following definition of digraph operations is crucial.
\begin{definition}\label{deformation coking}
Let $(G,\psi)$ be a vertex-weighted digraph, that is, $ G $ is a digraph on $ [\ell] $ and $ \psi \colon [\ell] \to 2^{\mathbb{Z}} $ is a  map such that $ \psi(i) = [a_{i}, b_{i}]\subseteq 2^{\mathbb{Z}}$ where $a_{i}\le b_{i}$ are integers for every $ i \in [\ell] $. Let $v$ be a vertex in $G$.
\begin{enumerate}[(1)]
\item Suppose that $v$ is a coking in $ G $. 
The \textbf{coking elimination operation (CEO)} on $G$ (w.r.t. $v$) is a construction of a new vertex-weighted digraph $ (G^{\prime}, \psi^{\prime}) $ where $ G^{\prime}  =( [\ell] , E_{G'})$ is a digraph   and $ \psi^{\prime} $ is a weight map given by
\begin{align*}
E_{G^{\prime}} \coloneqq E_{G}\setminus\Set{(i,v) | i \in [\ell]\setminus\{v\}}, \qquad
\psi^{\prime}(i) \coloneqq \begin{cases}
[a_{i}-1, b_{i}] & (i \in [\ell]\setminus\{v\}), \\
[a_{v}, b_{v}] & (i=v). 
\end{cases}
\end{align*}
\item Dually, suppose that  $v$ is a king in $ G $. 
The \textbf{king elimination operation (KEO)} on $G$ (w.r.t. $v$)  produces a new vertex-weighted digraph $ (G'', \psi'') $ given by
\begin{align*}
E_{G''} \coloneqq E_{G}\setminus\Set{(v,i) | i \in [\ell]\setminus\{v\}}, \qquad
\psi''(i) \coloneqq \begin{cases}
[a_{i}, b_{i}+1] & (i \in [\ell]\setminus\{v\}), \\
[a_{v}, b_{v}] & (i=v). 
\end{cases}
\end{align*}
\end{enumerate}
\end{definition}

Hereafter when we say that we apply CEO or KEO on a $ \psi $-digraphical arrangement $ \mathcal{A}(G,\psi) $, we actually mean we apply that operation on the underlying vertex-weighted digraph $(G,\psi)$.

\begin{remark}
\label{rem:interchange}
For a digraph $ G $ on $ [\ell] $, denote by $ G^{\mathrm{conv}} $  the \textbf{converse} of $G$. 
Namely, $ G^{\mathrm{conv}} $ is the digraph on $ [\ell] $ obtained by reversing the direction of each edge of $ G $. 
Thus $ \mathcal{A}(G,\psi) \aff  \mathcal{A}(G^{\mathrm{conv}}, -\psi) $ via $ x_{i} \mapsto -x_i $.
Taking the converse of a digraph interchanges kings and cokings. 
Throughout the paper, our arguments will be stated mostly in terms of cokings, however, one can easily derive the analogous results in terms of kings by the equivalence $ \mathcal{A}(G,\psi) \aff  \mathcal{A}(G^{\mathrm{conv}}, -\psi) $. 
\end{remark}

We close this section by defining some specific conditions on the weight $\psi$ that will be essential for our main results in the next sections.
\begin{definition}\label{def:conditions}
Let $(G,\psi)$ be a vertex-weighted digraph on $ [\ell] $ and let $v$ be a vertex in $G$.
Define the following conditions on $\psi$ (w.r.t. $v$).
\begin{enumerate}
\item[(\tbf{C})]  \label{cond:C} $ 0 \in \psi(i) $ for every $ i \in [\ell] $ and $ [a_{i}, b_{i}-1] \subseteq \psi(v) $ for each $ i \in [\ell]\setminus\{v\} $. 
\item[(\tbf{K})]  $ 0 \in \psi(i) $ for every $ i \in [\ell] $ and $ [a_{i}+1, b_{i}] \subseteq \psi(v) $ for each $ i \in [\ell]\setminus\{v\}$. 
\item[(\tbf{Z})] There exists $ n_{0}\in\Z_{\ge0} $ such that $n_0+2\le |\psi(v)| \le n_0+3$ and $n_0+1\le |\psi(i)| \le n_0+3$ for every $ i \in [\ell]\setminus\{v\}$. 
\end{enumerate}
\end{definition}

Roughly speaking, in view of Remark \ref{rem:interchange}, Conditions \ref{def:conditions}(\tbf{C}) and  \ref{def:conditions}(\tbf{K}) are  dual to each other, while Condition \ref{def:conditions}(\tbf{Z}) is self-dual. 

\section{Stability of characteristic polynomials}
\label{sec:stability-chi}
In this section we prove our first main result that under suitable conditions, the CEO and KEO preserve the  characteristic polynomials of the $ \psi $-digraphical arrangements. 

\begin{theorem}[Stability of characteristic polynomials]
\label{coincidence characteristic polynomials}
Let $(G,\psi)$ be a vertex-weighted digraph on $ [\ell] $ and let $v$ be a vertex in $G$.
\begin{enumerate}[(1)]
\item Assume that $v $ is a coking in $ G $. 
Let $ (G^{\prime}, \psi^{\prime}) $ be the graph obtained from $(G,\psi)$ by applying the CEO w.r.t. the coking $v$ (Definition \ref{deformation coking}). 
If Condition \ref{def:conditions}(\tbf{C}) is satisfied, then $$ \chi_{\mathcal{A}(G,\psi)}(t) = \chi_{\mathcal{A}(G^{\prime},\psi^{\prime})}(t).$$

\item Assume that $ v $ is a king in $ G $. 
Let $ (G'', \psi'') $ be the graph obtained from $(G,\psi)$ by applying the KEO w.r.t. the king $v$ (Definition \ref{deformation coking}). 
If Condition \ref{def:conditions}(\tbf{K})  is satisfied, then $$ \chi_{\mathcal{A}(G,\psi)}(t) = \chi_{\mathcal{A}((G'', \psi'')}(t).$$
\end{enumerate}
 \end{theorem}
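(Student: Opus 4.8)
The plan is to prove part (1) for the coking, since part (2) then follows immediately by the converse duality $\mathcal{A}(G,\psi)\aff\mathcal{A}(G^{\mathrm{conv}},-\psi)$ recorded in Remark \ref{rem:interchange}: taking converses interchanges kings and cokings, sends Condition \ref{def:conditions}(\tbf{K}) to Condition \ref{def:conditions}(\tbf{C}), and sends the KEO to the CEO, while affine equivalence preserves characteristic polynomials. So I will concentrate entirely on the coking case and the single hyperplane that actually changes.

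The natural tool is the \textbf{deletion--restriction} recursion for characteristic polynomials,
$$\chi_{\A}(t)=\chi_{\A\setminus H}(t)-\chi_{\A^{H}}(t),$$
applied to a carefully chosen hyperplane $H$. Observe that $\mathcal{A}(G,\psi)$ and $\mathcal{A}(G',\psi')$ differ in a controlled way: the CEO deletes the edges $(i,v)$, i.e.\ removes the Shi-type hyperplanes $x_i-x_v=1$ for $i\ne v$, and simultaneously enlarges each $\psi(i)$ ($i\ne v$) downward by adjoining the single value $a_i-1$, i.e.\ adds the hyperplanes $x_i=a_i-1$. Thus passing from $\mathcal{A}(G,\psi)$ to $\mathcal{A}(G',\psi')$ is a trade: for each $i\ne v$ one Shi hyperplane $x_i-x_v=1$ is swapped for one coordinate hyperplane $x_i=a_i-1$. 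My plan is to establish this trade \emph{one vertex at a time}, introducing an interpolating family of arrangements in which the edge $(i,v)$ is deleted and the value $a_i-1$ is added to $\psi(i)$ for a single $i$, and to show each such step preserves $\chi$. Fixing one such $i$, I would apply deletion--restriction with $H$ equal to either the deleted hyperplane $x_i-x_v=1$ or the added hyperplane $x_i=a_i-1$; the key identity to prove is that the two restrictions are affinely equivalent, which forces the two deletions to have equal characteristic polynomials and hence the two full arrangements to agree.

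\textbf{Where Condition (\tbf{C}) enters, and the main obstacle.} The crux is analyzing the restriction to the changed hyperplane. On $x_i=a_i-1$ (respectively on $x_i-x_v=1$), several hyperplanes of the ambient arrangement become equal or redundant, and one must match up the restricted arrangement with the restriction in the other arrangement. This is exactly where Condition \ref{def:conditions}(\tbf{C}) is indispensable. The hypothesis $0\in\psi(i)$ for all $i$ guarantees every coordinate hyperplane $x_i=0$ is present, anchoring the restricted coordinate structure; the hypothesis $[a_i,b_i-1]\subseteq\psi(v)$ ensures that after the swap the weight $\psi(v)$ already contains the values needed so that the hyperplane $x_v=c$ and the Shi hyperplane $x_i-x_v=1$ interact on the restriction in a way that mirrors the interaction of $x_i=c$ and $x_i=a_i-1$. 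Concretely, on the restriction one expects the trace of $\{x_i-x_v=1\}\cup\{x_v=c:c\in\psi(v)\}$ to coincide with the trace of $\{x_i=a_i-1\}\cup\{x_i=c:c\in\psi(i)\}$, and the containment in (\tbf{C}) is precisely the combinatorial condition making these two traces identical as arrangements on the hyperplane.

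I expect the \textbf{main obstacle} to be the bookkeeping in the restriction: verifying that no spurious hyperplanes appear or collapse, and that the intersection posets of the two restricted arrangements are genuinely isomorphic rather than merely of equal cardinality. This requires tracking, on the chosen hyperplane, how the Coxeter hyperplanes $x_a-x_b=0$, the Shi hyperplanes, and the coordinate hyperplanes $x_a=c$ simultaneously restrict, and checking that the $0\in\psi(i)$ and $[a_i,b_i-1]\subseteq\psi(v)$ conditions are exactly what is needed to make the restriction on one side match the other. Once this single-vertex, single-hyperplane matching is established and the two restrictions are shown affinely equivalent, the deletion--restriction recursion closes the argument, and iterating over $i\in[\ell]\setminus\{v\}$ upgrades the one-step equality to the full statement $\chi_{\mathcal{A}(G,\psi)}(t)=\chi_{\mathcal{A}(G',\psi')}(t)$.
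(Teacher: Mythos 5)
Your reduction of (2) to (1) via Remark \ref{rem:interchange} is fine, and your identification of the net effect of the CEO (trading the hyperplane $x_i-x_v=1$ for the hyperplane $x_i=a_i-1$, for each $i\neq v$) is accurate. However, the core of your plan --- performing this trade one vertex at a time and showing that each single swap preserves $\chi$ --- fails at its first step. Take $\ell=3$, $v=3$, $E_G=\{(1,3),(2,3)\}$ and $\psi(i)=\{0\}$ for all $i$, so that Condition (\textbf{C}) holds vacuously. Here $\chi_{\mathcal{A}(G,\psi)}(t)=(t-2)(t-3)^2$, but the intermediate arrangement $\mathcal{I}$ obtained by deleting only $x_1-x_3=1$ and adjoining only $x_1=-1$ has $\chi_{\mathcal{I}}(t)=(t-3)(t^2-5t+7)$; a point count over $\mathbb{F}_7$ gives $80$ versus $84$. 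The failure is already visible in the key identity you propose: with $H_1=\{x_1-x_3=1\}\in\mathcal{A}(G,\psi)$ and $H_2=\{x_1=-1\}\in\mathcal{I}$ one indeed has $\mathcal{A}(G,\psi)\setminus\{H_1\}=\mathcal{I}\setminus\{H_2\}$, but the restriction $\mathcal{A}(G,\psi)^{H_1}$ consists of $5$ lines (on $x_1=x_3+1$ the traces of $x_1-x_2=0$ and $x_2-x_3=1$ coincide, and $x_1-x_3=0$ disappears), whereas $\mathcal{I}^{H_2}$ consists of $6$ lines. Since the characteristic polynomial records the number of hyperplanes in its subleading coefficient, the two restrictions cannot be affinely equivalent or even have equal characteristic polynomials, and deletion--restriction then forces $\chi_{\mathcal{A}(G,\psi)}\neq\chi_{\mathcal{I}}$. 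No reordering of the vertices rescues this: in the example the two possible first steps are symmetric.

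The obstruction is that the cancellation behind Theorem \ref{coincidence characteristic polynomials} is genuinely global in $v$: the hyperplanes $x_i-x_v=1$ for the various $i$ interact with one another (and with all of $\psi$) through the Coxeter hyperplanes, so the exchange cannot be localized to a single $i$. This is reflected in the paper's actual proof, which uses the finite field method (Theorem \ref{Crapo--Rota finite field method}) and constructs an explicit bijection between $M\setminus M'$ and $M'\setminus M$ in $\mathbb{F}_p^{\ell}$ by translating, all at once, the entire block of coordinates $x_k\succ x_v$ by the amount $-d+1$ where $d$ is the minimal gap above $x_v$; Condition (\textbf{C}) and the coking hypothesis enter to guarantee $d\succeq 2$ and that the shifted point avoids every hyperplane. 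If you wish to avoid the finite field method you would need a deletion--restriction scheme whose intermediate arrangements are not of the form $\mathcal{A}(G,\psi)$ at all, or an induction on $\ell$ rather than on the edges at $v$; as written, the interpolating family you propose does not have constant characteristic polynomial.
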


The key ingredient is the finite field method.
A hyperplane arrangement is said to be \textbf{integral} if the equations defining the hyperplanes of the arrangement have integer coefficients. 
Let $\A$ be an integral hyperplane arrangement in $\R^\ell$ and let $p$ be a prime number. 
The arrangement $\A$ gives rise to an arrangement $ \mathcal{A}_{p} =\A \otimes \mathbb{F}_{p} $ in $ \mathbb{F}_{p}^{\ell} $ defined by regarding the defining equation of each hyperplane in $\A$ as lying over $\mathbb{F}_{p} $. 
Note that $L(\A_p) \simeq L(\A)$ if $p$ is sufficiently large.

\begin{theorem}[Finite field method, e.g., \cite{CR70,A96,BE97,BS98,KTT08}]
\label{Crapo--Rota finite field method}
Let $\A$ be an integral hyperplane arrangement in $\R^\ell$ and let $p$ be a large enough prime number. Then
$$ \chi_{\mathcal{A}}(p)=
\left| \,\mathbb{F}_{p}^{\ell} \setminus \bigcup_{H \in \mathcal{A}_{p}}H\,\right|.
$$
\end{theorem}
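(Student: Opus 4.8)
The plan is to count the complement $\bigl|\mathbb{F}_p^\ell\setminus\bigcup_{H\in\A_p}H\bigr|$ by a Möbius-inversion argument carried out directly on the intersection poset $L(\A_p)$, and then to transport the answer back to $L(\A)$ using the isomorphism $L(\A_p)\cong L(\A)$ valid for large $p$. First I would record the elementary geometric fact that every flat $X\in L(\A_p)$ is an affine subspace of $\mathbb{F}_p^\ell$ of dimension $\dim X$, and hence contains exactly $p^{\dim X}$ points. This is the only place where the finiteness of the ground field is used quantitatively.

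Next, for each point $a\in\mathbb{F}_p^\ell$ I would define its \emph{support flat} $X_a\coloneqq\bigcap_{H\in\A_p,\,a\in H}H$, with the convention that the empty intersection equals $\mathbb{F}_p^\ell$. This $X_a$ is the unique largest element of $L(\A_p)$ (in the reverse-inclusion order of the excerpt) containing $a$. Setting $f(Y)\coloneqq\bigl|\{a:X_a=Y\}\bigr|$, the key observation is that a point $a$ lies in the subspace $X$ precisely when $X_a\subseteq X$ as subspaces, that is, when $X\le X_a$ in the poset. Partitioning the $p^{\dim X}$ points of $X$ according to their support flat then yields, for every $X\in L(\A_p)$,
\[
p^{\dim X}=\sum_{Y\ge X} f(Y).
\]

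I would then apply Möbius inversion over $L(\A_p)$ to solve for $f$, obtaining $f(X)=\sum_{Y\ge X}\mu(X,Y)\,p^{\dim Y}$. Specializing to the minimum $X=\mathbb{F}_p^\ell$ and recalling $\mu(\mathbb{F}_p^\ell,Y)=\mu(Y)$ gives
\[
f(\mathbb{F}_p^\ell)=\sum_{Y\in L(\A_p)}\mu(Y)\,p^{\dim Y}=\chi_{\A_p}(p).
\]
On the other hand, $f(\mathbb{F}_p^\ell)$ counts exactly those points whose support flat is all of $\mathbb{F}_p^\ell$; since any hyperplane through $a$ would force $X_a$ to be a proper subspace, these are precisely the points lying on no hyperplane, so $f(\mathbb{F}_p^\ell)=\bigl|\mathbb{F}_p^\ell\setminus\bigcup_{H\in\A_p}H\bigr|$. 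Finally, invoking $L(\A_p)\cong L(\A)$ for large $p$ identifies the Möbius values and the dimensions over $\mathbb{F}_p$ with those over $\R$, so $\chi_{\A_p}(p)=\chi_\A(p)$, which completes the proof.

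The main obstacle is the passage $L(\A_p)\cong L(\A)$ that underlies the last step: one must ensure that for large $p$ no subcollection of hyperplanes with empty or lower-dimensional intersection over $\R$ acquires a spurious or higher-dimensional intersection over $\mathbb{F}_p$, and that no rank drops. This is exactly the content of the note preceding the statement, and it reduces to the fact that all the relevant incidences are governed by the vanishing or non-vanishing of finitely many integer minors of the coefficient matrices; taking $p$ larger than every nonzero minor preserves each rank and each (non)emptiness simultaneously, so that both $\mu$ and $\dim$ are computed by the same poset as over $\R$.
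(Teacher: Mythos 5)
The paper does not prove this theorem at all — it is quoted as a known result with citations to Crapo--Rota, Athanasiadis, and others — so there is no internal proof to compare against. Your argument is correct and is precisely the standard proof from those references: Möbius inversion of the identity $p^{\dim X}=\sum_{Y\ge X}f(Y)$ over $L(\A_p)$, where $f(Y)$ counts points with support flat $Y$, combined with the observation that choosing $p$ larger than all nonzero integer minors of the relevant coefficient and augmented matrices forces $L(\A_p)\cong L(\A)$ as ranked posets, so that $\chi_{\A_p}(p)=\chi_{\A}(p)$.
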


We are ready to prove Theorem \ref{coincidence characteristic polynomials}. 

\begin{proof}[\tbf{Proof of Theorem \ref{coincidence characteristic polynomials}}]
In view of Remark \ref{rem:interchange}, it suffices to prove (1).
We use the finite field method. 
Let $ p $ be a prime large enough and let $ \mathcal{A}_{p} $ and $ \mathcal{A}^{\prime}_{p} $ denote the corresponding  arrangements over $ \mathbb{F}_{p} $ of $ \mathcal{A}(G,\psi) $ and $ \mathcal{A}(G^{\prime},\psi^{\prime}) $. 
Let $ M $ and $ M^{\prime} $ denote the complements of these arrangements, that is, 
\begin{align*}
M \coloneqq \mathbb{F}_{p}^{\ell} \setminus \bigcup_{H \in \mathcal{A}_{p}}H
\quad \text{ and } \quad
M^{\prime} \coloneqq \mathbb{F}_{p}^{\ell} \setminus \bigcup_{H \in \mathcal{A}^{\prime}_{p}}H. 
\end{align*}
In order to  prove $ \chi_{\mathcal{A}(G,\psi)}(t) = \chi_{\mathcal{A}(G^{\prime},\psi^{\prime})}(t)$, we will show $ |M| = |M^{\prime}| $. 

 Let $ \boldsymbol{x} = (x_{1}, \dots, x_{\ell})\in \mathbb{F}_{p}^{\ell} $. 
Note that $ x_{i} \neq x_{j} $ for distinct $ i, j $ if $ \boldsymbol{x} \in M $ or $ \boldsymbol{x} \in M^{\prime} $ since both $ \mathcal{A}_{p} $ and $ \mathcal{A}^{\prime}_{p} $ contain the hyperplane $ x_{i}-x_{j} = 0 $. 
The intersection $ M \cap M^{\prime} $ is given by 
\begin{align*}
M \cap M^{\prime} &= \Set{\boldsymbol{x} \in \mathbb{F}_{p}^{\ell} | \boldsymbol{x} \not\in H \text{ for any } H \in \mathcal{A}_{p} \cup \mathcal{A}^{\prime}_{p}} \\
&= \Set{\boldsymbol{x} \in M | x_{i} \neq a_{i}-1 \text{ for any } i \in [\ell]\setminus\{v\} } \\
&= \Set{\boldsymbol{x} \in M^{\prime} | x_{i} - x_{v} \neq 1 \text{ for any } i \in [\ell]\setminus\{v\} }. 
\end{align*}
Therefore the set differences $ M\setminus M^{\prime} $ and $ M^{\prime}\setminus M $ are  given by 
\begin{align*}
M\setminus M^{\prime} &= \Set{\boldsymbol{x} \in M | x_{i} = a_{i} - 1 \text{ for some } i \in [\ell]\setminus\{v\}}, \\
M^{\prime}\setminus M &= \Set{\boldsymbol{y} \in M^{\prime} | y_{i} - y_{v} = 1 \text{ for some } i \in [\ell]\setminus\{v\}}. 
\end{align*}

Now, we will construct a bijection between $M\setminus M^{\prime}$ and $M^{\prime}\setminus M $. 
Define the total order $ \preceq $ on $ \mathbb{F}_{p} $ induced by the usual order on the representatives $ \{0, 1, \dots, p-1 \} $ of $ \mathbb{F}_{p} $. 

Let $ \boldsymbol{x} \in M\setminus M^{\prime} $. 
Since $0\in \psi(i) = [a_{i}, b_{i}] $ and $x_i \notin  [a_{i}, b_{i}]$ for each $ i \in [\ell] $, we have $ 0 \preceq b_{i} \prec x_{i} \preceq a_{i}-1 \preceq p-1 $. 

There exists $n \in [\ell]\setminus\{v\} $ such that $ x_{n} = a_{n}-1 $. 
Suppose $ x_{n} \preceq x_{v} $. 
Thus $ a_{n}-1 = x_{n} \preceq x_{v} \preceq a_{v}-1 \preceq a_{n}-1 $, where the last relation follows from $ [a_{n}, b_{n}-1] \subseteq \psi(v) $. 
Hence $ x_{v} = x_{n} $, which is a contradiction. 
Therefore $ x_{n} \succ x_{v} $ and the set $ \Set{x_{k} | x_{k} \succ x_{v}} $ is non-empty. 

Let $ x_{k_{0}} :=\min \Set{x_{k} | x_{k} \succ x_{v}}$.
Set $ d \coloneqq x_{k_{0}}-x_{v} \succ 0 $. 
Since $v$ is a coking in $ G $, we have $ x_{k_{0}} - x_{v} \neq 1 $. 
Hence $ d \succeq 2 $. 
Define $ f \colon M\setminus M^{\prime} \to M^{\prime}\setminus M $ by $ f(\boldsymbol{x}) =\boldsymbol{y} $ where 
\begin{align*}
y_{i} \coloneqq \begin{cases}
x_{i} & (x_{i} \preceq x_{v}),  \\
x_{i}-d+1 & (x_{i} \succ x_{v}). 
\end{cases}
\end{align*}
Note that $ y_{v} = x_{v} $ while $ y_{i} \succ y_{v} $ if and only if $ x_{i} \succ x_{v} $. 

We will show $ \boldsymbol{y} \in M^{\prime}\setminus M $. 
First suppose $ y_{i} = y_{j} $. 
If $ y_{i} = y_{j} \preceq y_{v} $, then $ x_{i} = y_{i} = y_{j} = x_{j} $, hence $ i = j $. 
If $ y_{i} = y_{j} \succ y_{v} $, then $ x_{i} = y_{i}+d-1 = y_{j}+d-1 = x_{j} $, hence $ i=j $. 
Therefore $ y_{i} \neq y_{j} $ for any distinct $ i, j \in [\ell]$. 

Second, we will show $ b_{i} \prec y_{i} \prec a_{i}-1 $ for each $ i \in [\ell]\setminus\{v\} $. 
Note that $ b_{v} \prec y_{v} = x_{v} \preceq a_{v}-1 $. 
If $ y_{i} \prec y_{v} $, then $ b_{i} \prec x_{i} = y_{i} \prec y_{v} \preceq a_{v}-1 \preceq a_{i}-1 $. 
If $ y_{i} \succ y_{v} $, then $ b_{i} \preceq b_{v}+1 \preceq y_{v} \prec y_{i} = x_{i}-d+1 \preceq x_{i}-1 \prec  a_{i}-1 $. 
Thus $ b_{i} \prec y_{i} \prec a_{i}-1 $ for each $ i \in [\ell]\setminus\{v\} $. 

Next suppose $ (i,j) \in E_{G^{\prime}} $. 
Note that $ j \neq v $ by definition. 
When $ y_{i} \prec y_{j} $, the condition $ y_{i}-y_{j}=1 $ implies $ y_{i}=0 $ and $ y_{j} = p-1 $, which is a contradiction since $ y_{i} \succ b_{i} \succeq 0 $. 
Consider  $ y_{i} \succ y_{j} $. 
If $ y_{i} \succ y_{j} \succ y_{v} $ or $ y_{v} \succeq y_{i} \succ y_{j} $, then $ y_{i}-y_{j} = x_{i}-x_{j} \neq 1 $. 
If $ y_{i} \succ y_{v} \succ y_{j} $, then $ y_{i}-y_{j} \succeq 2 $, hence $ y_{i}-y_{j} \neq 1 $. 

By the discussion above, $ \boldsymbol{y} \in M^{\prime} $. 
Furthermore, since $ y_{k_{0}}-y_{v}=( x_{k_{0}}-d+1)-x_{v}  = 1 $, we have $ \boldsymbol{y} \in M^{\prime}\setminus M $. 

To show that $ f $ is bijective, we will construct the inverse of $ f $. 
Let $ \boldsymbol{y} \in M^{\prime}\setminus M $. 
There exists $ n \in [\ell]\setminus\{v\} $ such that $ y_n-y_{v}=1 $. 
Since $ y_n \neq 0 $, we have $ y_n \succ y_{v} $. 
Hence the set $ \Set{a_k-y_k | y_k \succ y_{v}} $ is non-empty. 
Set $ c \coloneqq \min\Set{a_k-y_k | y_k \succ y_{v}} $. 
Define $ g \colon M^{\prime}\setminus M  \to  M\setminus M^{\prime}$ by $ g(\boldsymbol{y}) =\boldsymbol{x} $ where 
\begin{align*}
x_{i} \coloneqq \begin{cases}
y_{i} & (y_{i} \preceq y_{v}), \\
y_{i}+c-1 & (y_{i} \succ y_{v}). 
\end{cases}
\end{align*}
In a similar way, one can show $ \boldsymbol{x} \in M\setminus M^{\prime} $. 
Moreover, it is easily seen that $f\circ g = g\circ f = \mathrm{id}$. 
Therefore $ f $ is a bijection, hence $ |M\setminus M^{\prime}| = |M^{\prime}\setminus M| $. 
Thus $ |M| = |M^{\prime}| $. 
\end{proof}

Many applications of Theorem \ref{coincidence characteristic polynomials} are related to the concept of supersolvable arrangements  due to Stanley \cite{St72}, which we will recall shortly.
Let $ \mathbb{K} $ be any field. 
Let $\A$ be a central arrangement in $\K^\ell$. 
For each $X \in L(\A)$, we define the \textbf{localization} of $\A$ on $X$  by 
$${\A}_X \coloneqq \{ K \in {\A} \mid X \subseteq K\} \subseteq \A,$$
and define the \textbf{restriction} ${\A}^{X}$ of ${\A}$ to $X$ by 
$${\A}^{X} \coloneqq \{ K \cap X \mid K \in{\A }\setminus {\A}_X\}.$$

An element  $ X \in L(\mathcal{A}) $ is said to be \textbf{modular} if $ X+Y \in L(\mathcal{A}) $ for all $ Y \in L(\mathcal{A}) $. 
A modular element of corank $ 1 $ is called a \textbf{modular coatom}. 
If $ X \in L(\mathcal{A}) $ is a modular coatom, we call the localization $ \mathcal{A}_{X}$ a modular coatom of $\A$ as well. 

 \begin{definition}
 \label{def:supersolvable}
A central arrangement $\A$ of rank $r$ is called \textbf{supersolvable} if there exists a chain of arrangements, called an \textrm{M}-chain,
$$\varnothing  =  \mathcal{A}_{X_0} \subseteq \mathcal{A}_{X_1} \subseteq \dots \subseteq \mathcal{A}_{X_{r}} = \A,$$
in which $ \mathcal{A}_{X_{i}} $ is a modular coatom of $ \mathcal{A}_{X_{i+1}} $ for each $0 \le i \le r-1$. 
\end{definition}

The following result is useful to check whether  an element is a modular coatom. 
\begin{proposition}[{\cite[Theorem 4.3]{BEZ90}}]
 \label{prop:ss}
Let $ \mathcal{A} $ be a central arrangement and let $ X \in L(\mathcal{A}) $. 
Then $ \mathcal{A}_{X} $ is a modular coatom if and only if for any distinct $ H, H^{\prime} \in \mathcal{A}\setminus \mathcal{A}_{X} $, there exists $ H^{\prime\prime} \in \mathcal{A}_{X} $ such that $ H\cap H^{\prime} \subseteq H^{\prime\prime} $. 
\end{proposition}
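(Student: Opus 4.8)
The statement is a characterization of modular coatoms, so throughout I take $X\in L(\A)$ to be a flat of corank $1$ (as is implicit in the word ``coatom''); the content is then to match modularity of $X$ with the pairwise condition. Passing to the quotient by the center $\bigcap_{H\in\A}H$, I may assume $\A$ is essential, so that $X$ is a line through the origin and $\A_X=\{H\in\A\mid X\subseteq H\}$ consists of exactly the hyperplanes through $X$. It is convenient to dualize: writing $\alpha_H\in(\K^\ell)^*$ for a linear form with $H=\ker\alpha_H$, each flat $Y\in L(\A)$ has normal space $N(Y)\coloneqq\operatorname{span}\{\alpha_H\mid H\in\A,\ Y\subseteq H\}$ with $\dim N(Y)=\rank(Y)$, the line $X$ corresponds to a hyperplane $N(X)\subseteq(\K^\ell)^*$, and $H\in\A_X$ exactly when $\alpha_H\in N(X)$. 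Recalling Stanley's definition, $X$ is modular iff $X+Y\in L(\A)$ for every $Y\in L(\A)$; dualizing and using $N(X+Y)=N(X)\cap N(Y)$, this is equivalent to the \emph{span condition}: for every $Y\in L(\A)$,
\[
N(X)\cap N(Y)=\operatorname{span}\{\alpha_H\mid H\in\A_X,\ Y\subseteq H\}.
\]
The inclusion $\supseteq$ is automatic, so modularity is the assertion that $N(X)\cap N(Y)$ is already spanned by the normals of the \emph{internal} hyperplanes (those in $\A_X$) passing through $Y$.

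For the forward direction I would specialize the span condition to a rank-$2$ flat. Given distinct $H,H'\in\A\setminus\A_X$, set $Y=H\cap H'$; since $X$ is a line not contained in $H$, we have $X\cap Y=0$, so $X+Y$ has codimension $1$, i.e.\ is a hyperplane. Modularity gives $X+Y\in L(\A)$, and a codimension-$1$ element of $L(\A)$ is a single hyperplane $H''\in\A$; as $X\subseteq X+Y=H''$ we obtain $H''\in\A_X$ and $H\cap H'=Y\subseteq H''$, which is exactly the stated condition. Dually, the $2$-dimensional space $\operatorname{span}\{\alpha_H,\alpha_{H'}\}$ meets the hyperplane $N(X)$ in a line, and modularity forces this line to contain a realized internal normal $\alpha_{H''}$.

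The reverse direction is the crux. Assuming the pairwise condition, I must establish the span condition for \emph{every} $Y$, not merely corank-$2$ flats. Fix $Y$ with $X\not\subseteq Y$ and abbreviate the normals through $Y$ as $S\coloneqq\{\alpha_H\mid H\in\A,\ Y\subseteq H\}$, split into $S_{\mathrm{in}}=S\cap N(X)$ and $S_{\mathrm{out}}=S\setminus N(X)$. First I note that whenever $H,H'\in\A\setminus\A_X$ both contain $Y$, the hyperplane $H''\in\A_X$ supplied by the hypothesis satisfies $Y\subseteq H\cap H'\subseteq H''$, so $\alpha_{H''}\in S_{\mathrm{in}}$; thus the hypothesis gives, for any two vectors of $S_{\mathrm{out}}$, a nonzero vector of $S_{\mathrm{in}}$ in their span. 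The plan is then to prove $N(X)\cap\operatorname{span}S=\operatorname{span}S_{\mathrm{in}}$ by induction on the number of $S_{\mathrm{out}}$-vectors occurring in an expression of a given $v\in N(X)\cap\operatorname{span}S$: applying the previous relation to two external vectors $\beta_1,\beta_2$, one solves for $\beta_1$ in terms of an internal vector $\alpha''\in S_{\mathrm{in}}$ and $\beta_2$, substitutes into $v$, and thereby rewrites $v-\lambda\alpha''$ (still in $N(X)$) using one fewer external vector, closing the induction. I expect this linear-algebra induction to be the main obstacle: the hypothesis is genuinely a statement about \emph{pairs}, and the work lies in bootstrapping it to control the full intersection $N(X)\cap\operatorname{span}S$. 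Two points make it go through, namely that the interpolating hyperplane $H''$ automatically passes through $Y$ (keeping $\alpha''\in S_{\mathrm{in}}$), and that both coefficients in $\alpha''=s\beta_1+t\beta_2$ are nonzero since $\beta_1,\beta_2\notin N(X)$ while $\alpha''\in N(X)$. To be fully rigorous I would also record that corank $1$ is used in both directions, as it is precisely what makes $X\cap Y=0$ and $\dim N(X)=\ell-1$, so the characterization is indeed one of modular \emph{coatoms}.
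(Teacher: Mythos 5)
The paper offers no proof of this proposition: it is quoted verbatim from \cite[Theorem 4.3]{BEZ90}, so there is nothing internal to compare against. Your argument is correct and self-contained, and it is essentially the standard dual/linear-algebra proof of the BEZ criterion. Your reading of the statement (that $X$ is implicitly a coatom, since otherwise the ``only if'' direction fails, e.g.\ for $X$ the center where the pairwise condition is vacuous) is the right one and matches the source. The forward direction via $X+Y\in L(\A)$ for $Y=H\cap H'$ being a codimension-one flat, hence a hyperplane of $\A$ containing $X$, is clean. In the reverse direction your reduction scheme works, but two small points should be made explicit to close it: first, the induction step only applies when at least two external normals appear, so you must separately rule out the terminal case of exactly one external vector $\beta$ with nonzero coefficient --- there $v-(\text{internal part})$ is a nonzero multiple of $\beta$ lying in $N(X)$, contradicting $\beta\notin N(X)$; second, the flats $Y$ with $X\subseteq Y$ (where $X+Y=Y\in L(\A)$ trivially) should be dispatched before restricting to $X\cap Y=0$. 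Neither is a real gap, and the key observations you isolate --- that the interpolating hyperplane $H''$ automatically contains $Y$ and that both coefficients in $\alpha''=s\beta_1+t\beta_2$ are nonzero --- are exactly what make the bootstrapping from pairs to arbitrary flats go through.
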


 We now recall some basic facts and classical results on supersolvable and free arrangements. 
 
 \begin{theorem}[{\cite[Theorem (4.2)]{JT84}} or {\cite[Theorem 4.58]{OT92}}]
 \label{thm:exp-ss}
If $\A$ is supersolvable, then $\A$ is free. 
Furthermore, if $\A$ has an \textrm{M}-chain $\varnothing  =  \mathcal{A}_{X_0} \subseteq \mathcal{A}_{X_1} \subseteq \dots \subseteq \mathcal{A}_{X_{r(\A)}} = \A,$ then
$\exp(\A) = \{0^{\ell-r(\A)}, d_1, \ldots, d_{r(\A)}\}$ where $d_i = | \mathcal{A}_{X_{i}} \setminus  \mathcal{A}_{X_{i-1}}|$.
 \end{theorem}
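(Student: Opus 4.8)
The plan is to argue by induction on the rank $r=r(\A)$, descending along the M-chain from the top. The base case $r=0$ forces $\A=\varnothing=\Phi_\ell$, which is free with $\exp(\A)=\{0^\ell\}$, matching the claimed formula (with no $d_i$'s). For the inductive step I would set $\B\coloneqq \A_{X_{r-1}}$, the penultimate term of the M-chain. Then $\B$ is central of rank $r-1$ and carries the shorter M-chain $\varnothing=\A_{X_0}\subseteq\dots\subseteq\A_{X_{r-1}}=\B$, so by the induction hypothesis $\B$ is free with $\exp(\B)=\{0^{\ell-r+1},d_1,\dots,d_{r-1}\}$. It then remains to promote freeness of the modular coatom $\B$ to freeness of $\A$ while recording the single new exponent $d_r=|\A\setminus\B|$.

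To carry this out I would add the hyperplanes of $\A\setminus\B=\{H_1,\dots,H_s\}$ (so $s=d_r$) to $\B$ one at a time, writing $\A_j\coloneqq\B\cup\{H_1,\dots,H_j\}$ so that $\A_0=\B$ and $\A_s=\A$, and run a sub-induction on $j$ using Terao's Addition--Deletion theorem applied to the triple $(\A_j,\A_{j-1},\A_j^{H_j})$. The inductive claim is that $\A_j$ is free with $\exp(\A_j)=\{0^{\ell-r},d_1,\dots,d_{r-1},j\}$; this holds for $j=0$ since $\exp(\B)$ has exactly this shape with trailing entry $0$. Granting that $\A_{j-1}$ is free with trailing exponent $j-1$ and that the restriction $\A_j^{H_j}$ is free with exponents $\{0^{\ell-r},d_1,\dots,d_{r-1}\}$, the addition direction of Addition--Deletion produces freeness of $\A_j$ with the new exponent $j$, completing the sub-induction.

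The crux, and the step I expect to be the main obstacle, is the analysis of $\A_j^{H_j}$, which is precisely where the modularity of the coatom enters through Proposition \ref{prop:ss}. For distinct $H_i,H_j\in\A\setminus\B$ the proposition supplies $H''\in\B$ with $H_i\cap H_j\subseteq H''$; since $H_i\cap H_j$ and $H''\cap H_j$ are both hyperplanes of $H_j$ with one containing the other, they coincide. Hence every trace $H_i\cap H_j$ with $i<j$ already equals a trace of some hyperplane of $\B$, so $\A_j^{H_j}=\B^{H_j}$ as arrangements in $H_j$, independently of the previously added hyperplanes. I would then identify $\B^{H_j}$: because $H_j\notin\B$, the center $X_{r-1}$ of $\B$ meets $H_j$ transversally, so up to linear equivalence $\B\cong\B_0\times\Phi_{\ell-r+1}$ with $\B_0$ essential of rank $r-1$, and $\B^{H_j}\cong\B_0\times\Phi_{\ell-r}$. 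Since a product of free arrangements is free with exponents the union of those of the factors, $\B^{H_j}$ is free with $\exp(\B^{H_j})=\{0^{\ell-r},d_1,\dots,d_{r-1}\}$, exactly the input needed above.

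Finally the two nested inductions combine: the sub-induction gives $\A=\A_s$ free with $\exp(\A)=\{0^{\ell-r},d_1,\dots,d_{r-1},s\}=\{0^{\ell-r},d_1,\dots,d_r\}$ because $s=d_r$, and the outer induction on $r$ then delivers the full statement; the first assertion (supersolvable $\Rightarrow$ free) is the special case in which the complete M-chain up to $\A$ exists. The only bookkeeping to monitor is the migration of a single $0$ from $\exp(\B)$ into the new exponent $d_r$, which is forced automatically by the one-dimensional drop in each restriction $\B^{H_j}$.
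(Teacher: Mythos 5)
Your proof is correct; note that the paper does not prove this statement itself but cites it from Jambu--Terao and Orlik--Terao, and your argument is essentially the standard proof found there: induct on the rank along the M-chain and pass from the modular coatom $\mathcal{B}=\mathcal{A}_{X_{r-1}}$ to $\mathcal{A}$ by adjoining the hyperplanes of $\mathcal{A}\setminus\mathcal{B}$ one at a time via Terao's Addition--Deletion theorem, with modularity (Proposition \ref{prop:ss}) forcing each restriction $\mathcal{A}_j^{H_j}$ to coincide with $\mathcal{B}^{H_j}\cong\mathcal{B}_0\times\Phi_{\ell-r}$. The details you supply --- the identification of each trace $H_i\cap H_j$ with the trace of some hyperplane of $\mathcal{B}$ via equality of two codimension-two subspaces, and the migration of one zero exponent of $\exp(\mathcal{B})$ into the new exponent $d_r=|\mathcal{A}\setminus\mathcal{B}|$ --- all check out.
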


  \begin{theorem}[{\cite[Proposition 3.2]{St72}} and  {\cite[Theorem 4.37]{OT92}}]
 \label{thm:localization-ss-free}
 If $\A$ is  supersolvable (resp., free), then $\A_X$ is  supersolvable (resp., free) for  any $X \in L(\A)$.
 \end{theorem}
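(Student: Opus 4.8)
The plan is to treat the two assertions separately, working throughout in the intersection poset and exploiting the identification $L(\A_X)=[\hat 0,X]$, the lower interval below $X$ (with $\hat 0=\K^\ell$): indeed every flat containing $X$ is an intersection of the hyperplanes of $\A$ through $X$, and conversely, so $L(\A_X)=\Set{Y\in L(\A) | X\subseteq Y}$. For the supersolvable case I would transport a modular maximal chain of $\A$ down to $\A_X$ by meeting it with $X$; for the free case I would localize the derivation module $D(\A)$ at the prime ideal of $X$ and then descend from the resulting local statement to a global one.

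\emph{Supersolvable case.} It is standard that supersolvability of $\A$ in the sense of Definition \ref{def:supersolvable} is equivalent to $L(\A)$ admitting a maximal chain $\hat 0=Z_0\lessdot Z_1\lessdot\cdots\lessdot Z_r=\hat 1$ of flats, each $Z_i$ modular in $L(\A)$. Set $m_i\coloneqq X+Z_i$ (subspace sum). Since $Z_i$ is modular, $m_i\in L(\A)$, and as $m_i\supseteq X$ we have $m_i\in L(\A_X)$ with $(\A_X)_{m_i}=\A_X\cap\A_{Z_i}$. First I would check that $m_i$ is modular in $L(\A_X)$: for $W\in L(\A_X)$ one has $W\supseteq X$, so $m_i+W=Z_i+(X+W)=Z_i+W\in L(\A)$ by modularity of $Z_i$, and this sum contains $X$, hence lies in $L(\A_X)$. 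Next, using $\rank=\codim$ and the linear-algebra identity $\codim(X+Z_i)+\codim(X\cap Z_i)=\codim X+\codim Z_i$, together with the fact that $\codim(X\cap Z_i)$ increases by $0$ or $1$ as $i$ increases by $1$ (upper semimodularity of $L(\A)$ applied to $Z_i\lessdot Z_{i+1}$), I would conclude that consecutive $m_i$ have ranks differing by $0$ or $1$, hence are equal or related by a cover. Deleting repetitions then yields a chain $\hat 0=m_{i_0}\lessdot\cdots\lessdot m_{i_s}=\hat 1$ of modular elements in $L(\A_X)$, i.e. a modular maximal chain, so $\A_X$ is supersolvable.

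\emph{Free case.} Let $\mathfrak p\subseteq S$ be the prime ideal of polynomials vanishing on $X$. The key observation is the equality $D(\A)_{\mathfrak p}=D(\A_X)_{\mathfrak p}$ of $S_{\mathfrak p}$-modules: for $H\notin\A_X$ the form $\alpha_H$ does not vanish on $X$, hence is a unit in $S_{\mathfrak p}$, so the defining condition $\theta(\alpha_H)\in\alpha_H S_{\mathfrak p}$ becomes vacuous, and only the conditions from $H\in\A_X$ survive (localization being exact on these kernel-type modules). Since $\A$ is free, $D(\A)$ is free over $S$, whence $D(\A_X)_{\mathfrak p}$ is free over $S_{\mathfrak p}$. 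To descend to global freeness I would replace $\A_X$ by its essentialization $\B$ (freeness is preserved under products with an empty arrangement), for which the relevant flat becomes the origin and $\mathfrak p$ becomes the irrelevant maximal ideal $\mathfrak m$; then I would invoke that a finitely generated graded $S$-module free after localizing at $\mathfrak m$ is already free — equivalently $\mathrm{Tor}_1^S(D(\B),S/\mathfrak m)=0$ — to conclude that $D(\B)$, hence $D(\A_X)$, is free.

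The main obstacle is the local-to-global descent in the free case: the identity $D(\A)_{\mathfrak p}=D(\A_X)_{\mathfrak p}$ only delivers freeness after inverting $S\setminus\mathfrak p$, and passing to global freeness genuinely requires essentialization (to move $X$ to the origin so that $\mathfrak p=\mathfrak m$) together with the graded Nakayama-type statement that a graded module free at the irrelevant ideal is free. In the supersolvable case the analogous delicate point is verifying that the chain of meets, after removing repetitions, consists of covering relations; this is precisely what the codimension identity above controls.
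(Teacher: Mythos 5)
The paper gives no proof of this statement---it is imported verbatim from Stanley \cite[Proposition 3.2]{St72} and Orlik--Terao \cite[Theorem 4.37]{OT92}---so there is no internal argument to compare against. Your proposal is a correct reconstruction of the standard proofs in those references: for supersolvability, transporting a modular maximal chain into the interval $[\hat 0,X]=L(\A_X)$ via $Z_i\mapsto X+Z_i$ and pruning repetitions; for freeness, the localization identity $D(\A)_{\mathfrak p}=D(\A_X)_{\mathfrak p}$ followed by essentialization, faithfully flat descent from $S_{\mathfrak p}$ to the local ring of the smaller polynomial ring, and graded Nakayama---the one step you rightly flag as the delicate point.
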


 \begin{proposition}[{\cite[Proposition 2.14 and Lemma 2.50]{OT92}}]
 \label{prop:prod-L(A) isom}
 Let $\A_1$ and $\A_2$ be arrangements. Define a partial order on the
set $L(\A_1) \times L(\A_2)$ of pairs $(X_1, X_2)$ with $X_i \in L(\A_i)$ by 
$$(X_1, X_2) \le (Y_1, Y_2)\Leftrightarrow  X_1 \le Y_1 \mbox{ and } X_2 \le Y_2.$$
There exists a natural isomorphism of lattices 
$$\pi: L(\A_1) \times L(\A_2) \to L(\A_1\times \A_2)$$ 
given by $\pi (X_1, X_2) = X_1\oplus X_2.$ 
In particular, 
$$\chi_{\A_1\times \A_2}(t) =\chi_{\mathcal{A}_1}(t)\cdot\chi_{\mathcal{A}_2}(t) .$$
\end{proposition}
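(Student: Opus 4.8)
The plan is to prove the poset isomorphism first, and then read off the factorization of the characteristic polynomial from the multiplicativity of the Möbius function on a product of posets.

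First I would show that $\pi$ is well defined and surjective by decomposing intersections according to the type of hyperplane. By definition every hyperplane of $\A_1\times\A_2$ is either $H_1\oplus\K^n$ with $H_1\in\A_1$ or $\K^m\oplus H_2$ with $H_2\in\A_2$. Given a subset $\mathcal{B}\subseteq\A_1\times\A_2$, I would split it into its two types $\mathcal{B}_1,\mathcal{B}_2$ and observe that $\bigcap_{H_1\oplus\K^n\in\mathcal{B}_1}(H_1\oplus\K^n)=X_1\oplus\K^n$ where $X_1\coloneqq\bigcap H_1\in L(\A_1)$, and symmetrically $\bigcap_{\K^m\oplus H_2\in\mathcal{B}_2}(\K^m\oplus H_2)=\K^m\oplus X_2$ with $X_2\in L(\A_2)$. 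Intersecting the two gives $X_1\oplus X_2$, which is nonempty since $X_1$ and $X_2$ are; hence every element of $L(\A_1\times\A_2)$ has the form $\pi(X_1,X_2)$. Conversely $X_1\oplus X_2=(X_1\oplus\K^n)\cap(\K^m\oplus X_2)$ always lies in $L(\A_1\times\A_2)$, so $\pi$ is a well-defined surjection onto $L(\A_1\times\A_2)$.

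Next I would verify injectivity and that $\pi$ both preserves and reflects the order. Injectivity is clear because $X_1$ and $X_2$ are recovered from $X_1\oplus X_2$ as its images under the two coordinate projections. For the order, recall that $L(\A_i)$ is ordered by reverse inclusion, so $(X_1,X_2)\le(Y_1,Y_2)$ means $Y_1\subseteq X_1$ and $Y_2\subseteq X_2$; this holds if and only if $Y_1\oplus Y_2\subseteq X_1\oplus X_2$, i.e. if and only if $\pi(X_1,X_2)\le\pi(Y_1,Y_2)$ in $L(\A_1\times\A_2)$. Thus $\pi$ is an isomorphism of posets, and in particular of the associated lattices.

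Finally I would deduce the factorization. Under $\pi$ the bottom element $(\K^m,\K^n)$ corresponds to $\K^m\oplus\K^n$, so the standard multiplicativity of the Möbius function on a product of posets gives $\mu(X_1\oplus X_2)=\mu(X_1)\,\mu(X_2)$. Since also $\dim(X_1\oplus X_2)=\dim X_1+\dim X_2$, summing over $L(\A_1\times\A_2)\cong L(\A_1)\times L(\A_2)$ lets the sum factor:
\begin{align*}
\chi_{\A_1\times\A_2}(t)
&=\sum_{(X_1,X_2)}\mu(X_1)\mu(X_2)\,t^{\dim X_1+\dim X_2}\\
&=\Bigl(\sum_{X_1\in L(\A_1)}\mu(X_1)t^{\dim X_1}\Bigr)\Bigl(\sum_{X_2\in L(\A_2)}\mu(X_2)t^{\dim X_2}\Bigr)=\chi_{\A_1}(t)\,\chi_{\A_2}(t).
\end{align*}
The argument is essentially bookkeeping; the only steps needing genuine care are the decomposition showing that an arbitrary intersection in $\A_1\times\A_2$ splits as $X_1\oplus X_2$ (which rests on sorting the hyperplanes by type and checking nonemptiness), and the invocation of the product formula for the Möbius function, which I would cite rather than reprove.
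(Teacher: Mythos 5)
Your proof is correct; the paper gives no argument of its own here, citing \cite[Proposition 2.14 and Lemma 2.50]{OT92} instead, and your decomposition of an intersection by hyperplane type together with the product formula for the M\"obius function is exactly the standard argument behind that reference. Nothing is missing.
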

 
  \begin{proposition}[{\cite[Proposition 4.28]{OT92}} and {\cite[Proposition 2.5]{HR14}}]
 \label{prop:prod-L(A) free}
 Let $\A_1$ and $\A_2$ be arrangements. The product arrangement $\A_1\times \A_2$ is supersolvable (resp., free) if and only if both $\A_1$ and $\A_2$ are supersolvable (resp., free). In this case, $ \exp (\A) =  \exp (\A_1) \cup  \exp (\A_2) $.
 \end{proposition}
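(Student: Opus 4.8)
The plan is to handle the two assertions—freeness and supersolvability—by separate mechanisms, both exploiting that the defining forms of the two factors involve disjoint sets of variables. Set up coordinates so that $\A_1$ lies in $\K^m$ with coordinate ring $S_1=\K[x_1,\dots,x_m]$, $\A_2$ lies in $\K^n$ with $S_2=\K[y_1,\dots,y_n]$, and $\A_1\times\A_2$ lies in $\K^{m+n}$ with $S=\K[x_1,\dots,x_m,y_1,\dots,y_n]$; then every $H\in\A_1$ has $\alpha_H\in S_1$ and every $H\in\A_2$ has $\alpha_H\in S_2$. The first step for freeness is to decompose the logarithmic derivation module. Writing any $\theta\in\Der(S)$ uniquely as $\theta=\theta_x+\theta_y$ with $\theta_x=\sum_i f_i\partial_{x_i}$ and $\theta_y=\sum_j g_j\partial_{y_j}$ ($f_i,g_j\in S$), we have $\theta_y(\alpha_H)=0$ for $H\in\A_1$ and $\theta_x(\alpha_H)=0$ for $H\in\A_2$, so the condition defining $D(\A_1\times\A_2)$ splits into independent conditions on $\theta_x$ and $\theta_y$. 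This yields a direct sum of graded $S$-modules
\[ D(\A_1\times\A_2)=\bigl(D(\A_1)\otimes_{S_1}S\bigr)\oplus\bigl(D(\A_2)\otimes_{S_2}S\bigr), \]
where the identification of the $x$-summand with $D(\A_1)\otimes_{S_1}S$ follows from flatness (indeed freeness) of $S$ over $S_1$ applied to the $S_1$-linear defining relations, together with $\alpha_H S=\alpha_H S_1\otimes_{S_1}S$.

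Given this decomposition, the forward implication is immediate: a homogeneous $S_1$-basis of $D(\A_1)$ and a homogeneous $S_2$-basis of $D(\A_2)$ extend to $S$-bases of the two summands with unchanged degrees, and their union is a homogeneous $S$-basis of $D(\A_1\times\A_2)$; hence $\A_1\times\A_2$ is free with $\exp(\A_1\times\A_2)=\exp(\A_1)\cup\exp(\A_2)$ (consistent with Theorem \ref{thm:Factorization} and the product formula $\chi_{\A_1\times\A_2}=\chi_{\A_1}\chi_{\A_2}$ of Proposition \ref{prop:prod-L(A) isom}). For the converse, if $D(\A_1\times\A_2)$ is free then each summand, being a direct summand of a finitely generated graded free module over the nonnegatively graded polynomial ring $S$, is graded projective and therefore graded free; specializing $y_1=\dots=y_n=0$, i.e.\ tensoring with $S/(y_1,\dots,y_n)\cong S_1$, recovers $D(\A_1)$ as a graded free $S_1$-module of the same degrees, and symmetrically for $\A_2$.

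For supersolvability I would argue lattice-theoretically. By Proposition \ref{prop:prod-L(A) isom} there is an isomorphism $L(\A_1\times\A_2)\cong L(\A_1)\times L(\A_2)$ via $\pi(X_1,X_2)=X_1\oplus X_2$, under which localizations match as $(\A_1\times\A_2)_{X_1\oplus X_2}=(\A_1)_{X_1}\times(\A_2)_{X_2}$, since $X_1\oplus X_2\subseteq H_1\oplus\K^n$ iff $X_1\subseteq H_1$. Because rank is additive in the product, a coatom of $\A_1\times\A_2$ has the form $(X_1,\text{top})$ or $(\text{top},X_2)$, and using the criterion of Proposition \ref{prop:ss} one checks that such an inclusion of localizations is a modular-coatom step in the product exactly when it is a modular-coatom step in one factor with the other held at its maximal element. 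Concatenating an M-chain for $\A_1$ (in the first coordinate, with the second coordinate fixed at the bottom) followed by an M-chain for $\A_2$ (with the first coordinate fixed at the top) therefore produces an M-chain for $\A_1\times\A_2$ of length $r(\A_1)+r(\A_2)=r(\A_1\times\A_2)$ in the sense of Definition \ref{def:supersolvable}; conversely, projecting an M-chain of the product onto each coordinate yields M-chains for the factors.

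The main obstacle is the converse direction of freeness: passing from ``the direct sum is free'' to ``each summand is free.'' The crux is that a direct summand of a finitely generated graded free module over a graded polynomial ring is itself graded free—this rests on graded projective-equals-free (graded Nakayama), and one must keep the grading and the exponent bookkeeping aligned with the degree-preserving specialization $S\to S_1$. For supersolvability the analogous delicate point is verifying, through Proposition \ref{prop:ss}, that each interleaved step is genuinely a modular coatom of the next localization, rather than merely establishing that the corresponding element is modular in the product lattice.
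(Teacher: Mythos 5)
Your proof is correct and follows essentially the same route as the sources the paper cites for this statement (the paper itself gives no proof, only references): the direct-sum decomposition $D(\A_1\times\A_2)\cong\bigl(D(\A_1)\otimes_{S_1}S\bigr)\oplus\bigl(D(\A_2)\otimes_{S_2}S\bigr)$ with graded-projective-implies-free for the converse is exactly the argument behind \cite[Proposition 4.28]{OT92}, and the factorization of localizations and modular coatoms under $L(\A_1\times\A_2)\cong L(\A_1)\times L(\A_2)$ is the standard proof of the supersolvable case as in \cite[Proposition 2.5]{HR14}. No gaps; the only caveat is that freeness and supersolvability are defined here for central arrangements, so the statement implicitly assumes both factors are central.
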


An arrangement $ \mathcal{A} $ in $ \mathbb{K}^{\ell} $ is called \textbf{essential} if $ r(\mathcal{A}) = \ell $. 
Any arrangement $ \mathcal{A} $ of rank $r$ in $\K^\ell$ can be written as the product of an essential arrangement $ \mathcal{A}^{\mathrm{ess}} $ and the $(\ell-r)$-dimensional empty arrangement $ \Phi_{\ell-r}$. 
We call $ \mathcal{A}^{\mathrm{ess}} $ the \textbf{essentialization} of $ \mathcal{A} $ (see e.g., \cite[\S1.1]{St07}). 
Note that $ \mathcal{A} $ is free if and only if $ \mathcal{A}^{\mathrm{ess}} $ is free and also note that $ \chi_{\mathcal{A}}(t) = t^{\ell-r}\chi_{\mathcal{A^{\mathrm{ess}}}}(t) $. 

 \begin{proposition} \label{prop:modular coatom}
 Let $\A$ be a central arrangement and let  $X\in L(\mathcal{A})$ be a modular coatom. 
The following statements hold.
\begin{enumerate}[(1)]
\item $\chi_{\mathcal{A}^{\mathrm{ess}}}(t) = \left(t-|\mathcal{A}\setminus\mathcal{A}_{X}|\right)\cdot\chi_{\mathcal{A}_{X}^{\mathrm{ess}}}(t) $. 
\item $ \mathcal{A} $ is free if and only if $ \mathcal{A}_{X} $ is free. Furthermore, $ \exp (\A^{\mathrm{ess}}) = \exp (\A_X^{\mathrm{ess}} ) \cup \{|\mathcal{A}\setminus\mathcal{A}_{X}|\} $. 
\end{enumerate}
\end{proposition}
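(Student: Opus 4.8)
The plan is to reduce to the essential case and then argue by induction on $b\coloneqq|\mathcal{A}\setminus\mathcal{A}_X|$, the number of hyperplanes of $\mathcal{A}$ not containing $X$. Since both assertions are phrased through essentializations and $\mathcal{A}=\mathcal{A}^{\mathrm{ess}}\times\Phi_{\ell-r(\mathcal{A})}$, Propositions \ref{prop:prod-L(A) isom} and \ref{prop:prod-L(A) free} let me transport the modular coatom $X$ to $\mathcal{A}^{\mathrm{ess}}$ (modularity and corank are lattice invariants) and replace $\mathcal{A}$ by $\mathcal{A}^{\mathrm{ess}}$. Thus I may assume $\mathcal{A}$ is essential of rank $\ell$, so that $X$ is a line and $\mathcal{A}_X$, being the set of all hyperplanes through $X$, is central with center $X$ and rank $\ell-1$ (recall every element of $L(\mathcal{A})$ is the intersection of the hyperplanes above it). As $X$ is a proper coatom, $b\ge1$.

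The crux is the following structural lemma: \emph{for every $G\in\mathcal{A}\setminus\mathcal{A}_X$, the restriction $\mathcal{A}^G$ is linearly isomorphic to $\mathcal{A}_X^{\mathrm{ess}}$.} First I would fix $G$ and show that $H''\mapsto H''\cap G$ is a bijection $\mathcal{A}_X\to\mathcal{A}^G$. Surjectivity uses the characterization in Proposition \ref{prop:ss}: for $K\in\mathcal{A}\setminus\mathcal{A}_X$ with $K\neq G$ there is $H''\in\mathcal{A}_X$ with $K\cap G\subseteq H''$, and since $K\cap G$ and $H''\cap G$ both have codimension $1$ in $G$, this forces $K\cap G=H''\cap G$; hence every hyperplane of $\mathcal{A}^G$ already arises from $\mathcal{A}_X$. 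Injectivity follows because $X\cap G=\{0\}$ (as $X$ is a line not contained in $G$): a common value $W=H_1''\cap G=H_2''\cap G$ has $W\cap X=\{0\}$, so $W+X$ is a hyperplane contained in each $H_i''$, whence $H_1''=W+X=H_2''$. Finally, since $G$ is a linear complement of the center $X$ of $\mathcal{A}_X$, restricting $\mathcal{A}_X$ to $G$ realizes exactly $\mathcal{A}_X^{\mathrm{ess}}$, and by the previous identification this is precisely $\mathcal{A}^G$. I expect this lemma to be the main obstacle, as it is where the modularity hypothesis is consumed; the rest is bookkeeping.

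With the lemma in hand the induction is routine. In the inductive step I set $\mathcal{A}'\coloneqq\mathcal{A}\setminus\{G\}$; for $b\ge2$ it is again essential, $X$ remains a modular coatom of $\mathcal{A}'$ (the criterion of Proposition \ref{prop:ss} is inherited because $\mathcal{A}'\setminus\mathcal{A}'_X\subseteq\mathcal{A}\setminus\mathcal{A}_X$ and $\mathcal{A}'_X=\mathcal{A}_X$), and $|\mathcal{A}'\setminus\mathcal{A}'_X|=b-1$. Part (1) then follows from the deletion–restriction recursion $\chi_{\mathcal{A}}(t)=\chi_{\mathcal{A}'}(t)-\chi_{\mathcal{A}^G}(t)$ (e.g.\ \cite[Theorem 2.56]{OT92}) together with $\chi_{\mathcal{A}^G}(t)=\chi_{\mathcal{A}_X^{\mathrm{ess}}}(t)$ and the inductive identity $\chi_{\mathcal{A}'}(t)=(t-(b-1))\chi_{\mathcal{A}_X^{\mathrm{ess}}}(t)$, which combine to give $\chi_{\mathcal{A}}(t)=(t-b)\chi_{\mathcal{A}_X^{\mathrm{ess}}}(t)$. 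The base case $b=1$ has $\mathcal{A}'=\mathcal{A}_X$, where $\chi_{\mathcal{A}_X}(t)=t\,\chi_{\mathcal{A}_X^{\mathrm{ess}}}(t)$ yields $\chi_{\mathcal{A}}(t)=(t-1)\chi_{\mathcal{A}_X^{\mathrm{ess}}}(t)$.

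For part (2), one implication is immediate: if $\mathcal{A}$ is free, then so is its localization $\mathcal{A}_X$ by Theorem \ref{thm:localization-ss-free}. For the converse, assume $\mathcal{A}_X$ (equivalently $\mathcal{A}_X^{\mathrm{ess}}$, by Proposition \ref{prop:prod-L(A) free}) is free. The lemma makes $\mathcal{A}^G$ free with $\exp(\mathcal{A}^G)=\exp(\mathcal{A}_X^{\mathrm{ess}})$, and the inductive hypothesis makes $\mathcal{A}'$ free with $\exp(\mathcal{A}'^{\mathrm{ess}})=\exp(\mathcal{A}_X^{\mathrm{ess}})\cup\{b-1\}$ (for $b=1$ this reads $\exp(\mathcal{A}_X)=\exp(\mathcal{A}_X^{\mathrm{ess}})\cup\{0\}$); Terao's Addition–Deletion Theorem (e.g.\ \cite[Theorem 4.51]{OT92}) applied to the triple $(\mathcal{A},\mathcal{A}',\mathcal{A}^G)$ then gives freeness of $\mathcal{A}$ with $\exp(\mathcal{A})=\exp(\mathcal{A}_X^{\mathrm{ess}})\cup\{b\}$. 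Alternatively, once both $\mathcal{A}$ and $\mathcal{A}_X$ are known to be free, the exponent identity drops out of part (1) by comparing the roots in the Factorization Theorem \ref{thm:Factorization}.
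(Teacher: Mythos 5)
Your argument is correct, but it is genuinely different from the one in the paper. The paper disposes of (1) in one line by citing Stanley's factorization theorem for modular elements (\cite[Theorem 2]{St71}), and for the nontrivial direction of (2) it invokes the fact that the restriction $\A^H$ to any $H\in\A\setminus\A_X$ has the same intersection lattice as $\A_X$ (\cite[Lemma 2.62]{OT92}) and then concludes via the division theorem \cite[Theorem 1.1]{Abe16}, since by (1) the polynomial $\chi_{\A^H}(t)$ divides $\chi_{\A}(t)$. You instead prove from scratch the structural fact that both arguments secretly rest on --- that for $G\in\A\setminus\A_X$ the map $H''\mapsto H''\cap G$ is a bijection $\A_X\to\A^G$ realizing $\A^G$ as (a linear copy of) $\A_X^{\mathrm{ess}}$, with surjectivity consuming the modularity via Proposition \ref{prop:ss} --- and then run an induction on $|\A\setminus\A_X|$ using deletion--restriction for (1) and Terao's addition--deletion theorem for (2). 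I checked the details (essentiality of $\A'=\A\setminus\{G\}$ when $b\ge 2$, persistence of $X$ as a modular coatom of $\A'$, the exponent bookkeeping in the base case $b=1$ where $\A'=\A_X$ has rank $\ell-1$) and they all go through. What your route buys is self-containedness and elementarity: you avoid both Stanley's theorem and the division theorem, at the cost of a longer argument; what the paper's route buys is brevity, and its use of the division theorem sidesteps the exponent-matching hypotheses that addition--deletion requires (though here, as you show, those hypotheses are satisfied anyway). Your closing remark that the exponent identity in (2) can alternatively be read off from (1) via Theorem \ref{thm:Factorization} matches how the paper would obtain it.
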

\begin{proof}
(1) follows from {\cite[Theorem 2]{St71}}. 
We only need to show that if $ \mathcal{A}_{X} $ is free, then $ \mathcal{A} $ is free. 
Note that by \cite[Lemma 2.62]{OT92} there exists $H\in \A$ such that $\A^H$ is supersolvable and $L(\A^H) \simeq L( \mathcal{A}_{X_{r(\A)-1}})$. 
Apply \cite[Theorem 1.1]{Abe16}. 
\end{proof}

A simplicial vertex in a simple undirected graph is a vertex whose neighbors form a clique (every two neighbors are adjacent). 
The following is a version of simplicial vertices in vertex-weighted digraphs.
\begin{definition}
Let $(G,\psi)$ be a vertex-weighted digraph on $ [\ell] $. 
Let $v$ be a vertex in $G$ and let $X_v\in L(\mathbf{c}\mathcal{A}(G,\psi) )$ be the intersection of the following hyperplanes:
\begin{align*}
z&= 0 \\
x_{i}-x_{j} &= 0 \qquad (i,j\in [\ell]\setminus\{v\}), \\
x_{i}-x_{j} &= z \qquad ((i,j) \in E_{G}, i,j\in [\ell]\setminus\{v\}), \\
 x_{i} &= cz\qquad (c \in \psi(i), i\in [\ell]\setminus\{v\}). 
\end{align*}
The vertex $ v $ is said to be \textbf{simplicial} in $ (G,\psi) $ if $X_v$ is a modular coatom of $ \mathbf{c}\mathcal{A}(G,\psi) $.
\end{definition}

Let  $ G\setminus v $ denote the subgraph obtained from $G$ by removing $ v $ and the edges incident to and from $v$.  
Thus $( \mathbf{c}\mathcal{A}(G,\psi) )_{X_v}=\mathbf{c}\mathcal{A}\left(G\setminus v, \psi|_{[\ell]\setminus\{v\}}\right) \times \Phi_{1}$. 

\begin{proposition}
\label{prop:simplicial}
Let $ v $ be a simplicial vertex of a vertex-weighted digraph $ (G,\psi) $ on $ [\ell] $. 
Then the following statements hold. 
\begin{enumerate}[(1)]
\item $ \chi_{\mathcal{A}(G,\psi)}(t) = \left(t - (|\psi(v)|+e+\ell-1)\right)\chi_{\mathcal{A}(G\setminus v, \psi|_{[\ell]\setminus\{v\}})}(t) $, where $ e $ denotes the number of edges incident to and from $ v $. \\
\item $ \mathbf{c}\mathcal{A}(G,\psi) $ is free if and only if $ \mathbf{c}\mathcal{A}(G\setminus v,\psi|_{[\ell]\setminus v}) $. 
\end{enumerate}
\end{proposition}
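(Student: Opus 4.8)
The whole statement will follow once I identify the localization $(\mathbf{c}\mathcal{A}(G,\psi))_{X_v}$ explicitly and then feed everything into Proposition~\ref{prop:modular coatom}, since by hypothesis $X_v$ is a modular coatom of $\mathbf{c}\mathcal{A}(G,\psi)$. The key observation, already recorded in the paragraph preceding the statement, is that
\begin{align*}
(\mathbf{c}\mathcal{A}(G,\psi))_{X_v} = \mathbf{c}\mathcal{A}\left(G\setminus v,\ \psi|_{[\ell]\setminus\{v\}}\right)\times \Phi_1.
\end{align*}
The first thing I would do is verify this localization formula carefully: $\mathcal{A}_{X_v}$ consists exactly of those hyperplanes of $\mathbf{c}\mathcal{A}(G,\psi)$ containing $X_v$, and by the defining list of $X_v$ these are precisely $z=0$ together with all hyperplanes of $\mathbf{c}\mathcal{A}(G,\psi)$ not involving the index $v$, i.e.\ the cone over the $\psi$-digraphical arrangement of the deleted graph, times a trivial factor $\Phi_1$ for the coordinate $x_v$.

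\emph{Proof of (1).} With the localization identified, I would apply Proposition~\ref{prop:modular coatom}(1), which gives
\begin{align*}
\chi_{(\mathbf{c}\mathcal{A}(G,\psi))^{\mathrm{ess}}}(t) = \left(t - |\mathbf{c}\mathcal{A}(G,\psi)\setminus \mathcal{A}_{X_v}|\right)\cdot \chi_{(\mathcal{A}_{X_v})^{\mathrm{ess}}}(t).
\end{align*}
The remaining arithmetic is a count: the hyperplanes of $\mathbf{c}\mathcal{A}(G,\psi)$ \emph{not} in $\mathcal{A}_{X_v}$ are exactly those ``touching'' $v$, namely the homogenized Coxeter hyperplanes ${}^h(x_v-x_j)$ and ${}^h(x_i-x_v)$ for $j\neq v$ (contributing $\ell-1$), the homogenized Shi-type hyperplanes ${}^h(x_i-x_j=z)$ with an endpoint at $v$ (contributing $e$), and the homogenized weight hyperplanes ${}^h(x_v=cz)$ for $c\in\psi(v)$ (contributing $|\psi(v)|$). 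Hence $|\mathbf{c}\mathcal{A}(G,\psi)\setminus\mathcal{A}_{X_v}| = |\psi(v)|+e+\ell-1$. Finally I would translate the essentialized statement back to the non-central, un-essentialized form using the stated relations $\chi_{\mathbf{c}\mathcal{A}}(t)=(t-1)\chi_{\mathcal{A}}(t)$ and $\chi_{\mathcal{A}}(t)=t^{\ell-r}\chi_{\mathcal{A}^{\mathrm{ess}}}(t)$ (and Proposition~\ref{prop:prod-L(A) isom} to strip the $\Phi_1$ factor), which removes the common factor $(t-1)$ on both sides and yields the stated identity for $\chi_{\mathcal{A}(G,\psi)}$ against $\chi_{\mathcal{A}(G\setminus v,\psi|_{[\ell]\setminus\{v\}})}$.

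\emph{Proof of (2).} This is immediate from Proposition~\ref{prop:modular coatom}(2): since $X_v$ is a modular coatom, $\mathbf{c}\mathcal{A}(G,\psi)$ is free if and only if $\mathcal{A}_{X_v}=\mathbf{c}\mathcal{A}(G\setminus v,\psi|_{[\ell]\setminus\{v\}})\times\Phi_1$ is free; and by Proposition~\ref{prop:prod-L(A) free} the latter product is free if and only if $\mathbf{c}\mathcal{A}(G\setminus v,\psi|_{[\ell]\setminus\{v\}})$ is free, the $\Phi_1$ factor being trivially free. I expect the main obstacle to be purely bookkeeping rather than conceptual: one must keep the cone coordinate $z$ and the deleted coordinate $x_v$ straight, make sure the edge count $e$ correctly tallies edges both into and out of $v$ without double-counting, and handle the essentialization/de-essentialization bookkeeping so that the ranks and the factors of $t$ and $(t-1)$ match on both sides. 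The conceptual content is entirely carried by the modular-coatom machinery already in place.
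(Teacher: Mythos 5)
Your proposal is correct and follows exactly the paper's route: the paper proves this proposition by simply invoking Proposition \ref{prop:modular coatom} together with the identification $(\mathbf{c}\mathcal{A}(G,\psi))_{X_v}=\mathbf{c}\mathcal{A}(G\setminus v,\psi|_{[\ell]\setminus\{v\}})\times\Phi_{1}$ stated just before the definition of a simplicial vertex. Your hyperplane count $|\psi(v)|+e+\ell-1$ and the essentialization/coning bookkeeping are the details the paper leaves implicit, and they check out.
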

\begin{proof}
It follows immediately from Proposition \ref{prop:modular coatom}. 
\end{proof}

\begin{proposition}
 \label{prop:isolated}
 Let $(G,\psi)$ be a vertex-weighted digraph on $ [\ell] $. 
Let $ v $ be an isolated vertex (a vertex that is not incident to or from any edge) of $ G $. 
If the weight of $v$ is minimal in $(G,\psi)$, i.e., $ \psi(v) \subseteq \psi(i) $ for all $ i \in [\ell] $, then $ v $ is simplicial in $ (G,\psi) $. 
\end{proposition}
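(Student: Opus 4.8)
The plan is to verify directly from the definition that $X_v$ is a modular coatom of $\mathbf{c}\mathcal{A}(G,\psi)$, using the criterion of Proposition \ref{prop:ss}. First I would pin down $X_v$ as a concrete subspace: since $X_v$ lies on $z=0$ and satisfies $x_i=cz$ for some $c\in\psi(i)$ for every $i\neq v$, we get $x_i=0$ for all $i\in[\ell]\setminus\{v\}$ while $x_v$ remains free. Hence $X_v$ is the coordinate line $\{z=0,\ x_i=0\ (i\neq v)\}$, of rank $\ell$, so it is a corank-$1$ flat as required for a coatom. Determining the localization $\mathcal{A}_{X_v}$ is then immediate: a hyperplane of $\mathbf{c}\mathcal{A}(G,\psi)$ contains $X_v$ precisely when its defining form does not involve the coordinate $x_v$ (together with $H_{\infty}$), so the complement $\mathbf{c}\mathcal{A}(G,\psi)\setminus\mathcal{A}_{X_v}$ consists exactly of the hyperplanes through $v$.

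Because $v$ is isolated there are no edge hyperplanes at $v$, so this complement splits into two families: the Coxeter hyperplanes $\{x_i-x_v=0\}$ for $i\in[\ell]\setminus\{v\}$, and the weight hyperplanes $\{x_v=cz\}$ for $c\in\psi(v)$. The heart of the argument is to apply Proposition \ref{prop:ss}, checking for every pair of distinct hyperplanes $H,H'$ in this complement that $H\cap H'$ is contained in some $H''\in\mathcal{A}_{X_v}$. I would organize this into three cases. For two Coxeter hyperplanes $\{x_i-x_v=0\}$ and $\{x_j-x_v=0\}$, their intersection forces $x_i=x_j$, so it lies in $\{x_i-x_j=0\}\in\mathcal{A}_{X_v}$. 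For two weight hyperplanes $\{x_v=cz\}$ and $\{x_v=c'z\}$ with $c\neq c'$, subtracting gives $z=0$, so the intersection lies in $H_{\infty}=\{z=0\}\in\mathcal{A}_{X_v}$.

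The remaining mixed case is where the minimality hypothesis enters, and I expect this to be the crux. Given $\{x_i-x_v=0\}$ and $\{x_v=cz\}$ with $c\in\psi(v)$, the intersection satisfies $x_i=x_v=cz$, hence lies in $\{x_i=cz\}$; this is a hyperplane of $\mathbf{c}\mathcal{A}(G,\psi)$ lying in $\mathcal{A}_{X_v}$ exactly when $c\in\psi(i)$, which is guaranteed precisely by the minimality assumption $\psi(v)\subseteq\psi(i)$. With all three cases verified, Proposition \ref{prop:ss} shows $\mathcal{A}_{X_v}$ is a modular coatom, i.e., $v$ is simplicial. The only subtleties to watch are that the criterion requires genuinely distinct $H,H'$ and that the complement is nonempty, both of which hold since $v$ has its $\ell-1$ incident Coxeter hyperplanes and $|\psi(v)|\geq 1$.
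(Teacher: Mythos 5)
Your proof is correct and follows the same route as the paper, which simply states that the proposition is an easy application of Proposition \ref{prop:ss}; you have supplied exactly the case analysis that remark leaves implicit, including the one place (the mixed case $x_i-x_v=0$ versus $x_v=cz$) where the minimality hypothesis $\psi(v)\subseteq\psi(i)$ is actually needed.
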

\begin{proof}
The statement is an easy application of Proposition \ref{prop:ss}. 
\end{proof}

Now we give an important application of Theorem \ref{coincidence characteristic polynomials} that will be used later for the proof of Theorem \ref{main1}.

\begin{theorem}
 \label{thm:Akl}
The sequence $ \Shi(\ell)=\mathcal{A}_{\ell}^{2}, \mathcal{A}_{\ell}^{3}, \dots, \mathcal{A}_{\ell}^{\ell} = \Ish(\ell) $ of arrangements can be derived by CEOs. 
Moreover, we obtain an alternative proof of Theorem \ref{Duarte--Guedes-de-Oliveira}.
 \end{theorem}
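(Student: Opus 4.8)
The plan is to realize the passage from $\mathcal{A}_{\ell}^{k}$ to $\mathcal{A}_{\ell}^{k+1}$ as a single CEO carried out on the ``tournament core'' of the associated vertex-weighted digraph, and then to invoke the stability of characteristic polynomials (Theorem \ref{coincidence characteristic polynomials}). First I would pass to the digraph model of Proposition \ref{prop:Akl modified}, which gives $\mathcal{A}_{\ell+1}^{k+1} \aff \mathcal{A}(T_{\ell}^{k}, \psi_{\ell}^{k}) \times \Phi_{1}$; since the factor $\Phi_{1}$ contributes a common factor $t$ to every characteristic polynomial in sight (Proposition \ref{prop:prod-L(A) isom}), it suffices to compare the arrangements $\mathcal{A}(T_{\ell}^{k}, \psi_{\ell}^{k})$ as $k$ ranges over $1, \dots, \ell-1$. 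Inspecting the definitions, the passage from $(T_{\ell}^{k}, \psi_{\ell}^{k})$ to $(T_{\ell}^{k+1}, \psi_{\ell}^{k+1})$ deletes exactly the edges pointing into the sink $v = \ell-k+1$ of the core tournament and lowers by one the left endpoint of every core weight $[-k,0]\mapsto[-(k+1),0]$, while leaving $\psi(v)=[-k,0]$ and the already-saturated weights on the isolated vertices unchanged; this is precisely the bookkeeping of a CEO with respect to $v$.

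The main obstacle is that $v=\ell-k+1$ is \emph{not} a coking of the whole digraph $T_{\ell}^{k}$: the vertices $\ell-k+2, \dots, \ell$ are isolated, so Definition \ref{deformation coking} (and hence Theorem \ref{coincidence characteristic polynomials}) does not literally apply, and a naive CEO would wrongly perturb the weights on those isolated vertices as well. To get around this I would first strip off the isolated vertices. An isolated vertex of minimal weight is simplicial by Proposition \ref{prop:isolated}, and peeling it off multiplies the characteristic polynomial by an explicit linear factor by Proposition \ref{prop:simplicial}(1). The crucial point is that the isolated vertices $\ell-k+2, \dots, \ell$, together with their (saturated) weights, are common to both $(T_{\ell}^{k},\psi_{\ell}^{k})$ and $(T_{\ell}^{k+1},\psi_{\ell}^{k+1})$; peeling them off in increasing order of weight therefore contributes the very same product of factors to both characteristic polynomials, and at each stage the vertex removed indeed has minimal weight, so Proposition \ref{prop:isolated} applies.

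After this peeling the two cores become $\mathcal{A}(T_{\ell-k+1}, [-k,0])$, a full transitive tournament with constant weight, and the arrangement on the same vertex set obtained from it by deleting the edges into the sink $v$ and replacing $[-k,0]\mapsto[-(k+1),0]$ on every vertex other than $v$. Now $v$ is a genuine coking of the full tournament, and Condition \ref{def:conditions}(\tbf{C}) holds trivially, since $0\in[-k,0]$ and $[-k,-1]\subseteq[-k,0]=\psi(v)$; thus the second core is exactly the CEO of the first. Theorem \ref{coincidence characteristic polynomials}(1) then yields that the two cores share the same characteristic polynomial, and, combined with the equal peeling factors, this gives $\chi_{\mathcal{A}(T_{\ell}^{k},\psi_{\ell}^{k})} = \chi_{\mathcal{A}(T_{\ell}^{k+1},\psi_{\ell}^{k+1})}$, hence $\chi_{\mathcal{A}_{\ell}^{k}} = \chi_{\mathcal{A}_{\ell}^{k+1}}$. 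Telescoping over $k$ shows the whole sequence is linked by CEOs on cores and that all the $\mathcal{A}_{\ell}^{k}$ share one characteristic polynomial.

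Finally, since $\mathcal{A}_{\ell}^{2}=\Shi(\ell)$ has the classically known characteristic polynomial $t(t-\ell)^{\ell-1}$, the common value must be $t(t-\ell)^{\ell-1}$, which reproves Theorem \ref{Duarte--Guedes-de-Oliveira}. I expect the genuine difficulty to lie entirely in the isolated-vertex issue of the second paragraph: one must check that the simplicial peeling is legitimate at every stage and that it produces identical factors for the two consecutive members, so that the clean CEO on the cores can be transferred back to an identity of characteristic polynomials for the original arrangements; once this is in place the remaining verifications are the routine index computations deleting edges into the sink and matching the shifted weight intervals.
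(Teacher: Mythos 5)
Your proposal is correct and follows essentially the same route as the paper: pass to the model of Proposition \ref{prop:Akl modified}, restrict to the induced subgraph on $[\ell-k+1]$ where $v=\ell-k+1$ is a genuine coking, check Condition \ref{def:conditions}(\textbf{C}) and apply Theorem \ref{coincidence characteristic polynomials}(1) to the cores, and handle the isolated vertices $\ell-k+2,\dots,\ell$ (whose weights agree in $\psi_{\ell}^{k}$ and $\psi_{\ell}^{k+1}$) by simplicial peeling via Propositions \ref{prop:isolated} and \ref{prop:simplicial}, which contributes identical linear factors on both sides. The only cosmetic difference is the anchor for the explicit value: the paper peels the edgeless digraph at the Ish end to compute $\chi_{\mathcal{A}(T_{\ell}^{\ell},\psi_{\ell}^{\ell})}(t)=(t-(\ell+1))^{\ell}$ internally, whereas you cite the classical characteristic polynomial of $\Shi(\ell)$; both are legitimate.
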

\begin{proof}
First recall from Proposition \ref{prop:Akl modified} that $ \mathcal{A}_{\ell+1}^{k+1} \aff \mathcal{A}(T_{\ell}^{k}, \psi_{\ell}^{k}) \times \Phi_{1} $.
We will show that $ (T_{\ell}^{k+1}, \psi_{\ell}^{k+1}) $ is obtained from $ (T_{\ell}^{k}, \psi_{\ell}^{k}) $ by using CEO. 
Note that each vertex $ i \in [\ell-k+2, \ell] $ is isolated in both $ T_{\ell}^{k} $ and $ T_{\ell}^{k+1} $. 

The vertex $ v \coloneqq \ell-k+1 $ is a coking of $ T_{\ell}^{k}[\ell-k+1] $, the induced subgraph of $ T_{\ell}^{k} $ over $ [\ell-k+1] $. 
Now apply the CEO to $ T_{\ell}^{k}[\ell-k+1] $ with respect to the coking $ v $, 
we obtain $ (T_{\ell}^{k}[\ell-k+1])^{\prime} = T_{\ell}^{k+1}[\ell-k+1] $ and 
\begin{align*}
(\psi_{\ell}^{k})^{\prime}(v) &= [-\min\{\ell-v+1, k\}, 0] = [-k,0] \\
&= [-\min\{\ell-v+1, k+1\}, 0] = \psi_{\ell}^{k+1}(v). 
\end{align*}
Moreover, for every $ i \in [\ell-k] $
\begin{align*}
(\psi_{\ell}^{k})^{\prime}(i) &= [-\min\{\ell-i+1, k\}-1, 0] = [-k-1,0] \\
&= [-\min\{\ell-i+1, k+1\}, 0] = \psi_{\ell}^{k+1}(i). 
\end{align*}
Therefore $ ((T_{\ell}^{k}[\ell-k+1])^{\prime}, (\psi_{\ell}^{k}|_{[\ell-k+1]})^{\prime}) = (T_{\ell}^{k+1}[\ell-k+1], \psi_{\ell}^{k+1}|_{[\ell-k+1]}) $. 
By adding isolated vertices $ \ell-k+2, \dots, \ell $ to each of $ T_{\ell}^{k}[\ell-k+1] $ and $ T_{\ell}^{k+1}[\ell-k+1] $, we obtain $ T_{\ell}^{k+1} $ from $ T_{\ell}^{k} $ by using CEO. 

Next we will show that $ \chi_{\mathcal{A}(T_{\ell}^{k},\psi_{\ell}^{k})}(t) = \chi_{\mathcal{A}(T_{\ell}^{k+1},\psi_{\ell}^{k+1})}(t) $ for every $ k \in [\ell-1] $. 
For every $ i \in [\ell-k+1] $, $ \psi_{\ell}^{k}(i) = [-k,0] = \psi_{\ell}^{k}(v) $, where $ v = \ell-k+1 $ is the coking of $ T_{\ell}^{k}[\ell-k+1] $. 
Then Condition \ref{def:conditions}(\tbf{C}) is satisfied and hence $ \chi_{\mathcal{A}(T_{\ell}^{k}[\ell-k+1],\psi_{\ell}^{k}|_{[\ell-k+1]})}(t) = \chi_{\mathcal{A}(T_{\ell}^{k+1}[\ell-k+1],\psi_{\ell}^{k+1}|_{[\ell-k+1]})}(t) $ by Theorem \ref{coincidence characteristic polynomials}. 

For any vertex $ i \in [\ell-k+2, \ell] $, it is easy to see that $ \psi_{\ell}^{k}(i) = \psi_{\ell}^{k+1}(i) $ and 
\begin{align*}
\psi_{\ell}^{k}(1) &\supseteq \psi_{\ell}^{k}(2) \supseteq \dots \supseteq \psi_{\ell}^{k}(\ell), \\
\psi_{\ell}^{k+1}(1) &\supseteq \psi_{\ell}^{k+1}(2) \supseteq \dots \supseteq \psi_{\ell}^{k+1}(\ell). 
\end{align*}
Therefore the vertex $ i $ is simplicial in each of $ T_{\ell}^{k}[\{1,2, \dots, i\}] $ and $ T_{\ell}^{k+1}[\{1,2, \dots, i\}] $ by Proposition \ref{prop:isolated}. 
Using Proposition \ref{prop:simplicial}, we have $ \chi_{\mathcal{A}(T_{\ell}^{k},\psi_{\ell}^{k})}(t) = \chi_{\mathcal{A}(T_{\ell}^{k+1},\psi_{\ell}^{k+1})}(t) $. 

In particular, 
\begin{align*}
\chi_{\mathcal{A}(T_{\ell}^{\ell},\psi_{\ell}^{\ell})} = \prod_{i=1}^{\ell}\left(t-(|\psi_{\ell}^{\ell}(i)|+i-1)\right) = \prod_{i=1}^{\ell}\left(t-((\ell+2-i)+i-1)\right) = (t-(\ell+1))^{\ell}. 
\end{align*}
Thus, 
\begin{align*}
\chi_{\mathcal{A}_{\ell}^{2}}(t) = \chi_{\mathcal{A}_{\ell}^{3}}(t) = \dots = \chi_{\mathcal{A}_{\ell}^{\ell}}(t) = t \, \chi_{\mathcal{A}(T_{\ell-1}^{\ell-1}, \psi_{\ell-1}^{\ell-1})}(t) = t(t-\ell)^{\ell-1}. 
\end{align*}
\end{proof}

Figure \ref{fig:(k,5)} depicts the CEO sequence in Theorem \ref{thm:Akl} that applies to $  \Shi(5)$ and produces all $(k,5)$-Shi-Ish arrangements.
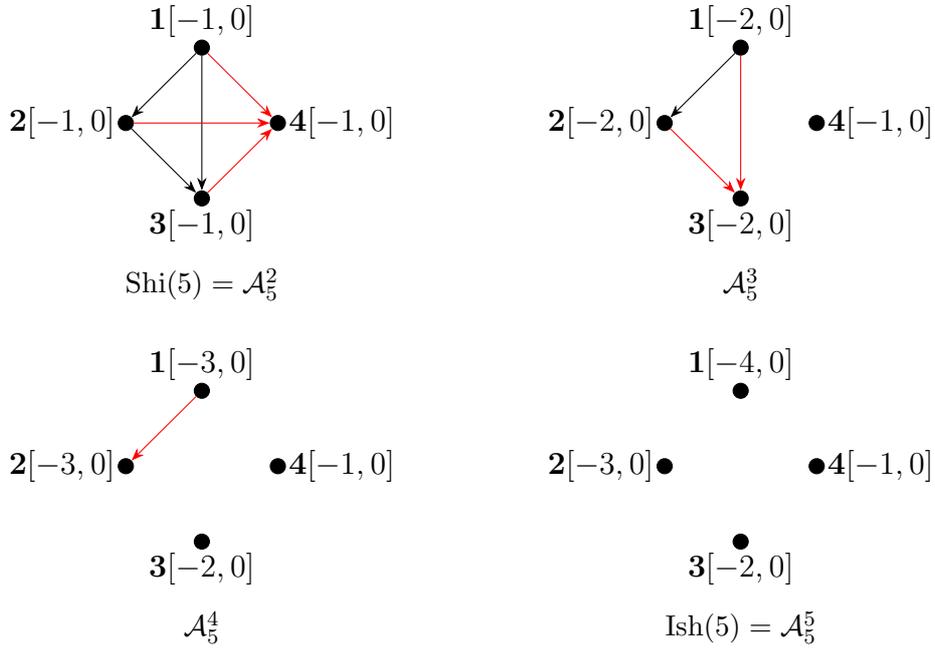
\begin{figure}[htbp]
\centering
\begin{subfigure}{.45\textwidth}
  \centering
\begin{tikzpicture}[scale=1]
\draw (0,3) node[v](1){} node[above]{$ \textbf{1}[-1,0] $};
\draw (-1,2) node[v](2){} node[left]{$ \textbf{2}[-1,0] $};
\draw (0,1) node[v](3){} node[below]{$ \textbf{3}[-1,0] $};
\draw (1,2) node[v](4){} node[right]{$ \textbf{4}[-1,0] $};
\draw[>=Stealth,->] (1)--(2);
\draw[>=Stealth,->] (1)--(3);
\draw[>=Stealth,->,red] (1)--(4);
\draw[>=Stealth,->] (2)--(3);
\draw[>=Stealth,->,red] (2)--(4);
\draw[>=Stealth,->,red] (3)--(4);
\end{tikzpicture}
  \caption*{$ \Shi(5) = \mathcal{A}_{5}^{2} $}
\end{subfigure}%
\begin{subfigure}{.45\textwidth}
  \centering
\begin{tikzpicture}[scale=1]
\draw (0,3) node[v](1){} node[above]{$\textbf{1} [-2,0] $};
\draw (-1,2) node[v](2){} node[left]{$ \textbf{2}[-2,0] $};
\draw (0,1) node[v](3){} node[below]{$ \textbf{3} [-2,0] $};
\draw (1,2) node[v](4){} node[right]{$ \textbf{4} [-1,0] $};
 \draw[>=Stealth,->] (1)--(2);
\draw[>=Stealth,->,red] (1)--(3);
\draw[>=Stealth,->,red] (2)--(3);
\end{tikzpicture}
  \caption*{$ \mathcal{A}_{5}^{3} $}
\end{subfigure}%

\bigskip
\begin{subfigure}{.45\textwidth}
  \centering
\begin{tikzpicture}[scale=1]
\draw (0,3) node[v](1){} node[above]{$\textbf{1} [-3,0] $};
\draw (-1,2) node[v](2){} node[left]{$\textbf{2} [-3,0] $};
\draw (0,1) node[v](3){} node[below]{$ \textbf{3} [-2,0] $};
\draw (1,2) node[v](4){} node[right]{$ \textbf{4} [-1,0] $};
\draw[>=Stealth,->,red] (1)--(2);
\end{tikzpicture}
  \caption*{$ \mathcal{A}_{5}^{4} $}
\end{subfigure}%
\begin{subfigure}{.45\textwidth}
  \centering
\begin{tikzpicture}[scale=1]
\draw (0,3) node[v](1){} node[above]{$\textbf{1} [-4,0] $};
\draw (-1,2) node[v](2){} node[left]{$\textbf{2} [-3,0] $};
\draw (0,1) node[v](3){} node[below]{$ \textbf{3} [-2,0] $};
\draw (1,2) node[v](4){} node[right]{$ \textbf{4} [-1,0] $};
\end{tikzpicture}
  \caption*{$ \Ish(5) = \mathcal{A}_{5}^{5} $}
\end{subfigure}
\caption{$(k,5)$-Shi-Ish arrangements $ \mathcal{A}_{5}^{k}$ $(2 \le k \le 5)$. Here a vertex is in bold symbol placed next to its weight.}
\label{fig:(k,5)}
\end{figure}

\section{Stability  of freeness}
\label{sec:freeness-stable}
In this section we prove our second main result that under suitable conditions, the CEO and KEO preserve the  freeness of the $ \psi $-digraphical arrangements. 

\begin{theorem}[Stability of freeness]
\label{thm:freeness-stable}
Let $(G,\psi)$ be a vertex-weighted digraph on $ [\ell] $ and let $v$ be a vertex in $G$.
\begin{enumerate}[(1)]
\item Assume that $v $ is a coking in $ G $. 
Let $ (G^{\prime}, \psi^{\prime}) $ be the graph obtained from $(G,\psi)$ by applying the CEO w.r.t. the coking $v$. 
Suppose that Conditions \ref{def:conditions}(\tbf{C}) and  \ref{def:conditions}(\tbf{Z}) are satisfied. 
Then $\cc \mathcal{A}(G,\psi) $ is free if and only if $\cc \mathcal{A}(G^{\prime},\psi^{\prime}) $ is free. 
\item Assume that $ v $ is a king in $ G $. 
Let $ (G'', \psi'') $ be the graph obtained from $(G,\psi)$ by applying the KEO w.r.t. the king $v$. 
Suppose that Conditions \ref{def:conditions}(\tbf{K}) and  \ref{def:conditions}(\tbf{Z}) are satisfied. 
Then $\cc \mathcal{A}(G,\psi) $ is free if and only if $\cc \mathcal{A}(G'',\psi'') $ is free. 
\end{enumerate}
 \end{theorem}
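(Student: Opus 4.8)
The plan is to reduce the statement to part (1) (the coking case) via the converse duality $\mathcal{A}(G,\psi) \aff \mathcal{A}(G^{\mathrm{conv}}, -\psi)$ established in Remark \ref{rem:interchange}, which interchanges kings and cokings and swaps Conditions (\tbf{C}) and (\tbf{K}) while fixing the self-dual Condition (\tbf{Z}); since freeness is preserved under linear equivalence and coning commutes with it, this handles part (2) for free once (1) is proved. For part (1), the heart of the matter is an \emph{addition-deletion} type argument relating $\cc\mathcal{A}(G,\psi)$ and $\cc\mathcal{A}(G',\psi')$. These two cones are not directly comparable by removing a single hyperplane, so I would look for a common arrangement $\cc\mathcal{B}$ — obtained from $(G,\psi)$ by deleting exactly those hyperplanes that differ under the CEO, i.e.\ the edge-hyperplanes $x_i - x_v = z$ for $i \neq v$ and the boundary hyperplanes $x_i = (a_i-1)z$ — and compare each of $\cc\mathcal{A}(G,\psi)$ and $\cc\mathcal{A}(G',\psi')$ to $\cc\mathcal{B}$ through a sequence of single-hyperplane additions.

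The key technical device I expect to use is Terao's Addition-Deletion Theorem together with its division/restriction refinements. Concretely, for a triple $(\mathcal{A}, \mathcal{A}\setminus\{H\}, \mathcal{A}^H)$, freeness of any two of the three (with compatible exponents) forces the third. The plan is to order the differing hyperplanes and add them one at a time, at each step computing the restriction $\cc\mathcal{A}^H$ and checking that it is free with the exponents predicted by the already-established stability of characteristic polynomials (Theorem \ref{coincidence characteristic polynomials}(1)). Here Condition (\tbf{Z}) is precisely what controls the combinatorics of these restrictions: it bounds the sizes $|\psi(i)|$ so that the restricted arrangements have a recognizable structure (again a $\psi$-digraphical arrangement on fewer vertices, or a near-pencil in low corank) whose freeness can be settled inductively. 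The matching of exponents is supplied for free by Theorem \ref{coincidence characteristic polynomials}: since the characteristic polynomials of $\cc\mathcal{A}(G,\psi)$ and $\cc\mathcal{A}(G',\psi')$ already agree, Terao's Factorization Theorem (Theorem \ref{thm:Factorization}) guarantees the candidate exponents are consistent, so only the \emph{freeness} of the intermediate restrictions remains to be checked.

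The main obstacle will be analyzing the restriction arrangements $\cc\mathcal{A}(G,\psi)^H$ as $H$ ranges over the differing hyperplanes. Restricting to $x_i = x_v$ or to an edge/boundary hyperplane collapses two coordinates or pins one down, and the resulting induced weighted digraph must be identified carefully: Condition (\tbf{Z}) is engineered so that after restriction the weights $\psi(i)$ still satisfy the hypotheses needed to re-apply the theorem inductively, but verifying this requires a case analysis on whether $|\psi(v)| = n_0+2$ or $n_0+3$ and the corresponding ranges for the other vertices. I anticipate that the delicate step is to show that at each stage the triple $(\mathcal{A}, \mathcal{A}\setminus\{H\}, \mathcal{A}^H)$ is indeed an \emph{addition-deletion triple} — that the three exponent multisets interlock correctly — rather than merely that the restriction is free; this is where the precise numerology of Condition (\tbf{Z}), giving $|\psi(v)|$ and $|\psi(i)|$ differing by at most one, does the work of forcing the exponents to line up. Once the intermediate restrictions are shown free with the correct exponents, repeated application of Addition-Deletion propagates freeness in both directions between $\cc\mathcal{A}(G,\psi)$ and $\cc\mathcal{A}(G',\psi')$, completing the equivalence.
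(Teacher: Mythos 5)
Your reduction of part (2) to part (1) via the converse digraph is exactly right and matches the paper. The core of your plan for part (1), however, has a genuine gap. You propose to connect $\cc\mathcal{A}(G,\psi)$ and $\cc\mathcal{A}(G^{\prime},\psi^{\prime})$ through a common subarrangement $\cc\mathcal{B}$ by a chain of single-hyperplane deletions and additions, transferring freeness with Terao's Addition--Deletion Theorem. That theorem only moves freeness along such a chain if \emph{every} intermediate arrangement is free with exponents interlocking with those of the corresponding restriction at each step, and nothing in your argument establishes this. It is false in general: the intermediates are cones over arrangements consisting of $\Cox(\ell)$, a \emph{partial} set of the edge-hyperplanes $x_i-x_j=1$, and some weight hyperplanes, and such partial configurations need not be free --- indeed the freeness of $\cc\mathcal{A}(G,\varnothing)$ holds only for very restricted digraphs $G$ (Remark \ref{rem:cat}). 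Their characteristic polynomials need not even factor over $\mathbb{Z}$. The agreement of the characteristic polynomials of the two \emph{endpoints} (Theorem \ref{coincidence characteristic polynomials}) says nothing about the intermediates, so the step where you claim the exponents line up for free does not go through; and once a single intermediate deletion fails to be free, Addition--Deletion gives no information in either direction.

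The paper avoids intermediates entirely. By Theorem \ref{thm:Yoshinaga's criterion}, freeness of the cone is equivalent to the conjunction of (i) freeness of the Ziegler restriction onto $H_{\infty}$ and (ii) local freeness in codimension three along $H_{\infty}$. Each of these is then shown separately to be invariant under CEO/KEO: (i) by identifying the Ziegler restriction with a multi-braid arrangement and invoking the signed-eliminability criterion of Theorem \ref{thm:ANN} (Lemma \ref{coincidence ziegler restrictions}; here Condition \ref{def:conditions}(\tbf{Z}) is what keeps the multiplicities within the admissible range $2+n_0+\epsilon$ with $\epsilon\in[-1,1]$), and (ii) by an exhaustive analysis of the rank-$3$ localizations containing $H_{\infty}$, reduced to two-vertex weighted digraphs (Lemma \ref{coincidence local freeness}; here Condition \ref{def:conditions}(\tbf{C}) drives the matching of the freeness criteria before and after the operation). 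If you wish to pursue an addition--deletion proof you would have to exhibit an explicit ordering of the differing hyperplanes making every intermediate free, which is not available for these arrangements.
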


The proof of Theorem \ref{thm:freeness-stable} requires a careful analysis of the freeness of codimension-$3$ localizations and of the Ziegler restrictions of $\cc \mathcal{A}(G,\psi) $. Let us briefly recall these concepts and hint the key ingredient in the proof. 

A \tbf{multiarrangement} is a pair $(\A, m)$ where $\A$ is an arrangement  and $m$ is a map $m : \A \to \Z_{\ge0}$, called \tbf{multiplicity} map. 
Let $\A$ be a central arrangement  in $\K^\ell$ and let $(\A, m)$ be a multiarrangement.
The defining polynomial $Q(\A,m)$ of $(\A, m)$ is given by 
$$Q(\A,m)\coloneqq \prod_{H \in \A} \alpha^{m(H)}_H \in S= \mathbb{K}[x_{1}, \dots, x_{\ell}].$$
When $m(H) = 1$ for every $H \in \A$, $(\A,m)$ is simply a hyperplane arrangement. 
The \tbf{module $D(\A,m)$ of logarithmic derivations} of  $(\A, m)$ is defined by
$$D(\A,m)\coloneqq  \{ \theta\in \Der(S) \mid \theta(\alpha_H) \in \alpha^{m(H)}_HS \mbox{ for all } H \in \A\}.$$

We say that $(\A, m)$  is \tbf{free}  with the multiset $ \exp(\A, m) = \{d_{1}, \dots, d_{\ell}\} $ of \textbf{exponents}  if $D(\A,m)$ is a free $S$-module with a homogeneous basis $ \{\theta_{1}, \dots, \theta_{\ell}\}$  such that $ \deg \theta_{i} = d_{i} $ for each $ i $. 
It is known that  $(\A, m)$ is always free for $\ell\le 2$ \cite[Corollary 7]{Z89}.

Let $H \in \A$. The \tbf{Ziegler restriction} $(\A^H,m^H)$ of $\A$ onto $H$ is a multiarrangement defined by $m^H(X):=|\A_X|-1$ for $X \in \A^H$. 
We say that $\A$ is \tbf{locally free in codimension three along $H$} if $\A_X$ is free for all $X\in L(\A)$
with $X\subseteq H$ and $\codim(X)=3$. 

The key ingredient is a characterization for freeness, one direction is a classical result of Ziegler, the reverse direction is due to Yoshinaga and the first author.
\begin{theorem}[{\cite[Theorem 2.2]{Yo04}}, {\cite[Theorem 4.1]{AY13}}, {\cite[Theorem 11]{Z89}}]
\label{thm:Yoshinaga's criterion}
Let $\A$ be a central arrangement in $\K^\ell$ with $\ell \ge 3$ and let $H\in \A$. Then
$\A$ is free  with $\exp(\A) = \{1, d_2,\ldots, d_\ell\}$ if and only if the Ziegler restriction  $(\A^H,m^H)$ is free  with $\exp (\A^H, m^H)=\{d_2,\ldots, d_\ell\}$ and $\A$ is  locally free in codimension three along $H$.
\end{theorem}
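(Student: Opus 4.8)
The plan is to prove the two implications separately, since the forward direction is a packaging of classical facts while the backward direction is the substantive Yoshinaga--Abe--Yoshinaga criterion.

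For the forward direction, assume $\A$ is free with $\exp(\A) = \{1, d_2, \ldots, d_\ell\}$, where the exponent $1$ is realized by the Euler derivation $\theta_E$, which always lies in $D(\A)$. First I would invoke Ziegler's theorem \cite{Z89}: the Ziegler restriction of a free arrangement is free, and a homogeneous basis $\theta_E, \theta_2, \ldots, \theta_\ell$ of $D(\A)$ adapted to $H$ descends, upon reduction modulo $\alpha_H$, to a basis of $D(\A^H, m^H)$ given by the images of $\theta_2, \ldots, \theta_\ell$; the Euler direction drops out, so $\exp(\A^H, m^H) = \{d_2, \ldots, d_\ell\}$. For the remaining condition I would simply apply Theorem \ref{thm:localization-ss-free}: freeness of $\A$ descends to every localization $\A_X$, in particular to those $X \subseteq H$ with $\codim(X) = 3$. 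Hence $\A$ is locally free in codimension three along $H$.

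The backward direction is where the real work lies. Assume $(\A^H, m^H)$ is free with $\exp(\A^H, m^H) = \{d_2, \ldots, d_\ell\}$ and that $\A$ is locally free in codimension three along $H$; the goal is to produce a homogeneous basis of $D(\A)$ of degrees $1, d_2, \ldots, d_\ell$. The approach is sheaf-theoretic: passing to the quotient $D_0(\A) = D(\A)/S\theta_E$ and sheafifying over $\mathbb{P}^{\ell-1}$ yields the logarithmic derivation sheaf $\mathcal{D}$, and $\A$ is free with the prescribed exponents precisely when $\mathcal{D}$ splits as $\bigoplus_{i=2}^{\ell}\mathcal{O}_{\mathbb{P}^{\ell-1}}(-d_i)$. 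The module $D(\A)$ is always reflexive, so $\mathcal{D}$ is a reflexive sheaf and is therefore automatically a vector bundle away from a closed locus of codimension at least three in $\mathbb{P}^{\ell-1}$; since codimension is preserved under the cone correspondence between affine flats and projective subspaces, the hypothesis that $\A_X$ is free for all codimension-three flats $X \subseteq H$ is exactly what removes the obstruction to local freeness along the critical locus. The core of the argument is then a Horrocks-type splitting criterion applied after restriction to the projective hyperplane $\overline{H}$ determined by $H$: the restriction $\mathcal{D}|_{\overline{H}}$ is governed by the Ziegler restriction $(\A^H, m^H)$, whose freeness guarantees that $\mathcal{D}|_{\overline{H}}$ splits with the correct splitting type $(d_2, \ldots, d_\ell)$. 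One then needs a local-global comparison for the second coefficient of the characteristic polynomial, expressing $b_2(\A) - e_2(d_2, \ldots, d_\ell)$ as a sum of nonnegative local contributions indexed by the codimension-three flats, each vanishing exactly when the corresponding localization is free (cf. \cite{AY13}). Local freeness in codimension three forces every such contribution to vanish, supplying the numerical match that upgrades the hyperplane splitting of $\mathcal{D}|_{\overline{H}}$ to a splitting of $\mathcal{D}$ itself; restoring the Euler summand then gives $\exp(\A) = \{1, d_2, \ldots, d_\ell\}$. For $\ell = 3$ this collapses to Yoshinaga's original criterion \cite{Yo04}, where $(\A^H, m^H)$ lives on a two-dimensional space and is automatically free, and the codimension-three input is carried by the single deepest flat.

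The main obstacle, and the genuinely hard input I would rely on rather than reprove, is precisely this local-global step: establishing that the obstruction to globalizing the splitting of $\mathcal{D}|_{\overline{H}}$ is concentrated in codimension three, is nonnegative, and vanishes term by term under local freeness. This is the technical heart of \cite{Yo04, AY13}; once it is in hand, both directions assemble routinely, with the reflexivity of $D(\A)$, Ziegler's theorem, and Terao's Factorization Theorem (Theorem \ref{thm:Factorization}) serving as the supporting pillars.
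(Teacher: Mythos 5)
The first thing to note is that the paper contains no proof of this statement: it is imported as a known result, with the three citations (Ziegler for the forward direction, Yoshinaga and Abe--Yoshinaga for the converse) carrying the entire burden, and it is then used as a black box in the proof of Theorem \ref{thm:freeness-stable}. So there is no in-paper argument to compare yours against; what can be assessed is whether your outline faithfully reflects how the cited literature establishes the result, and it largely does. Your forward direction is correct and complete at the level of detail given: the Euler derivation realizes the exponent $1$, Ziegler's theorem \cite{Z89} produces a basis of $D(\A^H,m^H)$ from the images of $\theta_2,\dots,\theta_\ell$, and Theorem \ref{thm:localization-ss-free} gives local freeness in codimension three. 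Your backward direction correctly identifies the actual architecture of \cite{Yo04,AY13}: sheafify $D(\A)/S\theta_E$ on $\mathbb{P}^{\ell-1}$, use reflexivity, relate the restriction to $\overline{H}$ to the Ziegler restriction, and close the gap between the hyperplane splitting and a global splitting via the nonnegative local-global decomposition of the second coefficient of the characteristic polynomial, with local freeness in codimension three forcing each local term to vanish. Since you explicitly defer this technical core to \cite{Yo04,AY13} rather than reprove it, your write-up is, in effect, a more detailed version of exactly what the paper does, and I see no substantive error in it.

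Two small imprecisions are worth flagging. First, your claim that the codimension-three hypothesis ``removes the obstruction to local freeness along the critical locus'' overstates what it gives: it makes the sheaf locally free at points of the codimension-three strata inside $\overline{H}$, but says nothing directly about deeper strata, which is precisely why the $b_2$ local-global argument of \cite{AY13} is needed rather than a bare Horrocks-type restriction criterion. Second, your remark about $\ell = 3$ is off: with the paper's definition, a codimension-three flat $X \subseteq H$ in an essential rank-$3$ arrangement is the center itself, so ``locally free in codimension three along $H$'' already contains the freeness of $\A$, and the $\ell=3$ case of the equivalence is settled by Ziegler's theorem alone (together with uniqueness of exponents); it does not collapse to the genuinely nontrivial three-dimensional criterion of Theorem \ref{thm:Yo-3arr}, which is a different statement involving the characteristic polynomial. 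Neither point damages the proposal, since both are subsumed in the step you cite.
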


Subsections  \ref{subsec:Ziegler Restrictions} and  \ref{subsec:Local Freeness}  below are devoted to showing that subject to the indicated conditions the freeness of Ziegler restrictions  and the local freeness are stable under CEO and KEO. 
The proof of Theorem \ref{thm:freeness-stable} will be presented in Subsection \ref{subsec:freeness-stable}.

\subsection{Freeness of Ziegler restrictions}
\label{subsec:Ziegler Restrictions}

Our goal in this subsection is to prove the following lemma.

\begin{lemma}[Stability of freeness of Ziegler restrictions]
\label{coincidence ziegler restrictions}
Let $(G,\psi)$ be a vertex-weighted digraph on $ [\ell] $ and let $v$ be a vertex in $G$. 
Let $ \mathcal{M} $ denote the Ziegler restriction onto the infinite hyperplane of the cone  $\cc \mathcal{A}(G,\psi) $.
Suppose that Condition \ref{def:conditions}(\tbf{Z}) is satisfied. 

\begin{enumerate}[(1)]
\item Assume that $v $ is a coking in $ G $. 
Let $ (G^{\prime}, \psi^{\prime}) $ be the graph obtained from $(G,\psi)$ by applying the CEO w.r.t. the coking $v$. 
Let $ \mathcal{M}^{\prime} $ denote the Ziegler restriction  onto the infinite hyperplane of the cone $\cc \mathcal{A}(G^{\prime}, \psi^{\prime}) $.
Then $ \mathcal{M} $ is free if and only if $ \mathcal{M}^{\prime} $ is free. 
\item Assume that $ v $ is a king in $ G $. 
Let $ (G'', \psi'') $ be the graph obtained from $(G,\psi)$ by applying the KEO w.r.t. the king $v$. 
Let $ \mathcal{M}''$ denote the Ziegler restriction  onto the infinite hyperplane of the cone $\cc \mathcal{A}(G'', \psi'') $.
Then $ \mathcal{M} $ is free if and only if $ \mathcal{M}'' $ is free. 
\end{enumerate}
\end{lemma}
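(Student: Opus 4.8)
By the converse symmetry $\mathcal{A}(G,\psi) \aff \mathcal{A}(G^{\mathrm{conv}}, -\psi)$ of Remark \ref{rem:interchange}, which interchanges kings with cokings and turns the KEO into the CEO, it suffices to treat part (1). The first thing I would do is make both multiarrangements completely explicit. Restricting $\cc\mathcal{A}(G,\psi)$ onto $H_\infty=\{z=0\}$ collapses each $x_i-x_j=z$ onto $x_i-x_j=0$ and each $x_i=cz$ onto $x_i=0$, so the support of $\mathcal{M}$ is $\{x_i=0\}\cup\{x_i-x_j=0\mid i<j\}$ in $\mathbb{R}^\ell$; after the linear identification $y_0=0$, $y_i=x_i$ this is the (essentialized) braid arrangement $\Cox(\ell+1)$ on the vertex set $\{0,1,\dots,\ell\}$ with apex $0$. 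A direct count of localizations gives the Ziegler multiplicities: the apex edge $\{0,i\}$ carries $|\psi(i)|$, while the edge $\{i,j\}$ with $i,j\ge 1$ carries $1+|\{(i,j),(j,i)\}\cap E_G|$. Thus $\mathcal{M}$ is a multi-braid arrangement, and the same holds for $\mathcal{M}'$.

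Reading off Definition \ref{deformation coking}, the CEO w.r.t.\ the coking $v$ transforms $\mathcal{M}$ into $\mathcal{M}'$ by raising the multiplicity of each apex edge $\{0,i\}$ by one and lowering that of each $\{i,v\}$ by one (for $i\in[\ell]\setminus\{v\}$), while leaving $\{0,v\}$ and every edge disjoint from $v$ fixed; within the triangle $\{0,i,v\}$ this is a balanced transfer of one unit from $\{i,v\}$ to $\{0,i\}$, so in particular $\deg Q(\mathcal{M})=\deg Q(\mathcal{M}')$. The plan is to prove the freeness equivalence by induction on $\ell$, the base case $\ell\le 2$ being automatic since every multiarrangement of rank at most two is free. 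The engine of the induction is the addition--deletion theorem for multiarrangements, applied along a hyperplane $H_0$ that the operation leaves unchanged (for instance a braid edge $\{i,j\}$ with $i,j\neq v$, or the apex edge $\{0,v\}$). For such an $H_0$, the Euler restriction of $\mathcal{M}$ onto $H_0$ is again a Ziegler-type multi-braid arrangement, now on a digraph with one fewer vertex; I would arrange for the Euler restrictions of $\mathcal{M}$ and of $\mathcal{M}'$ to be related by a CEO of the very same kind, so that the induction hypothesis applies, and then running addition--deletion on $\mathcal{M}$ and $\mathcal{M}'$ simultaneously (with matching restrictions) forces the equivalence ``$\mathcal{M}$ free $\Leftrightarrow$ $\mathcal{M}'$ free''.

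Condition \ref{def:conditions}(\textbf{Z}) enters precisely to keep this induction inside a range in which the exponents are pinned down. Translated through the identification above, it says that all apex multiplicities lie in a window $\{n_0+1,\dots,n_0+3\}$ of width three, with $|\psi(v)|\in\{n_0+2,n_0+3\}$, while the braid multiplicities always lie in $\{1,2,3\}$; this width-three control is exactly what lets one compute the Euler multiplicity along $H_0$ and verify the exponent-compatibility hypothesis of the addition--deletion theorem at every stage. I expect the main obstacle to be that the passage $\mathcal{M}\to\mathcal{M}'$ alters many hyperplanes at once, whereas freeness of multiarrangements is \emph{not} monotone in the multiplicity, so one cannot naively interpolate by arbitrary single-unit moves and hope to stay within free multiarrangements. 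The delicate work is therefore the careful choice of the fixed hyperplane $H_0$ together with the verification that the two Euler restrictions genuinely differ by a smaller instance of the same CEO (thereby closing the induction); and the single truly non-combinatorial ingredient, the computation of the Euler multiplicity, is exactly the place where Condition \ref{def:conditions}(\textbf{Z}) must be used in full.
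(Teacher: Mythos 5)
Your translation of the problem is accurate and agrees with the paper's: after restriction to $H_{\infty}$ and a change of coordinates, both $\mathcal{M}\times\Phi_{1}$ and $\mathcal{M}^{\prime}\times\Phi_{1}$ become multiplicity versions of $\Cox([0,\ell])$ with an apex vertex $0$, the apex edge $\{0,i\}$ carrying $|\psi(i)|$ and the edge $\{i,j\}$ carrying $1+|\{(i,j),(j,i)\}\cap E_{G}|$, and the CEO acts as a transfer of one unit from each $\{i,v\}$ to each $\{0,i\}$. Your reading of Condition \ref{def:conditions}(\tbf{Z}) as forcing every multiplicity into a width-three window around $2$ (resp.\ $2+n_{0}$) is also exactly the role it plays.

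The gap is in the engine of your induction. The addition--deletion theorem for multiarrangements is a ``two of three'' statement relating a multiarrangement to \emph{its own} deletion (multiplicity lowered by one on a single hyperplane) and to its Euler restriction; it does not yield the implication ``the Euler restrictions of $\mathcal{M}$ and $\mathcal{M}^{\prime}$ onto a common $H_{0}$ are simultaneously free $\Rightarrow$ $\mathcal{M}$ and $\mathcal{M}^{\prime}$ are simultaneously free.'' Freeness of a multiarrangement is not determined by the freeness of one of its restrictions, and since $\mathcal{M}$ and $\mathcal{M}^{\prime}$ differ on $2(\ell-1)$ hyperplanes, connecting them by single-unit deletions would require exactly the interpolation through intermediate multiplicities that you yourself observe cannot be controlled. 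So the inductive step does not close. The paper sidesteps this entirely by invoking the Abe--Nuida--Numata classification (Theorem \ref{thm:ANN}): under (\tbf{Z}) both $\mathcal{M}$ and $\mathcal{M}^{\prime}$ are multi-braid arrangements $(\Cox([0,\ell]),m_{1,\boldsymbol{n},\Gamma})$, whose freeness is \emph{equivalent} to signed-eliminability of $\Gamma$ and in particular independent of $\boldsymbol{n}$; the two signed graphs $\Gamma(\epsilon)$ and $\Gamma(\epsilon^{\prime})$ are then simultaneously signed-eliminable because $(\Cox([0,\ell]),m_{1,(0,\dots,0),\Gamma(\epsilon)})$ and $(\Cox([0,\ell]),m_{1,\boldsymbol{e}_{v},\Gamma(\epsilon^{\prime})})$ are literally the same multiarrangement (Proposition \ref{prop:epsilon'}). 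Some such global characterization is what your argument is missing; without it, the addition--deletion route does not prove the equivalence.
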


As we will see, the Ziegler restrictions we are concerned with are related to special multiarrangements arising from signed graphs. 
A \textbf{signed graph} (e.g., \cite{Ha53}) is a triple $ \Gamma = (V_{\Gamma}, E_{\Gamma}^{+}, E_{\Gamma}^{-}) $, where 
$ V_{\Gamma} $ is a finite set, and $ E_{\Gamma}^{+} $ and $ E_{\Gamma}^{-} $ are disjoint subsets of the set of all unordered pairs of $ V_{\Gamma} $. 
The  \textbf{signature} $\epsilon= \epsilon_{\Gamma} $ of a signed graph $\Gamma$ is the map $ \epsilon \colon \binom{V_{\Gamma}}{2} \to [-1, 1] $ defined by 
$$
\epsilon(i,j) \coloneqq \begin{cases}
1 & \mbox{if } \{v_i,v_j\} \in E_{\Gamma}^{+}, \\
-1 & \mbox{if } \{v_i,v_j\}\in E_{\Gamma}^{-}, \\
0 & \text{otherwise},
\end{cases}
$$
where $ \{v_i,v_j\}$ denotes the undirected edge between $v_i$ and $v_j$.
Conversely, any map $ \epsilon \colon \binom{V}{2} \to [-1, 1] $ where $V$ is a finite set defines a signed graph $ \Gamma(\epsilon) $ whose signature is  $ \epsilon $ itself. 

A signed graph $ \Gamma = ([\ell], E_{\Gamma}^{+}, E_{\Gamma}^{-}) $ is \tbf{signed-eliminable} \cite[Definition 0.2]{ANN09} if for every three vertices $i,j,k$ with $i<k,j<k$, the induced subgraph $G[\{i,j,k\}]$ satisfies the following conditions.
\begin{enumerate}[(i)]
 \item  For $\sigma \in \{+,-\}$, if $\{i,k\} \in E_{\Gamma}^\sigma$ and $\{j,k\} \in E_{\Gamma}^\sigma$, then $\{i,j\} \in E_{\Gamma}^\sigma$.
  \item  For $\sigma \in \{+,-\}$, if $\{k,i\} \in E_{\Gamma}^\sigma$ and $\{i,j\} \in E_{\Gamma}^{-\sigma}$, then $\{k,j\} \in E_{\Gamma}^{-\sigma}$.
 \end{enumerate}
 
\begin{definition}
Let $ \Gamma $ be a signed graph on $V_{\Gamma}= [\ell] $. 
Let $ k $ be a positive integer and let $ \boldsymbol{n}=(n_{1}, \dots, n_{\ell}) $ be an $\ell$-tuple of non-negative integers. 
Let $\K$ be a field of characteristic zero.
Define a multiarrangement $(\A,m)$\footnote{This multiarrangement is a special case of the \tbf{multi-braid arrangement} defined in  \cite{ANN09}.}
 in $\K^\ell$ with $\A=\Cox(\ell)$ and $m=m_{k, \boldsymbol{n},\Gamma}$ where $ m(x_{i}-x_{j}=0) \coloneqq 2k+n_{i}+n_{j}+\epsilon_{\Gamma} (i,j) $ for  $ 1 \leq i < j \leq \ell $. 
\end{definition}

\begin{theorem}[{\cite[Theorem 0.3]{ANN09}}]
\label{thm:ANN}
The multiarrangement $(\Cox(\ell),m_{k, \boldsymbol{n},\Gamma})$ is free if and only if $ \Gamma $ is signed-eliminable. 
\end{theorem}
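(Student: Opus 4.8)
The plan is to prove both directions by induction on the number of vertices $\ell$, using a signed analogue of a perfect elimination ordering together with addition--deletion for multiarrangements. First I would reformulate signed-eliminability: conditions (i) and (ii) imposed on all triples are equivalent to the existence of an ordering of $[\ell]$ in which, whenever a vertex $v$ is the largest one remaining, its lower neighbourhood decomposes into a positive clique and a negative clique glued together by the sign rules of (i)--(ii); call such a $v$ \emph{signed-simplicial}. The base cases are $\ell\le 2$, where $\Cox(\ell)$ has rank at most one, and the rank-two arrangement $\Cox(3)$; all of these are free by Ziegler's theorem (\cite[Corollary 7]{Z89}), so the genuine content lies in the inductive step and, as I explain below, in the rank-three localizations.

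For the implication ``signed-eliminable $\Rightarrow$ free'' I would peel off the largest vertex $v=\ell$. Removing the hyperplanes $x_i-x_\ell=0$ one unit of multiplicity at a time (that is, working down the star of $v$) relates $(\Cox(\ell),m_{k,\boldsymbol{n},\Gamma})$ to $(\Cox(\ell-1),m_{k,\boldsymbol{n}',\Gamma'})$, where $\Gamma'$ is the induced signed subgraph on $[\ell-1]$ and $\boldsymbol{n}'$ absorbs the contributions of the edges at $v$. The crucial point is that the Euler restriction of $(\Cox(\ell),m)$ to $x_{\ell-1}-x_\ell=0$ is again a multiarrangement of the same shape on the merged vertex set, precisely because the signed-simplicial condition at $v$ forces the signs around $v$ to be consistent. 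Since deleting $v$ from a signed-eliminable graph leaves a signed-eliminable graph, the induction hypothesis applies to $(\Cox(\ell-1),m_{k,\boldsymbol{n}',\Gamma'})$, and the addition--deletion theorem for multiarrangements (Abe--Terao--Wakefield) upgrades this to freeness of $(\Cox(\ell),m)$ once the exponents are checked to add up. Throughout I would carry along the explicit exponents predicted by the elimination order, so that the numerology of addition--deletion can be verified at each step.

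For the converse ``free $\Rightarrow$ signed-eliminable'' I would argue by localization. A localization of a free multiarrangement is again free (the derivation-module argument behind Theorem~\ref{thm:localization-ss-free} carries over to multiplicities), and in the partition lattice of $\Cox(\ell)$ the only \emph{irreducible} rank-three flats are the blocks of size four, whose localizations are copies of $(\Cox(4),m|_{\{a,b,c,d\}})$; every other localization of rank at most three is a product of arrangements of rank at most two and is hence automatically free. Thus freeness of $(\Cox(\ell),m)$ forces every four-vertex localization to be free, and it suffices to show that a signed graph on four vertices carrying a non-signed-eliminable triple produces a non-free $(\Cox(4),m)$. This reduces the entire converse to a finite list of rank-three base configurations, each of which I would rule out by Saito's criterion (the determinant of any candidate basis cannot attain the forced degree $\sum_{i<j}m_{ij}$) or, equivalently, by showing that the characteristic polynomial of the multiarrangement fails to factor over $\Z$ with the required roots.

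The step I expect to be the main obstacle is twofold. On the ``if'' side, the delicate part is computing the Euler multiplicity of the restriction to $x_{\ell-1}-x_\ell=0$ and verifying that it remains of the form $m_{k,\boldsymbol{n}',\Gamma'}$, together with the exact exponent bookkeeping needed to feed addition--deletion; an incorrect Euler multiplicity would break the induction. On the ``only if'' side, the obstacle is structural: rank-two multiarrangements are \emph{always} free, so the non-freeness caused by a bad triple can never be detected on three vertices and must be exhibited genuinely at rank three on four vertices---this is exactly where a direct, and somewhat intricate, computation with $(\Cox(4),m)$ is unavoidable and constitutes the combinatorial heart of the theorem.
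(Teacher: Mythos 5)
You should first be aware that the paper does not prove Theorem \ref{thm:ANN} at all: it is imported verbatim from \cite[Theorem 0.3]{ANN09} and used as a black box in the proof of Lemma \ref{coincidence ziegler restrictions}. So there is no internal argument to compare yours against; what follows measures your sketch against what a complete proof of this (quite deep) result requires. Your outline does identify the right toolkit --- addition--deletion and Euler restrictions for multiarrangements, the rank-two exponent formula of Theorem \ref{thm:Wakamiko}, localization, and factorization of the multi-characteristic polynomial --- and your observation that rank-two localizations are always free, so that a bad triple cannot be detected on three vertices, is exactly the right structural point. But both directions have genuine gaps. In the ``if'' direction, deleting the star of the top vertex ``one unit of multiplicity at a time'' immediately leaves the class of multiplicities $m_{k,\boldsymbol{n},\Gamma}$: after subtracting $1$ from a single hyperplane $x_i-x_\ell=0$ the multiplicity is no longer of the prescribed shape, so your induction hypothesis does not apply to any of the intermediate multiarrangements, and the Abe--Terao--Wakefield addition--deletion theorem would demand freeness of the Euler restriction plus an exact exponent match at each of the roughly $2k+n_i+n_\ell$ steps per hyperplane --- none of which is controlled. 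The induction that actually closes up perturbs only the edge terms $\epsilon_{\Gamma}(i,j)\in\{-1,0,1\}$, one edge at a time, staying inside the class and resting on the (separately nontrivial) freeness of the edgeless case $m_{ij}=2k+n_i+n_j$; your scheme would have to be reorganized along those lines before the Euler-multiplicity ``bookkeeping'' you defer could even be set up.

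In the ``only if'' direction, the reduction to four-vertex localizations is legitimate only modulo two assertions you do not establish. First, that localization preserves freeness of \emph{multi}arrangements: this is true, but it is a theorem of Abe--Terao--Wakefield, not a formal consequence of the simple-arrangement statement you cite. Second, and much more seriously, that every non-signed-eliminable signed graph on four vertices produces a non-free $(\Cox(4),m_{k,\boldsymbol{n},\Gamma})$: this \emph{is} the hard content of the converse, not a finite check to be dispatched by Saito's criterion, since the ``finite list'' is really an infinite family parametrized by $k$ and $\boldsymbol{n}$, and the characteristic polynomial of a multiarrangement is itself a nontrivial object to evaluate. The standard way to carry this out is a local-to-global comparison of the degree-two coefficient of the Abe--Terao--Wakefield characteristic polynomial with the sum of the corresponding local quantities over rank-two flats, computed via Theorem \ref{thm:Wakamiko}; a bad triple creates a strict defect in that comparison, contradicting the factorization forced by freeness. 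In short, your proposal is a reasonable map of where the difficulties lie, but the two steps you flag as ``obstacles'' are not verifications to be filled in later --- they are the theorem.
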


\begin{proposition}
\label{prop:epsilon'}
Let $V$ be a finite set and let $v\in V$.
Let $ \epsilon =\epsilon_v $ be a map $ \epsilon \colon \binom{V}{2} \to [-1,1] $ such that $ \epsilon(i,v) \geq 0 $ for all $ i \in V\setminus\{v\} $. 
Let $ \epsilon' =\epsilon'_v $ be another map $ \epsilon' \colon \binom{V}{2} \to [-1,1] $ defined by 
$$
\epsilon^{\prime}(i,j)\coloneqq \begin{cases}
 \epsilon(i,v) -1 &  \mbox{if } \,i \in V\setminus\{v\}, \\
\epsilon(i,j) & \mbox{if }\, i,j \in V\setminus\{v\}. 
\end{cases}
$$
Then $ \Gamma(\epsilon) $ is signed-eliminable if and only if $ \Gamma(\epsilon^{\prime}) $ is signed-eliminable. 
\end{proposition}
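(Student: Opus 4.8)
The plan is to use the fact that signed-eliminability is imposed independently on each unordered triple of vertices, together with the observation that replacing $\epsilon$ by $\epsilon'$ changes the signature only on edges incident to $v$. First I would record that $\epsilon$ and $\epsilon'$ agree on every pair in $V\setminus\{v\}$; consequently, for any triple $\{i,j,k\}$ avoiding $v$ all three of its edges are unchanged, and conditions (i)--(ii) read verbatim the same for $\Gamma(\epsilon)$ and $\Gamma(\epsilon')$. Those triples therefore impose identical obligations on both graphs, and the task reduces to comparing, triple by triple, the obligations coming from triples that contain $v$: it suffices to show that each such triple satisfies (i)--(ii) for $\epsilon$ exactly when it does for $\epsilon'$.

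Next I would decode, for a fixed triple $\{v,a,b\}$, which edges move. The decisive structural input is that $v$ is the least vertex in the linear order underlying the definition of signed-eliminability (as holds in the application, where $v$ is the distinguished vertex of the Ziegler restriction), so $v$ is never the maximal vertex $k$ of a triple; rather, in every triple through $v$ the two edges $\{v,a\}$ and $\{v,b\}$ are the two edges meeting the minimum, both of which the operation decreases by one, while the remaining edge $\{a,b\}$ joining the two larger vertices is left fixed. The hypothesis $\epsilon(i,v)\ge 0$ then says precisely that the two changing edges carry signs in $\{0,+\}$ before the operation, hence in $\{-,0\}$ afterward.

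The heart of the matter is a finite verification. Writing $(s_1,s_2,s_3)$ for the signs of the two edges at $v$ and of the top edge, with $s_1,s_3\in\{0,+\}$ forced by the hypothesis and $s_2\in\{-,0,+\}$ free, there are only twelve sign patterns to inspect. Against the explicit list of admissible patterns obtained by unwinding conditions (i) and (ii) at the maximal vertex, I would check for each of the twelve that $(s_1,s_2,s_3)$ is admissible if and only if the decremented pattern $(s_1-1,\,s_2,\,s_3-1)$ is admissible; these triplewise equivalences, combined with the trivial equivalence for triples avoiding $v$, assemble into the asserted equivalence of signed-eliminability for $\Gamma(\epsilon)$ and $\Gamma(\epsilon')$.

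The step I expect to be the genuine obstacle, and the one where the hypotheses are indispensable, is exactly this case check. Both features enter here in an essential way: that $v$ is minimal, so that the patterns forbidden by condition (i) at a top vertex are neither created nor destroyed asymmetrically by the decrement, and that the signs at $v$ are nonnegative, so that only the transitions $+\mapsto 0$ and $0\mapsto -$ occur. If either were dropped the equivalence would actually fail (a configuration forbidden by (i) for $\epsilon$ can become admissible after decrementing), so the twelve-case inspection is not mere bookkeeping but the precise point at which the statement becomes true. I would organize it by the pair $(s_1,s_3)$, noting that the $\pm$-symmetry of conditions (i)--(ii) is broken by the sign hypothesis and so offers no shortcut.
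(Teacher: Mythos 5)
Your overall strategy — a triple‑by‑triple comparison of conditions (i)–(ii) — is a legitimate alternative to the paper's argument, but as written it has a genuine gap at the point where you fix the vertex ordering. Signed‑eliminability, as it must be read for Theorem \ref{thm:ANN} to hold (and as it is defined in \cite{ANN09}), is the \emph{existence} of a vertex ordering satisfying (i)–(ii); the proposition supplies no ordering and no minimality of $v$. Your justification ``as holds in the application, where $v$ is the distinguished vertex of the Ziegler restriction'' is factually wrong: in the application the signed graph lives on $[0,\ell]$, the vertex contributed by the Ziegler restriction is $0$, and $v$ is a coking of $G$ sitting inside $[\ell]$ — it is not the minimum. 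This matters because the triple‑wise equivalence you rely on genuinely fails for triples in which $v$ is the maximum: with $\epsilon(a,v)=\epsilon(b,v)=+1$ and $\epsilon(a,b)=0$, condition (i) fails at $k=v$ for $\epsilon$ while $\Gamma(\epsilon')$ is edgeless; conversely with $\epsilon\equiv 0$ all conditions hold for $\epsilon$ while $\epsilon'$ puts $\{a,v\},\{b,v\}$ in $E^{-}$ and violates (i) at $k=v$. So to complete your argument you would need the additional (nontrivial, unaddressed) lemma that a signed‑eliminable $\Gamma(\epsilon)$, respectively $\Gamma(\epsilon')$, always admits a signed‑elimination ordering in which $v$ is never the maximum of a triple (e.g.\ $v$ placed first). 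Without that, your case check only proves equivalence of ``admits a signed‑elimination ordering with $v$ minimal,'' which is not the statement. A secondary issue is that the central finite verification is only announced, not carried out; for what it is worth, it does succeed for every triple in which $v$ is not the maximum (the two obligations for $\epsilon$ are precisely the contrapositives of the two obligations for $\epsilon'$), so the ordering issue is the only real obstruction.

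For contrast, the paper sidesteps all of this in one line: with $V=[\ell]$ the multiarrangements $(\Cox(\ell),m_{1,(0,\dots,0),\Gamma(\epsilon)})$ and $(\Cox(\ell),m_{1,\boldsymbol{e}_v,\Gamma(\epsilon')})$ are literally the same multiarrangement (the hypothesis $\epsilon(i,v)\ge 0$ is what makes $\epsilon'$ a valid signature), so Theorem \ref{thm:ANN} applied twice gives that both signed‑eliminability statements are equivalent to the freeness of that single multiarrangement; order‑independence comes for free because freeness does not see the labelling. If you want a purely combinatorial proof along your lines, you must either supply the missing ordering lemma or find a way to compare orderings for $\epsilon$ and $\epsilon'$ that are allowed to differ in the position of $v$.
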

\begin{proof}
Let $V=[\ell]$.
Note that $(\Cox(\ell),m_{1, (0,\dots,0),\Gamma(\epsilon) }) = (\Cox(\ell),m_{1, \boldsymbol{e}_v,\Gamma(\epsilon' )})$, where $ \boldsymbol{e}_v$ denotes the $\ell$-tuple having $1$ at the $v$th entry and $0$ elsewhere.
Apply Theorem \ref{thm:ANN}.
\end{proof}

We are ready to prove Lemma \ref{coincidence ziegler restrictions}. 

\begin{proof}[\tbf{Proof of Lemma \ref{coincidence ziegler restrictions}}]
In view of Remark \ref{rem:interchange}, it suffices to prove (1).
Since Condition \ref{def:conditions}(\tbf{Z}) is satisfied,
there exist $ n_{0}\in\Z_{\ge0} $, $ \tau(v) \in[0,1] $ and $\tau(i) \in [-1,1] $ for $ i \in [\ell]\setminus\{v\}$ such that  $ |\psi(i)| = 2+n_{0}+\tau(i) $.
Define a map $ \epsilon \colon \binom{[0,\ell]}{2} \to [-1,1] $ by 
\begin{align*}
\epsilon(i,j) \coloneqq \begin{cases}
1 & \mbox{if } \, (i,j),(j,i) \in E_{G}, 1 \leq i < j \leq \ell,  \\
-1 & \mbox{if } \, (i,j),(j,i) \not\in E_{G}, 1 \leq i < j \leq \ell,  \\
\tau(i) & \mbox{if } \,j=0, 1 \leq i \leq \ell, \\
0 & \text{otherwise},
\end{cases}
\end{align*}
Note that $\M$ is identical to the multiarrangement  $(\A,m)$ in $\K^\ell$, where $\A=\Cox(\ell) \cup \Set{ x_{i}=0  | 1 \leq i   \leq \ell}$ and the multiplicity $ m$ is given by 
\begin{align*}
m(x_{i}-x_{j}=0) &= 2+\epsilon(i,j) \qquad (1 \leq i<j \leq \ell), \\
m(x_{i}=0) &=  |\psi(i)| = 2+n_{0}+\epsilon(0,i) \qquad (1 \leq i \leq \ell). 
\end{align*}
Thus $ \mathcal{M} \times \Phi_{1}$ is linearly equivalent\footnote{Two multiarrangements $(\A_1,m_1)$ and $(\A_2,m_2)$  in $\K^\ell$ are said to be \textbf{linearly equivalent} if there is an invertible linear affine endomorphism $\varphi: \K^\ell \to \K^\ell$ such that $\A_2=\varphi(\A_1)$ and $m_1(H)=m_2(\varphi(H))$ for all $H \in \A_1$. 
One can also prove that the freeness of multiarrangements is preserved under linear equivalence.}
 to the multiarrangement $ (\Cox( [0,\ell]),m_{1, (n_{0}, 0,\dots,0),\Gamma(\epsilon)})$ via $ x_{0} \mapsto x_{0} $, $ x_{i} \mapsto x_{i}-x_{0} $ $ (1 \leq i \leq \ell )$. 
Note that $ \Gamma(\epsilon) $ is a signed graph on $ [0,\ell] $. 
Similarly, $\M'$ is identical to the multiarrangement  $(\Cox(\ell),m')$ in $\K^\ell$ where the multiplicity $ m'$ is given by 
\begin{align*}
m'(x_{i}-x_{j}=0) &= m(x_{i}-x_{j}=0) \qquad (i,j \in [\ell]\setminus\{v\}), \\
m'(x_{i}-x_{v}=0) &= m(x_{i}-x_v=0)-1 \qquad (i \in [\ell]\setminus\{v\}), \\
m'(x_{i}=0) &=m(x_{i}=0) +1 \qquad (i \in [\ell]\setminus\{v\}),\\
m'(x_{v}=0) &=  m(x_{v}=0) . 
\end{align*}
Remark that $ \epsilon(i,v) \ge 0$ for all $i \in [0,\ell]\setminus\{v\}$.
Hence
$ \mathcal{M}' \times \Phi_{1}$ is linearly equivalent to $(\Cox( [0,\ell]),m_{1, (n_{0}+1, 0,\dots,0),\Gamma(\epsilon')})$ where $ \epsilon^{\prime}  \colon \binom{[0,\ell]}{2} \to [-1,1] $ is defined by 
$$
\epsilon^{\prime}(i,j)\coloneqq \begin{cases}
 \epsilon(i,v) -1 &  \mbox{if } \,i \in [0,\ell]\setminus\{v\}, \\
\epsilon(i,j) & \mbox{if }\, i,j \in [0,\ell]\setminus\{v\}. 
\end{cases}
$$
Apply Theorem \ref{thm:ANN} and Proposition \ref{prop:epsilon'}, we have
\begin{align*}
\mathcal{M} \text{ is free }
&\Leftrightarrow (\Cox( [0,\ell]),m_{1, (n_{0}, 0,\dots,0),\Gamma(\epsilon)}) \text{ is free } \\
&\Leftrightarrow  \Gamma(\epsilon) \text{ is signed-eliminable} \\
&\Leftrightarrow   \Gamma(\epsilon^{\prime})\text{ is signed-eliminable} \\
&\Leftrightarrow  (\Cox( [0,\ell]),m_{1, (n_{0}+1, 0,\dots,0),\Gamma(\epsilon')}) \text{ is free } \\
&\Leftrightarrow \mathcal{M}^{\prime} \text{ is free}. 
\end{align*}
\end{proof}

\subsection{Local freeness}
\label{subsec:Local Freeness}

Our goal in this subsection is to prove the following lemma.
\begin{lemma}[Stability of local freeness]
\label{coincidence local freeness}
Let $(G,\psi)$ be a vertex-weighted digraph on $ [\ell] $ and let $v$ be a vertex in $G$.
\begin{enumerate}[(1)]
\item Assume that $v $ is a coking in $ G $. 
Let $ (G^{\prime}, \psi^{\prime}) $ be the graph obtained from $(G,\psi)$ by applying the CEO w.r.t. the coking $v$. 
Suppose that Condition \ref{def:conditions}(\tbf{C}) is satisfied. 
Then $\cc\mathcal{A}(G,\psi)$ is  locally free in codimension three along the infinite hyperplane if and only if  $ \cc\mathcal{A}(G^{\prime},\psi^{\prime}) $ is  locally free in codimension three along the infinite hyperplane. 

\item Assume that $ v $ is a king in $ G $. 
Let $ (G'', \psi'') $ be the graph obtained from $(G,\psi)$ by applying the KEO w.r.t. the king $v$. 
Suppose that Condition \ref{def:conditions}(\tbf{K}) is satisfied. 
Then $\cc\mathcal{A}(G,\psi)$ is  locally free in codimension three along the infinite hyperplane if and only if  $ \cc\mathcal{A}(G'',\psi'') $ is  locally free in codimension three along the infinite hyperplane. 
\end{enumerate}
\end{lemma}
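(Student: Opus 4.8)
The plan is to verify the statement one codimension-three flat at a time, after the usual reduction. By Remark \ref{rem:interchange} it suffices to treat the coking case (1). The first point is that the CEO does not alter the arrangement cut out on the infinite hyperplane $H_{\infty}=\{z=0\}$: every deleted cone hyperplane $x_i-x_v=z$ restricts on $H_{\infty}$ to the Coxeter hyperplane $x_i-x_v=0$, which is always present, and each weight $\psi(i)$ stays a nonempty interval containing $0$, so no coordinate hyperplane $x_i=0$ disappears. Hence the simple arrangement $\{x_i-x_j=0\}\cup\{x_i=0\}$ induced on $H_{\infty}$ is the same for $(G,\psi)$ and $(G',\psi')$, and the codimension-three flats $X\subseteq H_{\infty}$ are canonically identified. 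It therefore suffices to show, for each such $X$, that $(\cc\mathcal{A}(G,\psi))_X$ is free if and only if $(\cc\mathcal{A}(G',\psi'))_X$ is free; both are central of rank three.

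Each $X$ corresponds to a rank-two flat of $\{x_i-x_j=0\}\cup\{x_i=0\}$, and these come in four shapes: (a) a size-three equality block $x_a=x_b=x_c$; (b) a vanishing pair $x_a=x_b=0$; (c) a coordinate and a disjoint difference, $x_a=0$ and $x_b=x_c$; (d) two disjoint differences $x_a=x_b$ and $x_c=x_d$. In cases (c) and (d) the localization is the union of $H_{\infty}$ with two central pencils, through codimension-two axes $L_1,L_2\subseteq H_{\infty}$. Choosing $L_1$ to be one axis, every hyperplane off $L_1$ lies in the other pencil and any two of them meet in $L_2\subseteq H_{\infty}$, with $H_{\infty}$ passing through $L_1$; so $L_1$ is a modular coatom by Proposition \ref{prop:ss}, the localization is supersolvable, hence free for both arrangements, and the equivalence is automatic. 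In case (a) the localization has no coordinate hyperplane and is the cone of a braid-containing subarrangement of the type-$A_2$ Catalan arrangement; a direct count of its multiple points shows its characteristic polynomial always splits with roots equal to the exponents of its (rank-two, hence free) Ziegler restriction, so it is free for both arrangements, the CEO merely deleting edges into $v$. Thus the whole content is concentrated in case (b).

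For case (b) the localization is $\cc\mathcal{A}(G[\{a,b\}],\psi|_{\{a,b\}})$, and I will use that a rank-three central arrangement is free exactly when its characteristic polynomial factors as $(t-1)(t-d_2)(t-d_3)$ with $(d_2,d_3)$ the exponents of its always-free rank-two Ziegler restriction (the rank-three content of Theorem \ref{thm:Yoshinaga's criterion}, cf.\ \cite{Yo04}). That Ziegler restriction is the three-line multiarrangement with multiplicities $(|\psi(a)|,|\psi(b)|,2+\epsilon(a,b))$, whose exponents are given by the explicit rank-two formula; since it is rank two we need only that these exponents match and never invoke Condition \ref{def:conditions}(\tbf{Z}). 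When $v\in\{a,b\}$ the CEO is exactly the CEO on the two-vertex graph and Condition \ref{def:conditions}(\tbf{C}) is inherited, so the characteristic polynomial is preserved by Theorem \ref{coincidence characteristic polynomials}; moreover the containment $[a_i,b_i-1]\subseteq\psi(v)$ forces $|\psi(a)|\le|\psi(v)|+1$, which is precisely what prevents the simultaneous change $(|\psi(a)|,2+\epsilon)\mapsto(|\psi(a)|+1,1+\epsilon)$ from moving the dominant multiplicity, so the Ziegler exponents are preserved as well and the freeness criterion transfers verbatim. When $v\notin\{a,b\}$ both weights grow by one while the edge data is untouched; here the characteristic polynomial genuinely changes, but a direct computation (extending both intervals to the left raises $|\psi(a)\cap\psi(b)|$ by one) shows that the discriminant of the quadratic factor and the exponent gap $(d_2-d_3)^2$ each remain unchanged, so the equality defining freeness is preserved on both sides simultaneously.

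The main obstacle is the case-(b) bookkeeping: computing the degree-two coefficient of these axis-parallel-plus-diagonal rank-three localizations (equivalently, counting their multiple points) together with the rank-two Ziegler exponents, and then checking that the freeness-defining equality is preserved. The delicate feature is that when $v\in\{a,b\}$ neither the characteristic polynomial nor the exponents are individually preserved unless Condition \ref{def:conditions}(\tbf{C}) holds---indeed without it the CEO can turn a non-free localization into a free one---so Condition \ref{def:conditions}(\tbf{C}) must enter exactly here, through Theorem \ref{coincidence characteristic polynomials} and through the control of the dominant multiplicity; the cases (a), (c), (d), by contrast, are painless.
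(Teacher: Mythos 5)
Your reduction to the coking case, the identification of the codimension-three flats inside $H_{\infty}$ with the four shapes (a)--(d), and the disposal of (c) and (d) by supersolvability all match the paper. Your treatment of the crucial case (b) is a legitimate variant of the paper's argument: instead of the paper's explicit freeness characterizations of $\cc\mathcal{A}(H,\psi)$ for $H\in\{\overline{K^{\ast}_{2}},T_{2},K^{\ast}_{2}\}$ (Propositions \ref{prop:K-Nish}--\ref{prop:K*-psi1<2-2}) followed by the interval case-match (A)/(B) and (C)/(D), you run Theorem \ref{thm:Yo-3arr} directly and track the characteristic polynomial (via Theorem \ref{coincidence characteristic polynomials} when $v\in\{a,b\}$, via a shift $t\mapsto t-1$ when $v\notin\{a,b\}$) together with the Wakamiko exponents; the claims you leave as ``direct computations'' do check out under Condition (\textbf{C}), though they are exactly where the real work lives and would need to be written out.

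There is, however, a genuine gap in case (a). You assert that for a flat of type $x_a=x_b=x_c$ the localization $\cc\mathcal{A}(G[\{a,b,c\}],\varnothing)$ is \emph{always} free because ``its characteristic polynomial always splits with roots equal to the exponents of its Ziegler restriction.'' This is false, and it is precisely the content of Lemma \ref{lem:L2} (Abe's classification, Figure \ref{fig:Type 3}) that only $13$ of the $16$ isomorphism types of $3$-vertex digraphs give free cones. For instance, if $G[\{a,b,c\}]$ is the directed path $a\to b\to c$, the finite field method gives $\overline{\chi}^{\,\mathrm{ess}}(t)=t^{2}-5t+7$, which is irreducible over $\mathbb{Q}$, so the cone is not free by Theorem \ref{thm:Factorization} (the Ziegler exponents here are $\{2,3\}$, and $t^2-5t+7\neq(t-2)(t-3)$); the directed $3$-cycle and one $4$-edge digraph fail similarly. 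Consequently the equivalence at type-(a) flats is not automatic: one must check that when $v\in\{a,b,c\}$ (so that both in-edges $(a,v),(b,v)$ are present and are deleted by the CEO), membership in the free list of Figure \ref{fig:Type 3} is preserved in both directions --- e.g.\ the non-free digraph $K^{\ast}_{3}$ minus $\{(a,b),(b,c)\}$ has $c$ replaced by a coking and is sent by the CEO to another non-free digraph (a directed path), while $K^{\ast}_{3}$ itself is sent to a free one. This finite verification against Lemma \ref{lem:L2} is what the paper supplies and what your argument is missing; without it the proof of the lemma is incomplete.
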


The proof of  Lemma \ref{coincidence local freeness} is a brute-force examination on the freeness of all possible codimension-$3$ localizations containing the infinite hyperplane. 
Let us first give the proof plan.
Let $H_{\infty}$ denote the infinite hyperplane $z=0$.
Let $ X $ be an element in $ L(\mathbf{c}\mathcal{A}(G,\psi)) $ such that $H_{\infty}\supseteq X $. 
The following statements hold. 
\begin{itemize}
\item If $ \{x_{i} = az \} \supseteq X $ for some $ a \in \psi(i) $, then $ \{x_{i} = az \} \supseteq X $ for all $ a \in \psi(i) $. 
\item If $ \{x_{i}=az\} \supseteq X $ for some $ a \in \psi(i) $ and $ \{x_{i}-x_{j} = 0\} \supseteq X $, then $ \{x_{j} = az\} \supseteq X $ for any $ j \in [\ell] $. 
\item If $ \{x_{i}-x_{j} = az\} \supseteq X $ for some $ a \in [-1, 1] $, then $ \{x_{i}-x_{j} = az\} \supseteq X $ for all $ a \in [-1, 1]$. 
\item If $ \{x_{i}-x_{j} = 0\} \supseteq X $ and $ \{x_{j}-x_{k} = 0 \} \supseteq X $ for mutually distinct $ i,j,k \in [\ell] $, then $ \{x_{i}-x_{k} = 0\} \supseteq X $. 
\end{itemize}

Thus any possible localization of rank $ 3 $ of $ \mathbf{c}\mathcal{A}(G,\psi) $ containing $H_{\infty}$ is one of the types listed below. Recall that $ G[W] $ denotes the induced subgraph of $ W \subseteq [\ell] $. 
\begin{enumerate}[(L1)]
\item\label{localization type 1} $ \mathbf{c}\mathcal{A}(G[\{i,j\}],\psi|_{\{i,j\}}) $ for distinct $ i,j \in [\ell] $. (Here the assumption $ 0 \in \psi(i) $ for each $ i \in [\ell] $ is necessary.) 
\item\label{localization type 2} $ \mathbf{c}\mathcal{A}(G[\{i,j,k\}],\varnothing) $ for mutually distinct $ i,j,k \in [\ell] $. 
\item\label{localization type 3} $ \mathbf{c}\mathcal{A}(G[\{i,j\}],\varnothing) \cup \mathbf{c}\mathcal{A}(G[\{k\}],\psi|_{\{k\}}) $ for mutually distinct $ i,j,k \in [\ell] $. 
\item\label{localization type 4} $ \mathbf{c}\mathcal{A}(G[\{i,j\}],\varnothing) \cup \mathbf{c}\mathcal{A}(G[\{u,v\}],\varnothing) $ for mutually distinct $ i,j,u,v \in [\ell] $. 
\end{enumerate}

Our task is to characterize the freeness of the arrangements in the four cases above. 
Note that the arrangements of type (L\ref{localization type 3}) and (L\ref{localization type 4}) are supersolvable hence free automatically. 
The arrangement of type (L\ref{localization type 2}) is the one mentioned in Remark \ref{rem:cat} in the case the underlying digraph has $3$ vertices. 
We give the details below. 
\begin{lemma}[{\cite[Lemma 2.1 and Proposition 2.1]{Abe12}}]
 \label{lem:L2}
Let $ G $ be a digraph on $ 3 $ vertices. 
Then $ \mathbf{c}\mathcal{A}(G,\varnothing) $ is free if and only if $ G $ is isomorphic to one of the digraphs in Figure \ref{fig:Type 3}. 
\end{lemma}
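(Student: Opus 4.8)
The plan is to characterize freeness of $\mathbf{c}\mathcal{A}(G,\varnothing)$ for a digraph $G$ on $3$ vertices by a direct case analysis over all digraphs on $3$ vertices, up to the isomorphism notion relevant here. First I would recall from Example \ref{ex:empty} and the surrounding discussion that $\mathcal{A}(G,\varnothing)$ is the arrangement $\Cox(3)\cup\{x_i-x_j=1\mid (i,j)\in E_G\}$, an intermediate arrangement between $\Cox(3)$ and $\Cat(3)$. Coning introduces the variable $z$ and the infinite hyperplane, so $\mathbf{c}\mathcal{A}(G,\varnothing)$ is a central arrangement in $\K^4$ whose hyperplanes are $z=0$, the three braid hyperplanes $x_i-x_j=0$, and one hyperplane $x_i-x_j=z$ for each directed edge $(i,j)$. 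Since the freeness of $\mathbf{c}\mathcal{A}(G,\varnothing)$ is exactly the $3$-vertex case of the general interpolation between $\Cox(\ell)$ and $\Cat(\ell)$ discussed in Remark \ref{rem:cat}, the cleanest route is to simply \emph{invoke} the cited results \cite[Lemma 2.1 and Proposition 2.1]{Abe12}, which already settle freeness of $\mathcal{A}(G,\varnothing)$ in terms of the structure of $G$; the statement here is just the specialization to three vertices together with the translation of the combinatorial freeness criterion into the explicit list of digraphs in Figure \ref{fig:Type 3}.

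Concretely, I would enumerate the directed edges available on $\{i,j,k\}$: between each unordered pair there are three possibilities (no edge, one of the two orientations, or both orientations), giving finitely many digraphs to inspect, and then group them by the converse symmetry $G\mapsto G^{\mathrm{conv}}$ (Remark \ref{rem:interchange}) and by relabeling of vertices, which reduces the count substantially. For each representative I would compute the characteristic polynomial via the finite field method (Theorem \ref{Crapo--Rota finite field method}) or read off the combinatorial condition from \cite{Abe12}, and compare against the Terao Factorization Theorem (Theorem \ref{thm:Factorization}): a free arrangement must have $\chi$ splitting into integer linear factors, so any $G$ whose associated arrangement has a non-factoring characteristic polynomial is immediately excluded. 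The remaining candidates are then checked to be genuinely free, either by exhibiting an explicit basis of $D(\mathbf{c}\mathcal{A}(G,\varnothing))$ or, more efficiently, by recognizing them as supersolvable via Proposition \ref{prop:ss} and Theorem \ref{thm:exp-ss}, or as products/localizations handled by Propositions \ref{prop:prod-L(A) free} and \ref{prop:modular coatom}.

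The step I expect to be the main obstacle is the boundary case: isolating precisely which digraphs sit on the borderline between free and non-free, since freeness for these interpolating arrangements is \emph{not} simply monotone in the number of edges. The subtlety is that adding a reversed edge (making a pair doubly directed, i.e.\ moving toward $\Cat$) can restore freeness that a single ``diagonal'' edge destroyed, so the correct invariant is the pattern of which pairs are unoriented, singly oriented, or doubly oriented, and how these patterns interact around the triangle. This is exactly the content of the signed-eliminability condition appearing in Theorem \ref{thm:ANN} and Proposition \ref{prop:epsilon'}, where a single directed edge corresponds to signature $0$, a doubly directed edge to $+1$, and a missing edge to $-1$; so I would ultimately phrase the three-vertex obstruction in those terms and verify that the free cases are precisely those whose signed graph $\Gamma(\epsilon)$ on the three vertices is signed-eliminable, which then matches the list drawn in Figure \ref{fig:Type 3}. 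Since the paper already cites \cite{Abe12} for the statement, the honest proof is to defer the full classification to that reference and only confirm that the three-vertex specialization yields exactly the depicted digraphs.
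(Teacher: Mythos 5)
The paper gives no argument for this lemma at all: it is stated with the attribution ``\cite[Lemma 2.1 and Proposition 2.1]{Abe12}'' and the classification in Figure \ref{fig:Type 3} is simply imported from that reference. Your proposal's bottom line --- defer the full classification to \cite{Abe12} and check that the three-vertex specialization matches the depicted list --- is therefore essentially the same as what the paper does, and the additional verification sketch (finite-field computation of $\chi$, Terao factorization, converse symmetry) is consistent with how one would independently confirm it.
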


 \begin{figure}[htbp!]
\centering
\begin{subfigure}{.2\textwidth}
  \centering
\begin{tikzpicture}[scale=1]
\draw (0,1.3) node[v](1){} node{};
\draw (-0.75,0) node[v](2){} node{};
\draw ( 0.75,0) node[v](3){} node{};
\draw[>=Stealth,->, bend right = 10] (1) to (2);
\draw[>=Stealth,->, bend right = 10] (1) to (3);
\draw[>=Stealth,->, bend right = 10] (2) to (1);
\draw[>=Stealth,->, bend right = 10] (2) to (3);
\draw[>=Stealth,->, bend right = 10] (3) to (1);
\draw[>=Stealth,->, bend right = 10] (3) to (2);
\end{tikzpicture}
 \end{subfigure}%
\begin{subfigure}{.2\textwidth}
  \centering
\begin{tikzpicture}[scale=1]
\draw (0,1.3) node[v](1){} node{};
\draw (-0.75,0) node[v](2){} node{};
\draw ( 0.75,0) node[v](3){} node{};
\draw[>=Stealth,->, bend right = 10] (1) to (2);
\draw[>=Stealth,->, bend right = 10] (1) to (3);
\draw[>=Stealth,->, bend right = 10] (2) to (1);
\draw[>=Stealth,->] (2) to (3);
\draw[>=Stealth,->, bend right = 10] (3) to (1);
\end{tikzpicture}
 \end{subfigure}%
\begin{subfigure}{.2\textwidth}
  \centering
\begin{tikzpicture}[scale=1]
\draw (0,1.3) node[v](1){} node{};
\draw (-0.75,0) node[v](2){} node{};
\draw ( 0.75,0) node[v](3){} node{};
\draw[>=Stealth,->, bend right = 10] (1) to (2);
\draw[>=Stealth,->, bend right = 10] (1) to (3);
\draw[>=Stealth,->, bend right = 10] (2) to (1);
\draw[>=Stealth,->, bend right = 10] (3) to (1);
\end{tikzpicture}
 \end{subfigure}%
 \centering
\begin{subfigure}{.2\textwidth}
  \centering
\begin{tikzpicture}[scale=1]
\draw (0,1.3) node[v](1){} node{};
\draw (-0.75,0) node[v](2){} node{};
\draw ( 0.75,0) node[v](3){} node{};
\draw[>=Stealth,->] (2) to (1);
\draw[>=Stealth,->, bend right = 10] (2) to (3);
\draw[>=Stealth,->] (3) to (1);
\draw[>=Stealth,->, bend right = 10] (3) to (2);
\end{tikzpicture}
 \end{subfigure}%
\begin{subfigure}{.2\textwidth}
  \centering
\begin{tikzpicture}[scale=1]
\draw (0,1.3) node[v](1){} node{};
\draw (-0.75,0) node[v](2){} node{};
\draw ( 0.75,0) node[v](3){} node{};
\draw[>=Stealth,->] (1) to (2);
\draw[>=Stealth,->] (1) to (3);
 \draw[>=Stealth,->, bend right = 10] (2) to (3);
 \draw[>=Stealth,->, bend right = 10] (3) to (2);
\end{tikzpicture}
 \end{subfigure}%

\bigskip
\begin{subfigure}{.2\textwidth}
  \centering
\begin{tikzpicture}[scale=1]
\draw (0,1.3) node[v](1){} node{};
\draw (-0.75,0) node[v](2){} node{};
\draw ( 0.75,0) node[v](3){} node{};
\draw[>=Stealth,->] (1) to (2);
 \draw[>=Stealth,->, bend right = 10] (2) to (3);
 \draw[>=Stealth,->, bend right = 10] (3) to (2);
\end{tikzpicture}
 \end{subfigure}%
\begin{subfigure}{.2\textwidth}
  \centering
\begin{tikzpicture}[scale=1]
\draw (0,1.3) node[v](1){} node{};
\draw (-0.75,0) node[v](2){} node{};
\draw ( 0.75,0) node[v](3){} node{};
\draw[>=Stealth,->, bend right = 10] (2) to (3);
\draw[>=Stealth,->] (3) to (1);
\draw[>=Stealth,->, bend right = 10] (3) to (2);
\end{tikzpicture}
 \end{subfigure}%
\begin{subfigure}{.2\textwidth}
  \centering
\begin{tikzpicture}[scale=1]
\draw (0,1.3) node[v](1){} node{};
\draw (-0.75,0) node[v](2){} node{};
\draw ( 0.75,0) node[v](3){} node{};
\draw[>=Stealth,->] (2) to (1);
\draw[>=Stealth,->] (2) to (3);
\draw[>=Stealth,->] (3) to (1);
\end{tikzpicture}
 \end{subfigure}%
 \begin{subfigure}{.2\textwidth}
  \centering
\begin{tikzpicture}[scale=1]
\draw (0,1.3) node[v](1){} node{};
\draw (-0.75,0) node[v](2){} node{};
\draw ( 0.75,0) node[v](3){} node{};
\draw[>=Stealth,->] (1) to (2);
\draw[>=Stealth,->] (1) to (3);
 \end{tikzpicture}
 \end{subfigure}%
 \begin{subfigure}{.2\textwidth}
  \centering
\begin{tikzpicture}[scale=1]
\draw (0,1.3) node[v](1){} node{};
\draw (-0.75,0) node[v](2){} node{};
\draw ( 0.75,0) node[v](3){} node{};
\draw[>=Stealth,->] (2) to (1);
 \draw[>=Stealth,->] (3) to (1);
\end{tikzpicture}
 \end{subfigure}%
 
 \bigskip
\begin{subfigure}{.2\textwidth}
  \centering
\begin{tikzpicture}[scale=1]
\draw (0,1.3) node[v](1){} node{};
\draw (-0.75,0) node[v](2){} node{};
\draw ( 0.75,0) node[v](3){} node{};
\draw[>=Stealth,->, bend right = 10] (2) to  (3);
\draw[>=Stealth,->, bend right = 10] (3) to  (2);
\end{tikzpicture}
 \end{subfigure}%
\begin{subfigure}{.2\textwidth}
  \centering
\begin{tikzpicture}[scale=1]
\draw (0,1.3) node[v](1){} node{};
\draw (-0.75,0) node[v](2){} node{};
\draw ( 0.75,0) node[v](3){} node{};
\draw[>=Stealth,->] (2) to (3);
\end{tikzpicture}
\end{subfigure}%
\begin{subfigure}{.2\textwidth}
  \centering
\begin{tikzpicture}[scale=1]
\draw (0,1.3) node[v](1){} node{};
\draw (-0.75,0) node[v](2){} node{};
\draw ( 0.75,0) node[v](3){} node{};
\end{tikzpicture}
 \end{subfigure}%
\caption{Characterization for freeness of the arrangements of type (L\ref{localization type 2}).}
\label{fig:Type 3}
\end{figure}
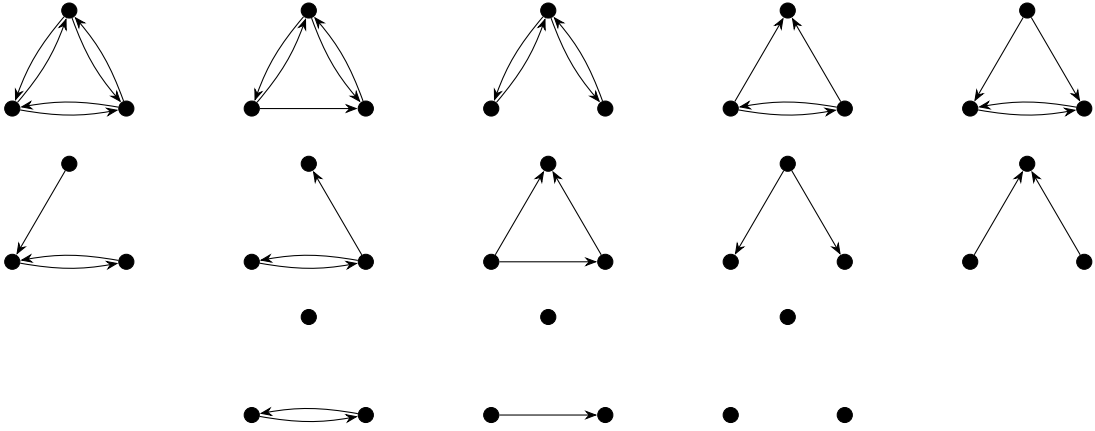

We are left with the arrangement of type (L\ref{localization type 1}). 
In this case $ G[\{i,j\}] $ is isomorphic to either $\overline{K^{\ast}_{2}}$, $ T_{2} $ or $ K^{\ast}_{2} $. 
The following results are useful.
\begin{theorem}[{\cite[Corollary 3.3]{Yo05}}]
 \label{thm:Yo-3arr}
A central arrangement $ \mathcal{A}$ in $\mathbb{K}^3$ is free if and only if 
$$ \chi_{\mathcal{A}}(t) = (t-1)(t-d_2)(t-d_3),$$
where $\exp (\A^H, m^H)=\{d_2, d_3\}$.
\end{theorem}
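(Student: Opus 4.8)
The plan is to prove the two implications separately, treating the forward direction (freeness $\Rightarrow$ factorization) as routine and concentrating the real work on the converse. Throughout I may assume $\A$ is essential of rank $3$, since otherwise $r(\A)\le 2$, in which case $\A$ is automatically free with characteristic polynomial of the asserted shape; I also fix coordinates so that $H=\ker x_3$. For the forward direction, suppose $\A$ is free. Since $\A$ is central, the Euler derivation $\theta_E=x_1\partial_{x_1}+x_2\partial_{x_2}+x_3\partial_{x_3}$ lies in $D(\A)$ and belongs to every homogeneous basis, so $\exp(\A)=\{1,d_2,d_3\}$. Terao's Factorization Theorem (Theorem \ref{thm:Factorization}) then gives $\chi_\A(t)=(t-1)(t-d_2)(t-d_3)$, while Ziegler's restriction theorem (the classical half of Theorem \ref{thm:Yoshinaga's criterion}, \cite[Theorem 11]{Z89}) shows $(\A^H,m^H)$ is free with $\exp(\A^H,m^H)=\{d_2,d_3\}$. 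Hence the roots of the quadratic factor are exactly the Ziegler exponents.

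The substance is the converse, and I would first stress that one cannot simply invoke Theorem \ref{thm:Yoshinaga's criterion}: in dimension $3$ the only flat $X\subseteq H$ with $\codim X=3$ is the origin, and $\A_{\{0\}}=\A$, so the codimension-three local freeness hypothesis there is literally the freeness of $\A$ we are trying to establish. A direct argument is therefore required. Since $\A$ is central of rank $3$, its characteristic polynomial is divisible by $(t-1)$, and I write $\chi_\A(t)=(t-1)\bigl(t^2-(|\A|-1)\,t+b_2\bigr)$, where $|\A|-1=d_2+d_3$ equals the total multiplicity $\sum_{Y\in\A^H}(|\A_Y|-1)$ of the Ziegler restriction. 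Under this normalization the hypothesis $\chi_\A(t)=(t-1)(t-d_2)(t-d_3)$ is equivalent to the single numerical equality $b_2=d_2d_3$, so it suffices to prove that $\A$ is free if and only if $b_2=d_2d_3$.

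To pass from this equality to freeness I would combine Ziegler's restriction map $\rho\colon D(\A)\to D(\A^H,m^H)$ with Saito's criterion. The target $(\A^H,m^H)$ is a multiarrangement in the plane $H$, hence free (every $2$-multiarrangement is free) with a homogeneous basis $\bar\theta_2,\bar\theta_3$ of degrees $d_2,d_3$. If these can be lifted to $\theta_2,\theta_3\in D(\A)$ of the same degrees so that $\{\theta_E,\theta_2,\theta_3\}$ is $S$-independent, then their degrees sum to $1+d_2+d_3=|\A|=\deg Q(\A)$, the determinant $\det(\theta_i(x_j))$ is divisible by $Q(\A)$ (each $\theta_i\in D(\A)$) and so is a nonzero constant multiple of $Q(\A)$ by degree reasons, and Saito's criterion yields a basis. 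The whole difficulty is thus concentrated in the surjectivity of $\rho$ in the relevant degrees, equivalently in the vanishing of the obstruction to lifting. The key input, which I would establish by a local-cohomology (or Hilbert-series) computation, is a comparison formula expressing the defect of $\rho$ as a nonnegative integer,
\begin{align*}
b_2(\A)-d_2d_3=\dim_\K\bigl(\operatorname{coker}\rho\bigr)\ge 0,
\end{align*}
with $\operatorname{coker}\rho$ of finite length. Granting this, the equality $b_2=d_2d_3$ forces $\rho$ to be surjective, the lifts exist and are independent, and freeness follows.

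I expect this comparison formula to be the main obstacle. Conceptually it reflects the fact that for a $3$-arrangement $\widetilde{D(\A)}$ is a rank-$3$ reflexive, hence locally free, sheaf on $\mathbb{P}^2$, whose splitting — equivalently the freeness of $\A$ — is detected by its Chern data: the characteristic polynomial records $c_1$ and $c_2$, and $b_2-d_2d_3$ measures exactly how far $c_2$ exceeds the minimal value compatible with splitting. Making the defect identity precise, that is, controlling the length of $\operatorname{coker}\rho$ and tying it to the coefficient $b_2$, is the technical crux; it is here that the freeness of the $2$-multiarrangement $(\A^H,m^H)$ and the centrality of $\A$ (through the Euler derivation) are used decisively.
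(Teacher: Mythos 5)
First, a point of comparison: the paper does not prove this statement at all --- it is imported verbatim as \cite[Corollary 3.3]{Yo05}, and everything downstream (Propositions \ref{prop:T-psi1=2}--\ref{prop:K*-psi1<2-2}) treats it as a black box. So there is no ``paper's own proof'' to match yours against; the relevant benchmark is Yoshinaga's original argument. Measured against that, your sketch reconstructs the correct architecture. The forward direction (Euler derivation forces $1\in\exp(\A)$ for an essential central arrangement, Terao factorization, Ziegler's restriction theorem) is routine and correct, your handling of the non-essential case checks out, and your observation that Theorem \ref{thm:Yoshinaga's criterion} cannot simply be invoked in dimension $3$ --- because the codimension-three local freeness hypothesis there degenerates to the freeness of $\A$ itself --- is exactly the right thing to flag; it is the reason the $3$-dimensional case needs a separate argument and is logically prior to the general criterion. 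The reduction of the hypothesis to the single equality $b_2=d_2d_3$ (using $|\A|-1=d_2+d_3=\sum_{Y\in\A^H}m^H(Y)$) and the Saito-criterion lifting argument, granted surjectivity of the Ziegler restriction map $\rho$, are also correct and are how the cited proof proceeds.

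The genuine gap is the one you name yourself: the identity
$b_2(\A)-d_2d_3=\dim_\K\left(\operatorname{coker}\rho\right)\ge 0$
is asserted, not proven, and it carries essentially all of the content of the theorem. Everything else in your writeup is formal bookkeeping around this formula; without it you have shown only that freeness would follow from surjectivity of $\rho$, not that the characteristic polynomial detects that surjectivity. Establishing it requires the reflexivity of $D(\A)$ (so that $\operatorname{coker}\rho$ has finite length, supported at the origin), the freeness of the $2$-multiarrangement $(\A^H,m^H)$, and a Hilbert-series or local-cohomology computation comparing the second coefficient of $\chi_\A$ with $d_2d_3$ --- this is precisely the main theorem of \cite{Yo05}. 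So your proposal is a correct roadmap of the cited proof rather than a self-contained proof: the crux you defer is not a technical detail but the theorem itself in disguise.
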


 \begin{theorem}[{\cite[Theorem 1.5]{W07}}]
 \label{thm:Wakamiko}
Let  $(\A, m)$  be a multiarrangement in $\mathbb{K}^2$ consisting of three lines $K_1,K_2,K_3$. 
Let $k_i:=m(K_i) \in \Z_{\ge0}$ for $1 \le i \le 3$ and suppose $k_3 \ge \max\{k_1,k_2\}$. 
Then
\begin{align*}
\exp (\A, m) = \begin{cases}
\{k_3,k_1+k_2\} & \mbox{ if } k_1+k_2-1\le k_3, \\
\{\lfloor \frac{k_1+k_2+k_3}2 \rfloor, \lceil \frac{k_1+k_2+k_3}2 \rceil\} & \mbox{ if } k_1+k_2-1> k_3. 
\end{cases}
\end{align*}
Here $\lfloor x \rfloor$ and $ \lceil x \rceil$ stand for the floor and ceiling functions, respectively.
\end{theorem}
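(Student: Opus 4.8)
Because the freeness and the exponents of a multiarrangement are invariant under linear equivalence, and because any three distinct lines through the origin in $\K^2$ can be carried by an invertible linear map to $x=0$, $y=0$, $x-y=0$, the plan is to first normalize so that $Q(\A,m)=x^{k_1}y^{k_2}(x-y)^{k_3}$ in $S=\K[x,y]$. Writing $\theta=P\partial_x+R\partial_y$, the membership $\theta\in D(\A,m)$ reads $x^{k_1}\mid P$, $y^{k_2}\mid R$ and $(x-y)^{k_3}\mid(P-R)$; putting $P=x^{k_1}A$, $R=y^{k_2}B$ and $P-R=(x-y)^{k_3}C$ identifies $D(\A,m)$ with the module of syzygies of $(x^{k_1},y^{k_2},(x-y)^{k_3})$, a homogeneous derivation of degree $d$ corresponding to a syzygy of internal degree $d$. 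As any multiarrangement in $\K^2$ is free of rank two, the exponents are the degrees $c_1\le c_2$ of the two syzygy generators, and Saito's criterion forces $c_1+c_2=\deg Q(\A,m)=k_1+k_2+k_3=:s$. The whole problem thus reduces to pinning down the least syzygy degree $c_1$.

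Two explicit syzygies give upper bounds on $c_1$. The Koszul relation $y^{k_2}\cdot x^{k_1}-x^{k_1}\cdot y^{k_2}=0$ is the syzygy $(y^{k_2},-x^{k_1},0)$ of internal degree $k_1+k_2$ (the derivation $x^{k_1}y^{k_2}(\partial_x+\partial_y)$, which annihilates $x-y$), so $c_1\le k_1+k_2$. Separately, a parameter count for homogeneous $A,B$ of degree $c$ solving $x^{k_1}A+y^{k_2}B\equiv 0\pmod{(x-y)^{k_3}}$ leaves a solution space of dimension at least $2c-s+2$, which is positive as soon as $c\ge\lfloor s/2\rfloor$; hence $c_1\le\lfloor s/2\rfloor$ as well. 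These two bounds produce precisely the two candidate pairs appearing in the statement, namely $\{k_1+k_2,k_3\}$ and $\{\lfloor s/2\rfloor,\lceil s/2\rceil\}$, which moreover coincide on the boundary $k_3=k_1+k_2-1$; establishing that the relevant bound is attained is the content of the next two steps.

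For the regime $k_3\ge k_1+k_2-1$ I would exhibit an explicit basis and invoke Saito's criterion. Take $\theta_1=x^{k_1}y^{k_2}(\partial_x+\partial_y)$ of degree $k_1+k_2$, and look for $\theta_2=x^{k_1}u\,\partial_x+y^{k_2}w\,\partial_y$ of degree $k_3$ with $\theta_2(x-y)=x^{k_1}u-y^{k_2}w=(x-y)^{k_3}$ on the nose. Reducing $(x-y)^{k_3}$ modulo $x^{k_1}$, the surviving monomials have $y$-exponents $k_3,k_3-1,\dots,k_3-k_1+1$, so the reduction is divisible by $y^{k_2}$ exactly when $k_3-k_1+1\ge k_2$, i.e.\ when $k_3\ge k_1+k_2-1$; this furnishes the required $u,w$. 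The Saito determinant of $(\theta_1,\theta_2)$ then collapses to $x^{k_1}y^{k_2}(y^{k_2}w-x^{k_1}u)=-Q(\A,m)$, so $\{\theta_1,\theta_2\}$ is a basis and $\exp(\A,m)=\{k_1+k_2,k_3\}$, as claimed.

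The balanced regime $k_3\le k_1+k_2-2$ is where I expect the real difficulty: the degree-$k_3$ construction above provably breaks down, and one must certify that $c_1$ is exactly $\lfloor s/2\rfloor$ and not smaller, i.e.\ that $(x-y)^{k_3}$ creates no syzygy below degree $\lfloor s/2\rfloor$. I would close this gap by computing $\dim_{\K}S/(x^{k_1},y^{k_2},(x-y)^{k_3})$ as a monomial count, starting from $\dim_{\K}S/(x^{k_1},y^{k_2})=k_1k_2$ and subtracting the contribution of the third generator, and then reading $\{c_1,c_2\}$ off the Hilbert--Burch numerator $1-t^{k_1}-t^{k_2}-t^{k_3}+t^{c_1}+t^{c_2}$ through the two relations $c_1+c_2=s$ and $c_1(c_1-1)+c_2(c_2-1)=2\dim_{\K}S/I+\sum_i k_i(k_i-1)$. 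A fallback would be an induction lowering $(k_1,k_2,k_3)$ to $(k_1-1,k_2-1,k_3)$, legitimate since $k_1,k_2\ge 2$ throughout this regime, via a multiarrangement addition--deletion comparison that descends while remaining balanced. In either route, the crux is this exact low-degree count, showing that the third line contributes no relation below the balanced degree $\lfloor s/2\rfloor$.
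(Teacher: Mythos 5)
This statement is not proved in the paper at all: it is imported verbatim from Wakamiko \cite[Theorem 1.5]{W07}, so there is no internal argument to compare yours against, and I have to judge the proposal on its own merits. The framework you set up is correct and the unbalanced half is genuinely complete: the identification of $D(\A,m)$ with the syzygies of $(x^{k_1},y^{k_2},(x-y)^{k_3})$, the reduction to locating the smaller exponent $c_1$ via Ziegler's rank-two freeness and $c_1+c_2=k_1+k_2+k_3=s$, the Koszul derivation $x^{k_1}y^{k_2}(\partial_x+\partial_y)$, the observation that $(x-y)^{k_3}$ splits as $x^{k_1}u-y^{k_2}w$ precisely when $k_3\ge k_1+k_2-1$, and the Saito determinant $-Q(\A,m)$ all check out and give $\exp(\A,m)=\{k_1+k_2,k_3\}$ in that regime.

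The balanced regime $k_3\le k_1+k_2-2$ is where the proof stops being a proof. Your parameter count only yields the upper bound $c_1\le\lfloor s/2\rfloor$; the matching lower bound, i.e.\ the absence of any syzygy in degree below $\lfloor s/2\rfloor$, is exactly the assertion that multiplication by $(x-y)^{k_3}$ on $\K[x,y]/(x^{k_1},y^{k_2})$ has maximal rank in every degree. Describing the required computation of $\dim_{\K}S/(x^{k_1},y^{k_2},(x-y)^{k_3})$ as ``a monomial count'' obscures this: $(x-y)^{k_3}$ is not a monomial, and the rank of that multiplication map is a nontrivial Lefschetz-type fact about the monomial complete intersection $\K[x,y]/(x^{k_1},y^{k_2})$ (provable, e.g., via its $\mathfrak{sl}_2$-structure as $V_{k_1-1}\otimes V_{k_2-1}$, or by an explicit binomial determinant), which is essentially equivalent to the very Hilbert function you want to read the exponents from. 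The Hilbert--Burch relations you list are correct but inert without that input, and the fallback induction to $(k_1-1,k_2-1,k_3)$ likewise rests on an unproved addition--deletion step of comparable depth. So the strategy is viable, but the decisive half of the theorem --- the lower bound on $c_1$ in the balanced case --- is asserted rather than established.
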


We now fix some notation that will be used in the remainder of this subsection. 
For a nonempty central arrangement $\A$, it is known that $ \chi_{\mathcal{A}}(t) $ is divisible by $t-1$.
Denote $\overline{\chi}_\A (t)  \coloneqq \frac1{t-1} \chi_{\mathcal{A}}(t) $. 
Let $ \psi \colon [\ell] \to 2^{\mathbb{Z}} $ be an arbitrary weight map. 
For an integer interval $[a,b]$ and for $n \in \Z$, denote $[a,b]+n:=[a+n,b+n]$. 
In particular, $\psi(i)+n=[a_i,b_i]+n=[a_i+n,b_i+n]$.

\begin{proposition}
 \label{prop:K-Nish}
Let $ \mathcal{A}=\mathbf{c}\mathcal{A}(\overline{K^{\ast}_{2}},\psi) $. 
The following are equivalent. 
\begin{enumerate}[(1)]
\item $ \psi(1) \subseteq \psi(2) $ or $ \psi(1) \supseteq \psi(2) $. 
\item $ \mathcal{A} $ is supersolvable. 
\item $ \mathcal{A} $ is free. 
\end{enumerate}
\end{proposition}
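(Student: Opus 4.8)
The plan is to establish the cycle of implications $(1)\Rightarrow(2)\Rightarrow(3)\Rightarrow(1)$, of which $(2)\Rightarrow(3)$ is immediate from Theorem~\ref{thm:exp-ss}. Throughout I would work with the explicit description of $\A=\cc\A(\overline{K^{\ast}_{2}},\psi)$ as the essential rank-three central arrangement in $\R^3$ (coordinates $x_1,x_2,z$) consisting of the infinite hyperplane $H_\infty\colon z=0$, the plane $x_1-x_2=0$, the planes $x_1=cz$ for $c\in\psi(1)$, and the planes $x_2=cz$ for $c\in\psi(2)$. I would abbreviate $m_i:=|\psi(i)|$ and $s:=|\psi(1)\cap\psi(2)|$; since $\psi(1)$ and $\psi(2)$ are integer intervals, one of them contains the other precisely when $s=\min\{m_1,m_2\}$, and this scalar identity is the reformulation of $(1)$ that I will ultimately match against freeness.

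For $(1)\Rightarrow(2)$, using the coordinate swap $x_1\leftrightarrow x_2$ (which interchanges the roles of $\psi(1)$ and $\psi(2)$ and preserves both supersolvability and $\A$ up to linear equivalence) I may assume $\psi(1)\subseteq\psi(2)$. I would take $X$ to be the line $\{x_2=0,\ z=0\}$ and verify via Proposition~\ref{prop:ss} that the localization $\A_X=\{z=0\}\cup\{x_2=cz\mid c\in\psi(2)\}$ is a modular coatom. The hyperplanes of $\A\setminus\A_X$ are $x_1-x_2=0$ together with the planes $x_1=cz$ $(c\in\psi(1))$, and for any two of them the intersection lies in a member of $\A_X$: two planes $x_1=cz$ meet in the line $\{x_1=0,\ z=0\}\subseteq\{z=0\}$, while $\{x_1-x_2=0\}\cap\{x_1=cz\}=\{x_1=x_2=cz\}\subseteq\{x_2=cz\}$ precisely because $c\in\psi(1)\subseteq\psi(2)$. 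Since $\A_X$ is a rank-two pencil it is supersolvable, and appending $\A$ to the top of an $\mathrm{M}$-chain of $\A_X$ exhibits $\A$ as supersolvable.

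The heart of the argument is $(3)\Rightarrow(1)$, which I would prove by evaluating both sides of the criterion in Theorem~\ref{thm:Yo-3arr}. First I would determine the Ziegler restriction onto $H_\infty$: restricting collapses all planes $x_1=cz$ to the single line $x_1=0$, all planes $x_2=cz$ to $x_2=0$, and $x_1-x_2=0$ to $x_1=x_2$, giving three lines in $H_\infty\cong\R^2$ with multiplicities $(m_1,m_2,1)$. Applying Theorem~\ref{thm:Wakamiko} with the largest multiplicity as $k_3$, the inequality $\min\{m_1,m_2\}\le\max\{m_1,m_2\}$ always places us in the first case, so $\exp(\A^{H_\infty},m^{H_\infty})=\{\max\{m_1,m_2\},\ \min\{m_1,m_2\}+1\}$. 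Next I would compute $\chi_\A$ via the finite field method (Theorem~\ref{Crapo--Rota finite field method}): counting points of $\mathbb{F}_t^2$ that avoid $x_1=x_2$, $x_1\in\psi(1)$ and $x_2\in\psi(2)$ (splitting on whether $x_1\in\psi(2)$) yields $\chi_{\A(\overline{K^{\ast}_{2}},\psi)}(t)=t^2-(m_1+m_2+1)t+(m_1m_2+m_1+m_2-s)$, hence $\chi_\A(t)=(t-1)\,\chi_{\A(\overline{K^{\ast}_{2}},\psi)}(t)$. By Theorem~\ref{thm:Yo-3arr}, $\A$ is free if and only if $\chi_\A(t)=(t-1)(t-\max\{m_1,m_2\})(t-\min\{m_1,m_2\}-1)$; the linear coefficients agree automatically, so freeness reduces to equality of constant terms, namely $m_1m_2+m_1+m_2-s=\max\{m_1,m_2\}(\min\{m_1,m_2\}+1)=m_1m_2+\max\{m_1,m_2\}$, i.e.\ $s=\min\{m_1,m_2\}$. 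This is exactly condition $(1)$, closing the cycle. The main obstacle is this final step: executing the characteristic-polynomial count correctly and recognizing that the entire freeness question collapses to the single identity $s=\min\{m_1,m_2\}$; the small but essential observation that makes the comparison clean is that Wakamiko's dichotomy never enters its second case here.
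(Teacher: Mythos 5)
Your proof is correct, but it takes a genuinely different route from the paper. The paper disposes of this proposition in one line: since $\overline{K^{\ast}_{2}}$ is edgeless, $\mathcal{A}(\overline{K^{\ast}_{2}},\psi)$ is affinely equivalent to an $N$-Ish arrangement (Example \ref{ex:aff}), and the three equivalences are quoted directly from the known characterization of free and supersolvable $N$-Ish arrangements in \cite[Theorem 1.3]{AST17}. You instead give a self-contained argument: a modular coatom exhibited via Proposition \ref{prop:ss} for $(1)\Rightarrow(2)$, and for $(3)\Rightarrow(1)$ the combination of Wakamiko's exponents for the rank-two Ziegler restriction $(m_1,m_2,1)$, a finite-field count of $\chi_{\mathcal{A}}$, and Yoshinaga's criterion (Theorem \ref{thm:Yo-3arr}), reducing everything to the identity $|\psi(1)\cap\psi(2)|=\min\{m_1,m_2\}$. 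All the individual steps check out (the localization $\{z=0\}\cup\{x_2=cz\}$ is indeed modular exactly because $\psi(1)\subseteq\psi(2)$, Wakamiko's first case always applies since $\min\le\max$, and the constant-term comparison is the only nontrivial constraint). What is notable is that your method is precisely the template the paper itself deploys for the harder two-vertex cases with edges (Propositions \ref{prop:T-psi1=2} and \ref{prop:K-psi1=2}), so your argument is consistent with the paper's toolkit and has the virtue of being self-contained; the paper's citation is of course much shorter but imports the result as a black box.
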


\begin{proof}
Note that $\mathcal{A}(\overline{K^{\ast}_{2}},\psi) $ is affinely equivalent to an $N$-Ish arrangement (Example \ref{ex:aff}). The equivalences are already proved in \cite[Theorem 1.3]{AST17}.
\end{proof}

The proofs of the upcoming propositions are very similar. 
We will give the details for two of them hoping that the interested reader will have no difficulty finding the absent proofs. 
Let  $H_c$ denote the hyperplane $x_1-x_2=cz$, where $c \in [-1,1]$.

\begin{proposition}
 \label{prop:T-psi1=2}
Let $ \mathcal{A}=\mathbf{c}\mathcal{A}(T_{2},\psi) $ and suppose  $ |\psi(1)| = |\psi(2)| =m$. 
The following are equivalent. 
\begin{enumerate}[(1)]
\item $ \psi(1) = \psi(2) $ or $ \psi(1) = \psi(2)+1 $. 
\item $ \mathcal{A} $ is free. 
\end{enumerate}
In addition, $ \mathcal{A} $ is supersolvable if and only if $ \mathcal{A} $ is free and $m= 1 $. 
\end{proposition}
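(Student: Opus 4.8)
The plan is to apply Yoshinaga's rank-three criterion (Theorem \ref{thm:Yo-3arr}) to the central essential arrangement $\mathcal{A}=\cc\mathcal{A}(T_{2},\psi)$ in $\K^{3}$, whose hyperplanes are $H_{\infty}=\{z=0\}$, $\{x_{1}-x_{2}=0\}$, $\{x_{1}-x_{2}=z\}$, the $m$ planes $\{x_{1}=cz\}$ $(c\in\psi(1))$, and the $m$ planes $\{x_{2}=cz\}$ $(c\in\psi(2))$. By that criterion $\mathcal{A}$ is free with $\exp(\mathcal{A})=\{1,d_{2},d_{3}\}$ if and only if $\overline{\chi}_{\mathcal{A}}(t)=(t-d_{2})(t-d_{3})$, where $\{d_{2},d_{3}\}$ are the exponents of the Ziegler restriction $(\mathcal{A}^{H_{\infty}},m^{H_{\infty}})$. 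So I would compute these two pieces of data separately and then compare.

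First the Ziegler restriction. Restricting to $H_{\infty}$ collapses all planes $\{x_{1}=cz\}$ to the single line $K_{1}=\{x_{1}=0\}$, all $\{x_{2}=cz\}$ to $K_{2}=\{x_{2}=0\}$, and both $\{x_{1}-x_{2}=0\}$ and $\{x_{1}-x_{2}=z\}$ to $K_{3}=\{x_{1}-x_{2}=0\}$. Counting the planes of $\mathcal{A}$ through each corresponding rank-two flat gives $m^{H_{\infty}}(K_{1})=m$, $m^{H_{\infty}}(K_{2})=m$, $m^{H_{\infty}}(K_{3})=2$. This is a multiarrangement of three lines in $\K^{2}$ with multiplicities $(m,m,2)$, and I would feed it into Wakamiko's formula (Theorem \ref{thm:Wakamiko}); splitting the case $m=1$ (take $k_{3}=2$) from $m\ge 2$ (take $k_{3}=m$) to select the largest multiplicity, one obtains uniformly $\exp(\mathcal{A}^{H_{\infty}},m^{H_{\infty}})=\{m+1,m+1\}$. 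Hence freeness of $\mathcal{A}$ is equivalent to $\overline{\chi}_{\mathcal{A}}(t)=(t-(m+1))^{2}$.

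Next I would compute $\overline{\chi}_{\mathcal{A}}(t)=\chi_{\mathcal{A}(T_{2},\psi)}(t)$ by the finite field method (Theorem \ref{Crapo--Rota finite field method}) applied to the deconed arrangement in $\R^{2}$. For large $p$, fixing $x_{2}\notin\psi(2)$ and counting admissible $x_{1}$ (avoiding $\psi(1)$, $x_{2}$, and $x_{2}+1$) yields, after an inclusion--exclusion,
\begin{align*}
\chi_{\mathcal{A}(T_{2},\psi)}(t)=(t-(m+1))^{2}-1+2m-|\psi(1)\cap\psi(2)|-|\psi(1)\cap(\psi(2)+1)|.
\end{align*}
Thus $\mathcal{A}$ is free if and only if $|\psi(1)\cap\psi(2)|+|\psi(1)\cap(\psi(2)+1)|=2m-1$. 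Since $\psi(1),\psi(2)$ are integer intervals of common length $m$, and $\psi(2)\cup(\psi(2)+1)$ has size $m+1$ while $\psi(2)\cap(\psi(2)+1)$ has size $m-1$, the first summand is at most $m$ and the second at most $m-1$; so the equality is a maximum-type equality forcing $\psi(1)\subseteq\psi(2)\cup(\psi(2)+1)$ together with $\psi(2)\cap(\psi(2)+1)\subseteq\psi(1)$, which happens precisely when $\psi(1)=\psi(2)$ or $\psi(1)=\psi(2)+1$. This proves $(1)\Leftrightarrow(2)$. I expect this interval bookkeeping, rather than anything conceptual, to be the most error-prone step.

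For the supersolvability addendum, supersolvable implies free (Theorem \ref{thm:exp-ss}), so I may assume $\mathcal{A}$ is free, whence $\exp(\mathcal{A})=\{1,m+1,m+1\}$ and $|\mathcal{A}|=2m+3$. Any M-chain has first step of size $1$, so the top modular coatom $\mathcal{A}_{X}$ must satisfy $|\mathcal{A}\setminus\mathcal{A}_{X}|=m+1$, i.e.\ $|\mathcal{A}_{X}|=m+2$. I would then classify the rank-two flats $X$: those lying in $H_{\infty}$ carry at most $m+1$ planes (the flats $\{z=0,x_{1}=0\}$ and $\{z=0,x_{2}=0\}$ attain $m+1$), while the affine flats carry at most $3$ planes, since at most three of the four relevant affine lines can meet at one point (the lines $x_{1}-x_{2}=0$ and $x_{1}-x_{2}=1$ being parallel). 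Hence $|\mathcal{A}_{X}|\le m+1<m+2$ whenever $m\ge 2$, so no suitable modular coatom exists and $\mathcal{A}$ is not supersolvable. When $m=1$ the required size is $m+2=3$, and an affine triple point supplies a rank-two flat $X$ with $|\mathcal{A}_{X}|=3$; I would verify via Proposition \ref{prop:ss} that $\mathcal{A}_{X}$ is a modular coatom, and since $\mathcal{A}_{X}$ then has rank two it is automatically supersolvable, giving a full M-chain. The main obstacle here is carrying out the rank-two flat classification completely and checking the modular-coatom condition, though the numerical gap $m+1<m+2$ makes the $m\ge 2$ case decisive.
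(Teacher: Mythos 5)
Your proof is correct for $m\ge 1$ and follows essentially the same route as the paper: both apply Theorem \ref{thm:Yo-3arr} after computing $\exp(\mathcal{A}^{H_{\infty}},m^{H_{\infty}})=\{m+1,m+1\}$ via Theorem \ref{thm:Wakamiko}, your finite-field count of $\overline{\chi}_{\mathcal{A}}(t)$ in terms of intersections is just the inclusion--exclusion complement of the paper's deletion--restriction formula in terms of unions, and the supersolvability analysis (forcing a modular coatom with $|\mathcal{A}_{X}|=m+2$ and bounding the sizes of rank-two localizations) is the paper's argument. The only gap is the degenerate case $m=0$, which is in scope since the paper allows empty weights (e.g.\ $\mathcal{A}(T_{2},\varnothing)=\Shi(2)$): there the Ziegler restriction consists of a single line, so the three-line formula does not yield $\{m+1,m+1\}$, and your criterion $|\psi(1)\cap\psi(2)|+|\psi(1)\cap(\psi(2)+1)|=2m-1$ would wrongly declare the (rank-two, hence free and even supersolvable) arrangement non-free; the paper dispatches this case separately, and you should too.
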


\begin{proof}
By definition, $\mathbf{c}\mathcal{A}(T_{2},\psi) $ consists of the following hyperplanes:
\begin{align*}
x_{1} &= az \quad (a \in  \psi(1)), \\
x_{2} &= bz \quad (b \in \psi(2)) ,\\
x_{1}-x_{2}&= 0, z,\\ 
z &= 0. 
\end{align*}
The case $m=0$ is easy. In fact, $\mathcal{A}(T_{2},\varnothing) =\Shi(2)$ (see Example \ref{ex:empty} and Theorem \ref{thm:SI-free}). 

Suppose $m>0$. 
By  Theorem \ref{thm:Wakamiko}, the Ziegler restriction $(\A^{H_{\infty}}, m^{H_{\infty}})$ is free with  $\exp (\A^{H_{\infty}}, m^{H_{\infty}})=\{m+1,m+1\}$. 
It is easily seen that $\mathcal{A} \setminus \{H_0,H_1\}$ is supersolvable with exponents $\{1, m,m\}$.
By the Deletion-Restriction formula (e.g., \cite[Theorem 2.56]{OT92}), 
\begin{align*}
\overline{\chi}_\mathcal{A}(t)  & = \overline{\chi}_{\mathcal{A} \setminus \{H_0,H_1\}}( t)-\overline{\chi}_{(\mathcal{A} \setminus \{H_1\})^{H_0}} ( t)-\overline{\chi}_{\mathcal{A}^{H_1}}( t) \\
  & = (t-m)^2 - (t- |\psi(1) \cup \psi(2)|) - (t- |\psi(1) \cup (\psi(2)+1)|).
\end{align*}

If $ \psi(1) = \psi(2) $ or $ \psi(1) = \psi(2)+1 $, then $\overline{\chi}_\mathcal{A}(t) =  (t-m-1)^2$. 
By Theorem \ref{thm:Yo-3arr}, $ \mathcal{A} $ is free. 
Conversely, if $ \mathcal{A} $ is free, then by Theorem \ref{thm:Yoshinaga's criterion}, $\exp(\A) = \{1,m+1,m+1\}$. 
Therefore, 
\begin{equation*}
 \label{eq:psi1=2}
2m+1 = |\psi(1) \cup \psi(2)| + |\psi(1) \cup (\psi(2)+1)|.
\end{equation*}
Note that $ |\psi(1) \cup (\psi(2)+1)|  \ge  |\psi(1)| = m$. 
Thus $m\le |\psi(1) \cup \psi(2)| \le m+1$.
If $ |\psi(1) \cup \psi(2)|=m$, then $ \psi(1) = \psi(2) $ since  both sets have cardinality $m$. 
If $ |\psi(1) \cup \psi(2)|=m+1$, then $ |\psi(1) \cup (\psi(2)+1)|=m$, hence $ \psi(1) = \psi(2)+1 $. 

Now suppose that $\A$ is  supersolvable. 
Since $\A$ must be free with  $\exp(\A) = \{1,m+1,m+1\}$, by  Theorem \ref{thm:exp-ss}, there exists an M-chain $ \mathcal{A}_{X_1} \subseteq \mathcal{A}_{X_2} \subseteq \mathcal{A}_{X_{3}} = \A,$ where $X_2 \in L(\A)$ is a modular coatom such that $|\A_{X_2}|=m+2$. 
Note that for any coatom $X \in L(\A)$, we have $|\A_X| \in\{3,4,m+1\}$. 
Thus $1\le m\le 2$. 
If $m=2$, then $\A_{X_2} = \{x_1 = az, x_{2}= bz, x_{1}-x_{2}= cz, H_{\infty} \}$ for some $a \in  \psi(1), b \in \psi(2), c\in\{-1,1\}$. 
However, one can use Proposition \ref{prop:ss} to check easily that there is no choice for such $X_1$. 
Thus $m=1$. 
The converse is easy.
\end{proof}

\begin{proposition}
 \label{prop:K-psi1=2}
Let $ \mathcal{A}=\mathbf{c}\mathcal{A}(K^{\ast}_{2},\psi) $ and suppose $ |\psi(1)| = |\psi(2)| =m$. 
The following are equivalent. 
\begin{enumerate}[(1)]
\item either (i) $ \psi(1) = \psi(2) $, or (ii) $m=1, \psi(1) = \psi(2)+1$, or (iii) $m=1, \psi(1) = \psi(2)-1$.
\item $ \mathcal{A} $ is free. 
\end{enumerate}
In addition, $ \mathcal{A} $ is supersolvable if and only if $ \mathcal{A} $ is free and $1\le m\le 2$.  
\end{proposition}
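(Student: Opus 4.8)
The plan is to mirror the proof of Proposition \ref{prop:T-psi1=2}, exploiting that
$\mathbf{c}\mathcal{A}(K^{\ast}_{2},\psi)=\mathbf{c}\mathcal{A}(T_{2},\psi)\cup\{H_{-1}\}$ differs from the transitive-tournament cone by the single extra hyperplane $H_{-1}$. As in Proposition \ref{prop:T-psi1=2}, the degenerate case $m=0$ is immediate (there $\mathcal{A}=\mathbf{c}\Cat(2)$ is free by Remark \ref{rem:cat} and condition (i) holds with $\psi(1)=\psi(2)=\varnothing$), so I assume $m\ge1$. I then apply Yoshinaga's three-dimensional criterion (Theorem \ref{thm:Yo-3arr}): writing $\{d_2,d_3\}$ for the exponents of the Ziegler restriction $(\mathcal{A}^{H_{\infty}},m^{H_{\infty}})$, the cone $\mathcal{A}$ is free if and only if $\overline{\chi}_{\mathcal{A}}(t)=(t-d_2)(t-d_3)$. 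Thus the task splits into computing the Ziegler exponents and the characteristic polynomial, then comparing them.

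For the Ziegler restriction onto $H_{\infty}$, the three lines $x_1=0$, $x_2=0$, $x_1-x_2=0$ carry multiplicities $m,m,3$ (the last because $H_0,H_1,H_{-1}$ all restrict to $x_1=x_2$). Wakamiko's formula (Theorem \ref{thm:Wakamiko}) gives $\{d_2,d_3\}=\{2,3\},\{3,4\},\{m+1,m+2\}$ according as $m=1$, $m=2$, or $m\ge3$; in every case $d_2+d_3=2m+3$ and $d_2d_3-m^2=3m+2$. For the characteristic polynomial I reuse the computation in the proof of Proposition \ref{prop:T-psi1=2} and peel off $H_{-1}$ by Deletion--Restriction, obtaining $\overline{\chi}_{\mathcal{A}}(t)=(t-m)^2-(t-P_0)-(t-P_+)-(t-P_-)$, where $P_0=|\psi(1)\cup\psi(2)|$, $P_+=|\psi(1)\cup(\psi(2)+1)|$ and $P_-=|\psi(1)\cup(\psi(2)-1)|$. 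The new term $P_-$ arises from the restriction to $H_{-1}$: there $H_0,H_1,H_{\infty}$ collapse to the single line $x_1=x_2$, while $x_1=az$ and $x_2=bz$ restrict to the same line exactly when $a=b-1$, so the restriction has $P_-+1$ distinct lines and $\overline{\chi}_{\mathcal{A}^{H_{-1}}}(t)=t-P_-$.

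Since the linear coefficients already agree, Theorem \ref{thm:Yo-3arr} reduces freeness to the single equation $P_0+P_++P_-=d_2d_3-m^2=3m+2$. Writing $\psi(1)=\psi(2)+s$, two integer intervals of length $m$ at offset $r$ satisfy $|\psi(2)\cup(\psi(2)+r)|=m+\min(|r|,m)$, whence $P_0+P_++P_-=3m+f(s)$ with $f(s):=\min(|s|,m)+\min(|s-1|,m)+\min(|s+1|,m)$. Freeness is therefore equivalent to $f(s)=2$. A short check shows $f(0)=2$ for all $m\ge1$, that $f(\pm1)=2$ exactly when $m=1$, and that $f(s)\ge3$ once $|s|\ge2$; these are precisely the cases (i), (ii), (iii), establishing (1)$\Leftrightarrow$(2).

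For supersolvability (with $m\ge1$), suppose $\mathcal{A}$ is supersolvable, hence free with $\exp(\mathcal{A})=\{1,d_2,d_3\}$. A modular coatom $\mathcal{A}_X$ forces $\{|\mathcal{A}_X|-1,\,|\mathcal{A}|-|\mathcal{A}_X|\}=\{d_2,d_3\}$ by Proposition \ref{prop:modular coatom}, and since $|\mathcal{A}|=2m+4$ this demands $|\mathcal{A}_X|\in\{m+2,m+3\}$ when $m\ge3$. However, every rank-two flat localization of $\mathcal{A}$ is one of: the two axis pencils $\{x_i=az\}\cup\{H_{\infty}\}$ of size $m+1$, the diagonal pencil $\{H_0,H_1,H_{-1},H_{\infty}\}$ of size $4$, or a point off $H_{\infty}$ of size at most $3$; for $m\ge3$ the maximum size is $m+1$, so none attains $m+2$ or $m+3$, and $\mathcal{A}$ is not supersolvable. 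Conversely, when $1\le m\le2$ and $\mathcal{A}$ is free, the offset is so small that $|a-b|\le1$ for all $a\in\psi(1),b\in\psi(2)$, so by Proposition \ref{prop:ss} the diagonal pencil is a modular coatom; prepending an $M$-chain of this rank-two localization gives an $M$-chain of $\mathcal{A}$, proving supersolvability. I expect the main obstacle to be the bookkeeping in the two restriction counts---in particular pinning down the coincidence $a=b-1$ that produces $P_-$---together with the verification that no rank-two localization reaches size $m+2$ or $m+3$, which is exactly what rules out supersolvability beyond $m=2$.
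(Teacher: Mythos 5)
Your proposal is correct and follows essentially the same route as the paper: deletion--restriction of $H_{-1},H_0,H_1$ to get $\overline{\chi}_{\mathcal{A}}(t)=(t-m)^2-(t-P_-)-(t-P_0)-(t-P_+)$, Wakamiko's formula for the Ziegler exponents $\{m+1,m+2\}$, Yoshinaga's rank-three criterion reducing freeness to $P_-+P_0+P_+=3m+2$, and the modular-coatom size count for supersolvability. Your only deviation is cosmetic but pleasant: parametrizing $\psi(1)=\psi(2)+s$ and minimizing $f(s)=\min(|s|,m)+\min(|s-1|,m)+\min(|s+1|,m)$ replaces the paper's three-way case analysis on $|\psi(1)\cup(\psi(2)-1)|$ with a single clean computation, and you make explicit the ``easy'' converse for supersolvability that the paper omits.
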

\begin{proof}
By definition, $\mathbf{c}\mathcal{A}(K^{\ast}_{2},\psi) $ consists of the following hyperplanes:
\begin{align*}
x_{1} &= az \quad (a \in  \psi(1)), \\
x_{2} &= bz \quad (b \in \psi(2)) ,\\
x_{1}-x_{2}&= 0, z,-z\\ 
z &= 0. 
\end{align*}
The case $m=0$ is easy. In fact, $\mathcal{A}(K^{\ast}_{2},\varnothing) =\Cat(2)$ (see Example \ref{ex:empty} and Remark \ref{rem:cat}). 

Suppose $m>0$. 
By  Theorem \ref{thm:Wakamiko}, the Ziegler restriction $(\A^{H_{\infty}}, m^{H_{\infty}})$ is free with  $\exp (\A^{H_{\infty}}, m^{H_{\infty}})=\{m+1,m+2\}$. 
It is easily seen that $\mathcal{A} \setminus \{H_0,H_1, H_{-1}\}$ is supersolvable with exponents $\{1, m,m\}$.
By the Deletion-Restriction formula, 
\begin{align*}
\overline{\chi}_\mathcal{A}(t)  & = \overline{\chi}_{\mathcal{A} \setminus \{H_0,H_1,H_{-1}\}}( t)-\overline{\chi}_{(\mathcal{A} \setminus \{H_0,H_1\})^{H_{-1}} }( t)-\overline{\chi}_{(\mathcal{A} \setminus \{H_1\})^{H_0} }( t)-\overline{\chi}_{\mathcal{A}^{H_1} }( t) \\
  & = (t-m)^2 - (t- |\psi(1) \cup (\psi(2)-1)|)\\
   & \quad-(t- |\psi(1) \cup \psi(2)|) - (t- |\psi(1) \cup (\psi(2)+1)|).
\end{align*}

If either (1i), (1ii) or (1iii) occurs, then $\overline{\chi}_\mathcal{A}(t) =  (t-m-1)(t-m-2)$. 
By Theorem \ref{thm:Yo-3arr}, $ \mathcal{A} $ is free. 
Conversely, if $ \mathcal{A} $ is free, then by Theorem \ref{thm:Yoshinaga's criterion},  $\exp(\A)  = \{1,m+1,m+2\}$. 
Therefore, 
\begin{equation}
 \label{eq:kpsi1=2}
3m+2 = |\psi(1) \cup (\psi(2)-1)|+ |\psi(1) \cup \psi(2)| + |\psi(1) \cup (\psi(2)+1)|.
\end{equation}
Note that  $m\le |\psi(1) \cup (\psi(2)-1)|\le m+2$.
\begin{enumerate}[(a)]
\item If $  |\psi(1) \cup (\psi(2)-1)|=m+2$, then $ 2m = |\psi(1) \cup \psi(2)| + |\psi(1) \cup (\psi(2)+1) |$. 
Thus $\psi(2)=\psi(1) = \psi(2)+1$, a contradiction. 
\item If $  |\psi(1) \cup (\psi(2)-1)|=m+1$, then $ 2m+1 = |\psi(1) \cup \psi(2)| + |\psi(1) \cup (\psi(2)+1) |$. 
By the proof of Proposition \ref{prop:T-psi1=2}, $ \psi(1) = \psi(2) $ or $ \psi(1) = \psi(2)+1 $. 
In the case $ \psi(1) = \psi(2)+1 $, we must have  $ |\psi(1) \cup \psi(2)| =m+1= |\psi(1) \cup (\psi(2)-1)|$.
This happens only when $m=1$.
\item If $  |\psi(1) \cup (\psi(2)-1)|=m$, then $\psi(1) = \psi(2)-1$. 
Moreover, $ 2m+2 = |\psi(1) \cup \psi(2)| + |\psi(1) \cup (\psi(2)+1) |$. 
Thus  $ |\psi(1) \cup (\psi(2)+1) |=m+1$.
This happens only when $m=1$.
\end{enumerate}

Now suppose that $\A$ is  supersolvable. 
Since $\A$ must be free with  $\exp(\A) = \{1,m+1,m+2\}$, by  Theorem \ref{thm:exp-ss}, there exists an M-chain $ \mathcal{A}_{X_1} \subseteq \mathcal{A}_{X_2} \subseteq \mathcal{A}_{X_{3}} = \A,$ where $X_2 \in L(\A)$ is a modular coatom such that $m+2 \le |\A_{X_2}|\le m+3$. 
Note that for any coatom $X \in L(\A)$, we have $|\A_X| \in\{4,m+1\}$. 
Thus $1\le m\le 2$ and $ |\A_{X_2}|=4$. The converse is easy.
 \end{proof}

\begin{proposition}
 \label{prop:psi1<2}
Let $ \mathcal{A}=\mathbf{c}\mathcal{A}(T_{2},\psi) $ and suppose $ |\psi(1)| < |\psi(2)| $. 
The following are equivalent. 
\begin{enumerate}[(1)]
\item $ \psi(1) \subseteq \psi(2) \cap (\psi(2)+1) $. 
\item $ \mathcal{A} $ is supersolvable. 
\item $ \mathcal{A} $ is free. 
\end{enumerate}
\end{proposition}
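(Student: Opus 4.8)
The plan is to establish the implications through the characteristic polynomial, closely mirroring the proof of Proposition \ref{prop:T-psi1=2}. Write $m_i := |\psi(i)|$, so that $m_1 < m_2$, and recall that $\mathbf{c}\mathcal{A}(T_2,\psi)$ consists of the hyperplanes $x_1 = az$ $(a \in \psi(1))$, $x_2 = bz$ $(b \in \psi(2))$, together with $H_0$, $H_1$, and $H_\infty$. Since (2) $\Rightarrow$ (3) is immediate from Theorem \ref{thm:exp-ss}, I would prove the equivalence (3) $\Leftrightarrow$ (1) and then close the loop with (1) $\Rightarrow$ (2).

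First I would compute the Ziegler restriction $(\mathcal{A}^{H_\infty}, m^{H_\infty})$. Restricting to $z = 0$ collapses the hyperplanes onto the three lines $x_1 = 0$, $x_2 = 0$, $x_1 - x_2 = 0$, carrying multiplicities $m_1$, $m_2$, and $2$ respectively. Feeding $\{m_1, m_2, 2\}$ into Theorem \ref{thm:Wakamiko} and using $m_1 < m_2$ (so that, after ordering, $m_1 + 2 - 1 \le m_2$), I obtain $\exp(\mathcal{A}^{H_\infty}, m^{H_\infty}) = \{m_1 + 2, m_2\}$. Next, exactly as in Proposition \ref{prop:T-psi1=2}, the deletion $\mathcal{A} \setminus \{H_0, H_1\}$ is supersolvable: the $x_2$-axis $\{x_1 = z = 0\}$ is a modular coatom whose localization $\{x_1 = az\} \cup \{H_\infty\}$ is a rank-$2$ pencil, giving exponents $\{1, m_1, m_2\}$ and $\overline{\chi}_{\mathcal{A} \setminus \{H_0, H_1\}}(t) = (t - m_1)(t - m_2)$. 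The same restriction computations as in Proposition \ref{prop:T-psi1=2} yield $\overline{\chi}_{(\mathcal{A} \setminus \{H_1\})^{H_0}}(t) = t - |\psi(1) \cup \psi(2)|$ and $\overline{\chi}_{\mathcal{A}^{H_1}}(t) = t - |\psi(1) \cup (\psi(2)+1)|$, so that Deletion--Restriction gives
\[
\overline{\chi}_\mathcal{A}(t) = (t - m_1)(t - m_2) - \bigl(t - |\psi(1) \cup \psi(2)|\bigr) - \bigl(t - |\psi(1) \cup (\psi(2)+1)|\bigr).
\]

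Setting $P := |\psi(1) \cup \psi(2)|$ and $Q := |\psi(1) \cup (\psi(2)+1)|$, Theorem \ref{thm:Yo-3arr} tells us that $\mathcal{A}$ is free if and only if $\overline{\chi}_\mathcal{A}(t) = (t - m_1 - 2)(t - m_2)$. Expanding both expressions, the coefficients of $t$ match automatically, so freeness is equivalent to equality of the constant terms, namely $P + Q = 2 m_2$. Since $P \ge |\psi(2)| = m_2$ with equality iff $\psi(1) \subseteq \psi(2)$, and $Q \ge |\psi(2)+1| = m_2$ with equality iff $\psi(1) \subseteq \psi(2)+1$, the relation $P + Q = 2m_2$ forces both inclusions, i.e. $\psi(1) \subseteq \psi(2) \cap (\psi(2)+1)$. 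This establishes (3) $\Leftrightarrow$ (1), with the boundary case $m_2 = 1$ (forcing $\psi(1) = \varnothing$, whence (1) holds vacuously) covered uniformly by the same formulas.

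Finally, for (1) $\Rightarrow$ (2) I would produce an explicit M-chain. Assuming (1), I claim the $x_1$-axis $X := \{x_2 = z = 0\}$ is a modular coatom, with $\mathcal{A}_X = \{x_2 = bz : b \in \psi(2)\} \cup \{H_\infty\}$ a rank-$2$ pencil (hence supersolvable) and $\mathcal{A} \setminus \mathcal{A}_X = \{x_1 = az : a \in \psi(1)\} \cup \{H_0, H_1\}$; such a coatom yields the desired chain. Modularity is checked via Proposition \ref{prop:ss}: the meet of two hyperplanes $x_1 = az$, $x_1 = a'z$ lies in $H_\infty$; the meet $H_0 \cap H_1$ lies in $H_\infty$; the meet of $x_1 = az$ with $H_0$ lies in $x_2 = az$, which belongs to $\mathcal{A}_X$ precisely because $\psi(1) \subseteq \psi(2)$; and the meet of $x_1 = az$ with $H_1$ lies in $x_2 = (a-1)z$, which belongs to $\mathcal{A}_X$ precisely because $\psi(1) \subseteq \psi(2)+1$. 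This last pair of verifications is the crux of the argument, and it is exactly where the two inclusions constituting condition (1) are each used; I expect no obstacle beyond this careful bookkeeping.
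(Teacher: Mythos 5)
Your proof is correct and follows exactly the template the paper uses for Propositions \ref{prop:T-psi1=2} and \ref{prop:K-psi1=2} (the paper deliberately omits the proof of this one as "very similar"): Wakamiko's theorem for the Ziegler restriction, deletion--restriction for $\overline{\chi}_{\mathcal{A}}$, Theorem \ref{thm:Yo-3arr} to reduce freeness to the numerical identity $|\psi(1)\cup\psi(2)|+|\psi(1)\cup(\psi(2)+1)|=2|\psi(2)|$, and an explicit modular coatom for supersolvability. All the computations check out, including the inequality $m_1+2-1\le m_2$ needed to land in the first case of Wakamiko's formula and the two uses of the inclusions in condition (1) when verifying modularity via Proposition \ref{prop:ss}.
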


\begin{proposition}
 \label{prop:psi1>2}
Let $ \mathcal{A}=\mathbf{c}\mathcal{A}(T_{2},\psi) $ and suppose $ |\psi(1)| > |\psi(2)| $. 
The following are equivalent. 
\begin{enumerate}[(1)]
\item $ \psi(2) \subseteq \psi(1) \cap (\psi(1)-1) $. 
\item $ \mathcal{A} $ is supersolvable. 
\item $ \mathcal{A} $ is free. 
\end{enumerate}
\end{proposition}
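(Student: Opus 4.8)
The plan is to follow the same template as the proofs of Propositions \ref{prop:T-psi1=2} and \ref{prop:K-psi1=2}, reducing the freeness to the $3$-arrangement criterion (Theorem \ref{thm:Yo-3arr}) and establishing supersolvability by exhibiting an explicit modular coatom. Write $m_1 \coloneqq |\psi(1)|$ and $m_2 \coloneqq |\psi(2)|$, so $m_1 > m_2 \ge 0$; recall $H_c$ denotes $x_1-x_2=cz$, so $\mathcal{A}=\mathbf{c}\mathcal{A}(T_2,\psi)$ consists of the lines $x_1=az$ $(a\in\psi(1))$, $x_2=bz$ $(b\in\psi(2))$, $H_0$, $H_1$, and $z=0$. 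The degenerate case $m_2=0$ makes condition (1) vacuous and is disposed of directly, so I would assume $m_2\ge 1$, whence $m_1\ge 2$.

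For the equivalence of (1) and (3) I would first compute the Ziegler restriction $(\mathcal{A}^{H_\infty},m^{H_\infty})$: it is the multiarrangement of the three lines $x_1=0$, $x_2=0$, $x_1-x_2=0$ with multiplicities $m_1$, $m_2$, $2$. Since $m_1=\max$ and $m_2+1\le m_1$, Theorem \ref{thm:Wakamiko} lands in its first case and gives $\exp(\mathcal{A}^{H_\infty},m^{H_\infty})=\{m_1,m_2+2\}$. Deleting $H_1$ and then $H_0$, and using that $\mathcal{A}\setminus\{H_0,H_1\}$ is a supersolvable product with $\overline{\chi}(t)=(t-m_1)(t-m_2)$, the Deletion--Restriction formula yields
\begin{align*}
\overline{\chi}_\mathcal{A}(t) = (t-m_1)(t-m_2) - \bigl(t-|\psi(1)\cup\psi(2)|\bigr) - \bigl(t-|\psi(1)\cup(\psi(2)+1)|\bigr).
\end{align*}
By Theorem \ref{thm:Yo-3arr}, $\mathcal{A}$ is free if and only if $\overline{\chi}_\mathcal{A}(t)=(t-m_1)(t-m_2-2)$, which after cancellation is equivalent to the single identity $|\psi(1)\cup\psi(2)|+|\psi(1)\cup(\psi(2)+1)|=2m_1$. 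Each term is at least $m_1=|\psi(1)|$, so the identity forces both $\psi(2)\subseteq\psi(1)$ and $\psi(2)\subseteq\psi(1)-1$, i.e. exactly condition (1); this proves (1) $\Leftrightarrow$ (3).

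For supersolvability I would prove (1) $\Rightarrow$ (2) directly and close the loop through Theorem \ref{thm:exp-ss}. Assuming (1), set $X\coloneqq\{x_1=z=0\}$, so $\mathcal{A}_X=\{x_1=az:a\in\psi(1)\}\cup\{z=0\}$ is a rank-$2$ pencil. Using Proposition \ref{prop:ss} I would verify $X$ is a modular coatom: the cases that use (1) are the intersections of a line $x_2=bz$ with $H_0$ and with $H_1$, namely $\{x_1=x_2=bz\}$ and $\{x_1=(b+1)z,\ x_2=bz\}$, which lie in $\{x_1=bz\}$ and $\{x_1=(b+1)z\}$ respectively, both in $\mathcal{A}_X$ precisely because $b\in\psi(2)\subseteq\psi(1)\cap(\psi(1)-1)$; the remaining pairwise intersections all lie in $\{z=0\}\in\mathcal{A}_X$. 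Since a rank-$2$ pencil is supersolvable, prepending its M-chain to $\mathcal{A}_X\subseteq\mathcal{A}$ gives an M-chain for $\mathcal{A}$, yielding (2).

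The only genuinely delicate points are getting the Wakamiko case distinction and exponent ordering right, and checking the modular-coatom condition where both halves of (1) enter. I would also record the conceptual shortcut: by Remark \ref{rem:interchange} and the isomorphism $T_2^{\mathrm{conv}}\cong T_2$ obtained by swapping the two vertices, $\mathbf{c}\mathcal{A}(T_2,\psi)$ is linearly equivalent to $\mathbf{c}\mathcal{A}(T_2,\widetilde{\psi})$ with $\widetilde{\psi}(1)=-\psi(2)$, $\widetilde{\psi}(2)=-\psi(1)$, which satisfies $|\widetilde{\psi}(1)|<|\widetilde{\psi}(2)|$; negating the intervals carries condition (1) of Proposition \ref{prop:psi1<2} onto the present condition (1), so Proposition \ref{prop:psi1>2} in fact follows at once from Proposition \ref{prop:psi1<2}.
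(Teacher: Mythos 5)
Your proposal is correct and follows exactly the template the paper uses for the two propositions it proves in detail (Wakamiko's formula for the Ziegler restriction, Deletion--Restriction plus Theorem \ref{thm:Yo-3arr} for the freeness criterion, and an explicit modular coatom via Proposition \ref{prop:ss} for supersolvability), which is precisely what the authors intend for the omitted proofs. Your closing observation that the statement also follows directly from Proposition \ref{prop:psi1<2} via Remark \ref{rem:interchange} and the self-converseness of $T_2$ under swapping vertices is a valid and cleaner shortcut, consistent with how the paper uses that remark elsewhere.
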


\begin{proposition}
 \label{prop:K-1=2-1}
Let $ \mathcal{A}=\mathbf{c}\mathcal{A}(K^{\ast}_{2},\psi) $ and suppose $m=|\psi(1)| = |\psi(2)|-1 $. 
The following are equivalent. 
\begin{enumerate}[(1)]
\item $ \psi(1) = \psi(2)\cap(\psi(2)+1) $ or $ \psi(1)=\psi(2)\cap(\psi(2)-1) $. 
\item $ \mathcal{A} $ is free.
\end{enumerate}
In addition, $ \mathcal{A} $ is supersolvable if and only if $ \mathcal{A} $ is free and $0\le m\le 1$. 
\end{proposition}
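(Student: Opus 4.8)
The plan is to follow very closely the strategy of the proofs of Propositions \ref{prop:T-psi1=2} and \ref{prop:K-psi1=2}, since $\mathcal{A}=\mathbf{c}\mathcal{A}(K^{\ast}_{2},\psi)$ is assembled from exactly the same hyperplanes $x_1=az$ $(a\in\psi(1))$, $x_2=bz$ $(b\in\psi(2))$, the three diagonals $H_0,H_1,H_{-1}$, and $H_\infty$; the only new feature is that now $|\psi(1)|=m$ while $|\psi(2)|=m+1$. First I would dispose of the degenerate case $m=0$ (so $\psi(1)=\varnothing$ and $\psi(2)$ is a single point) by a direct computation: here condition (1) holds automatically, $\overline{\chi}_{\mathcal{A}}(t)=(t-1)(t-3)$, the diagonal flat $\{x_1=x_2,z=0\}$ is a modular coatom by Proposition \ref{prop:ss}, and $\mathcal{A}$ is supersolvable with $\exp(\mathcal{A})=\{1,1,3\}$. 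For the remainder I assume $m\ge1$.

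The core computation is the Ziegler restriction onto $H_\infty$. As in the two model proofs, $(\mathcal{A}^{H_\infty},m^{H_\infty})$ is a multiarrangement on the three lines $x_1=0$, $x_2=0$, $x_1=x_2$ in $\mathbb{K}^2$, now carrying multiplicities $m$, $m+1$, and $3$ respectively. Applying Theorem \ref{thm:Wakamiko} with a short case distinction (according as $m=1$, where $3$ is the largest multiplicity, or $m\ge2$, where $m+1$ is) yields $\exp(\mathcal{A}^{H_\infty},m^{H_\infty})=\{m+2,m+2\}$ for every $m\ge1$. By Theorem \ref{thm:Yo-3arr} it then suffices to decide when $\overline{\chi}_{\mathcal{A}}(t)=(t-(m+2))^2$. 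I would compute $\overline{\chi}_{\mathcal{A}}(t)$ by the same deletion--restriction scheme as in Proposition \ref{prop:K-psi1=2}, deleting $H_0,H_1,H_{-1}$ in turn: the deletion $\mathcal{A}\setminus\{H_0,H_1,H_{-1}\}=\{x_1=az\}\cup\{x_2=bz\}\cup\{H_\infty\}$ contains no diagonal, hence is supersolvable (modular coatom $\{x_1=0,z=0\}$ via Proposition \ref{prop:ss}) with $\overline{\chi}(t)=(t-m)(t-m-1)$, while each restriction onto $H_c$ contributes a term $t-|\psi(1)\cup(\psi(2)+c)|$ for $c\in\{-1,0,1\}$. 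Comparing coefficients with $(t-(m+2))^2$, freeness reduces to the single identity $\sum_{c\in\{-1,0,1\}}|\psi(1)\cup(\psi(2)+c)|=3m+4$, equivalently $T\coloneqq\sum_{c\in\{-1,0,1\}}|\psi(1)\cap(\psi(2)+c)|=3m-1$.

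The main obstacle is translating this numerical identity into condition (1). Writing $T=\sum_{x\in\psi(1)}|\{x-1,x,x+1\}\cap\psi(2)|$ and putting $\psi(2)=[a_2,b_2]$, each $x\in\psi(1)$ contributes $3$ when $x\in[a_2+1,b_2-1]$, contributes $2$ when $x\in\{a_2,b_2\}$, and contributes at most $1$ when $x\notin\psi(2)$; thus the deficiency $3m-T$ is the sum of these per-point defects, and is nonnegative. I would argue that, for a length-$m$ integer interval $\psi(1)$, this deficiency equals $1$ precisely when $\psi(1)$ is obtained from $[a_2,b_2]$ by deleting one of its two endpoints: any point outside $\psi(2)$ already forces deficiency at least $2$, while $\psi(1)\subseteq[a_2,b_2]$ of length $m$ must contain exactly one endpoint of $\psi(2)$. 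Hence $T=3m-1$ if and only if $\psi(1)=[a_2,b_2-1]=\psi(2)\cap(\psi(2)-1)$ or $\psi(1)=[a_2+1,b_2]=\psi(2)\cap(\psi(2)+1)$, which is exactly (1); this establishes (1)$\Leftrightarrow$(2).

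For the supersolvability statement, suppose $\mathcal{A}$ is free, so $\exp(\mathcal{A})=\{1,m+2,m+2\}$ by Theorem \ref{thm:Yo-3arr}. By Theorem \ref{thm:exp-ss} any M-chain must pass through a modular coatom $\mathcal{A}_X$ with $|\mathcal{A}_X|=|\mathcal{A}|-(m+2)=m+3$. An inspection of all rank-$2$ flats shows the only possible localization sizes are $m+1$, $m+2$, and $4$ (for the three flats lying in $H_\infty$) and at most $3$ (for flats off $H_\infty$); hence a coatom of size $m+3$ exists only when $m=1$, namely the diagonal coatom $\{x_1=x_2,z=0\}$ of size $4$. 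Thus $\mathcal{A}$ is not supersolvable for $m\ge2$, while for $m=1$ (and the $m=0$ case already handled) the required M-chain is produced by checking that this diagonal coatom is modular via Proposition \ref{prop:ss}, using (1). Combining the two regimes gives $\mathcal{A}$ supersolvable $\Leftrightarrow$ $\mathcal{A}$ free and $0\le m\le1$, the converse directions being routine verifications with Proposition \ref{prop:ss}.
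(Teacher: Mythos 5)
Your proof is correct and follows exactly the template the paper prescribes for this proposition (whose proof it omits as "very similar" to Propositions \ref{prop:T-psi1=2} and \ref{prop:K-psi1=2}): Wakamiko's theorem gives $\exp(\mathcal{A}^{H_\infty},m^{H_\infty})=\{m+2,m+2\}$ for $m\ge 1$, deletion--restriction over $H_{-1},H_0,H_1$ reduces freeness via Theorem \ref{thm:Yo-3arr} to the cardinality identity, and the per-point deficiency count correctly translates that identity into condition (1). The supersolvability analysis via localization sizes of rank-$2$ flats and Proposition \ref{prop:ss} also checks out.
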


\begin{proposition}
 \label{prop:K*-psi1<2-2}
Let $ \mathcal{A}=\mathbf{c}\mathcal{A}(K^{\ast}_{2},\psi) $ and suppose  $ |\psi(1)| \leq |\psi(2)|-2 $. 
The following are equivalent. 
\begin{enumerate}[(1)]
\item $ \psi(1) \subseteq (\psi(2)-1)\cap \psi(2)\cap(\psi(2)+1) $. 
\item $ \mathcal{A} $ is supersolvable. 
\item $ \mathcal{A} $ is free. 
\end{enumerate}
\end{proposition}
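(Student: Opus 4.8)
The plan is to mirror the proof of Proposition~\ref{prop:K-psi1=2}, using the Ziegler restriction together with Theorems~\ref{thm:Wakamiko} and~\ref{thm:Yo-3arr} for the freeness equivalence, and an explicit M-chain for the supersolvability. Write $m_i:=|\psi(i)|$, so $m_1\le m_2-2$. Recall that $\mathcal{A}=\mathbf{c}\mathcal{A}(K^{\ast}_2,\psi)$ consists of the planes $x_1=az$ $(a\in\psi(1))$, $x_2=bz$ $(b\in\psi(2))$, $H_c\colon x_1-x_2=cz$ $(c\in\{-1,0,1\})$ and $H_\infty\colon z=0$. Restricting to $H_\infty$ gives the multiarrangement on the three lines $x_1=0$, $x_2=0$, $x_1=x_2$ with Ziegler multiplicities $m_1,m_2,3$. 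First I would apply Theorem~\ref{thm:Wakamiko}: the hypothesis $m_1+2\le m_2$ is exactly the inequality needed (with $m_2$, or $3$ in the degenerate case $m_1=0,m_2=2$, playing the role of the largest multiplicity) to conclude $\exp(\mathcal{A}^{H_\infty},m^{H_\infty})=\{m_1+3,\,m_2\}$.

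Next I would compute $\overline{\chi}_{\mathcal{A}}(t)$ by deleting $H_0,H_1,H_{-1}$ one at a time, exactly as in Proposition~\ref{prop:K-psi1=2}. The subarrangement $\mathcal{A}\setminus\{H_0,H_1,H_{-1}\}$, namely the cone over the $m_1$ lines $x_1=a$ and the $m_2$ lines $x_2=b$, is supersolvable with $\overline{\chi}=(t-m_1)(t-m_2)$, and the three restrictions contribute $t-u$, $t-v$, $t-w$ where $u=|\psi(1)\cup(\psi(2)-1)|$, $v=|\psi(1)\cup\psi(2)|$, $w=|\psi(1)\cup(\psi(2)+1)|$. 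The Deletion-Restriction formula (\cite[Theorem 2.56]{OT92}) then gives
\begin{align*}
\overline{\chi}_{\mathcal{A}}(t)=(t-m_1)(t-m_2)-(t-u)-(t-v)-(t-w).
\end{align*}
Since the coefficient of $t$ here already agrees with that of $(t-m_1-3)(t-m_2)$, Theorem~\ref{thm:Yo-3arr} shows that $\mathcal{A}$ is free if and only if the constant terms match, i.e. $u+v+w=3m_2$. As $|\psi(2)-1|=|\psi(2)|=|\psi(2)+1|=m_2$ forces $u,v,w\ge m_2$, equality holds iff $u=v=w=m_2$, that is, $\psi(1)$ is contained in each of $\psi(2)-1$, $\psi(2)$, $\psi(2)+1$; this is precisely condition~(1). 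Hence (1)$\Leftrightarrow$(3).

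Because supersolvability implies freeness (Theorem~\ref{thm:exp-ss}), all that remains is (1)$\Rightarrow$(2), and this is where I expect the real work to lie. The plan is to exhibit the modular coatom $\mathcal{A}_X=\{x_2=bz\mid b\in\psi(2)\}\cup\{H_\infty\}$, the localization at the line $X=\{x_2=z=0\}$, whose complement $\{x_1=az\}\cup\{H_c\}$ has size $m_1+3$. The single place where condition~(1) is genuinely used is the verification via Proposition~\ref{prop:ss}: the flat $\{x_1=az\}\cap H_c$ lies on $x_2=(a-c)z$, and this plane belongs to $\mathcal{A}_X$ precisely because $a-c\in\psi(2)$ for all $c\in\{-1,0,1\}$, which is equivalent to $\psi(1)\subseteq(\psi(2)-1)\cap\psi(2)\cap(\psi(2)+1)$. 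The remaining pairs intersect inside $H_\infty$ (indeed $\{x_1=az\}\cap\{x_1=a'z\}$ and $H_c\cap H_{c'}$ both lie in $z=0$), so $\mathcal{A}_X$ is a modular coatom; being of rank $2$ it is supersolvable, which completes an M-chain and proves $\mathcal{A}$ supersolvable. The main obstacle is therefore identifying the correct modular coatom and checking the covering condition of Proposition~\ref{prop:ss}, rather than any heavy computation; the small degenerate cases $m_1=0$ and $m_2=2$ merely need to be inspected separately.
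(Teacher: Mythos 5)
Your proposal is correct and follows exactly the template the paper intends: the authors omit this proof, remarking that it is ``very similar'' to Propositions \ref{prop:T-psi1=2} and \ref{prop:K-psi1=2}, and your argument reproduces that scheme (Wakamiko for the Ziegler exponents $\{m_1+3,m_2\}$, deletion--restriction plus Theorem \ref{thm:Yo-3arr} for (1)$\Leftrightarrow$(3), and the modular coatom $\A_X$ with $X=\{x_2=z=0\}$ checked via Proposition \ref{prop:ss} for (1)$\Rightarrow$(2)). All the individual verifications, including the inequality $m_1+2\le m_2$ matching Wakamiko's case split and the equality case $u=v=w=m_2$, are accurate.
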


We are ready to prove Lemma \ref{coincidence local freeness}. 

\begin{proof}[\tbf{Proof of Lemma \ref{coincidence local freeness}}]
In view of Remark \ref{rem:interchange}, it suffices to prove (1).
By the discussion up to Lemma \ref{lem:L2}, the assertion holds trivially for the localizations of types (L\ref{localization type 3}) and  (L\ref{localization type 4}). 
By Lemma \ref{lem:L2}, the assertion holds true for the localization of type (L\ref{localization type 2}) as well. 
It is because if $\B$ is a type-(L\ref{localization type 2}) localization of $ \mathbf{c}\mathcal{A}(G,\psi) $ whose underlying digraph admits the coking $v$ as a vertex, then after taking the CEO operation w.r.t. $v$ the resulting type-(L\ref{localization type 2}) localization $\B'$ of $ \mathbf{c}\mathcal{A}(G',\psi') $ also has the underlying digraph isomorphic to one of the digraphs in Figure \ref{fig:Type 3}. 

Now we show that  the assertion holds true for the localization of type (L\ref{localization type 1}). 
That is, we show that $ \mathbf{c}\mathcal{A}(G[\{i,j\}], \psi|_{\{i,j\}}) $ is free if and only if $ \mathbf{c}\mathcal{A}(G^{\prime}[\{i,j\}], \psi^{\prime}|_{\{i,j\}}) $ is free for $ 1 \leq i < j \leq \ell $.  
If  $i\ne v$ and $j\ne v$, then $ \psi^{\prime}(i) = [a_{i}-1, b_{i}] $ and $ \psi^{\prime}(j) = [a_{j}-1, b_{j}] $. 
It is not hard to check by using Propositions \ref{prop:K-Nish}-\ref{prop:K*-psi1<2-2} that the statement holds true.
Now consider $ j = v $ in which case $ G[\{i,v\}] $ is isomorphic to $ T_{2} $ or $ K^{\ast}_{2} $. 

First suppose that $ G[\{i,v\}] $ is isomorphic to $ T_{2} $. 
Using Propositions \ref{prop:T-psi1=2}, \ref{prop:psi1<2}, \ref{prop:psi1>2} and Condition \ref{def:conditions}(\tbf{C}) we have that  $ \mathbf{c}\mathcal{A}(G[\{i,v\}], \psi|_{\{i,v\}}) $ is free if and only if one of the following conditions holds. 
\begin{enumerate}[({A}1)]
\item\label{a1} $ [a_{i}, b_{i}] = [a_{v}, b_{v}] $. 
\item\label{a2} $ [a_{i}, b_{i}] = [a_{v}+1, b_{v}+1] $. 
\item\label{a3} $ [a_{i}, b_{i}] \subseteq [a_{v}+1, b_{v}] $. 
\item\label{a4} $ [a_{i}, b_{i}-1] = [a_{v}, b_{v}] $. 
\end{enumerate}
In this case $G^{\prime}[\{i,v\}]$ is  isomorphic to $ \overline{K^{\ast}_{2}} $. 
Thus by Proposition \ref{prop:K-Nish}, $ \mathbf{c}\mathcal{A}(G^{\prime}[\{i,v\}], \psi^{\prime}|_{\{i,v\}}) $ is free if and only if one of the following conditions holds. 
\begin{enumerate}[({B}1)]
\item\label{b1} $ [a_{i}-1, b_{i}] \subseteq [a_{v}, b_{v}] $. 
\item\label{b2} $ [a_{i}-1, b_{i}] \supseteq [a_{v}, b_{v}] $. 
\end{enumerate}
The following implications are straightforward: 
(A\ref{a1}) $ \Rightarrow $ (B\ref{b2}), 
(A\ref{a2}) $ \Rightarrow $ (B\ref{b2}), 
(A\ref{a3}) $ \Leftrightarrow $ (B\ref{b1}), 
and (A\ref{a4}) $ \Rightarrow $ (B\ref{b2}). 
Furthermore, Condition \ref{def:conditions}(\tbf{C}) and (B\ref{b2}) imply $ [a_{i}, b_{i}-1] \subseteq [a_{v}, b_{v}] \subseteq [a_{i}-1, b_{i}] $. 
Hence $ [a_{v}, b_{v}] $ equals either $ [a_{i}, b_{i}], [a_{i}-1, b_{i}-1], [a_{i}-1, b_{i}] $, or $ [a_{i}, b_{i}-1] $. 
These lead to (A\ref{a1}), (A\ref{a2}), (A\ref{a3}), and (A\ref{a4}), respectively. 
Thus the assertion holds true. 

Next suppose that $ G[\{i,v\}] $ is isomorphic to $ K^{\ast}_{2} $. 
Thus, $ \mathbf{c}\mathcal{A}(G[\{i,v\}], \psi|_{\{i,v\}}) $ is free if and only if one of the following conditions holds. 
\begin{enumerate}[({C}1)]
\item\label{c1} $ [a_{i}, b_{i}] = [a_{v}, b_{v}] $. 
\item\label{c2} $ [a_{i}, b_{i}] = [a_{v}+1, b_{v}] $. 
\item\label{c3} $ [a_{i}, b_{i}] = [a_{v}, b_{v}-1] $. 
\item\label{c5} $ [a_{v}, b_{v}] = [a_{i}, b_{i}-1] $. 
\item\label{c6} $ [a_{i}, b_{i}] \subseteq [a_{v}+1, b_{v}-1] $.
\end{enumerate}
Again Condition \ref{def:conditions}(\tbf{C}) is crucial here.
(C\ref{c1}) is derived from Proposition \ref{prop:K-psi1=2} noting that $m=1$ can not occur since  $ 0 \in \psi(i) $ for all $ i \in [\ell] $. 
Note also that $ [a_{i}, b_{i}-1] \subseteq [a_{v}, b_{v}]$, in particular, $ |\psi(i)| -1 \leq |\psi(v)| $ for all $ i \ne v$. 
(C\ref{c5}) and  (C\ref{c2})\&(C\ref{c3}) are derived from Proposition \ref{prop:K-1=2-1} according to $ |\psi(i)| -1 = |\psi(v)| $ or $ |\psi(v)| -1 = |\psi(i)| $, respectively. 
Finally, (C\ref{c6})  is derived from Proposition \ref{prop:K*-psi1<2-2}.
In this case $G^{\prime}[\{i,v\}]$ is  isomorphic to $T_2 $. 
Note that $ \psi^{\prime}(i) = [a_{i}-1, b_{i}] $ and $ \psi^{\prime}(v) = [a_v, b_{v}] $. 
Thus, $ \mathbf{c}\mathcal{A}(G^{\prime}[\{i,v\}], \psi^{\prime}|_{\{i,v\}}) $ is free if and only if one of the following conditions holds. 
\begin{enumerate}[({D}1)]
\item\label{d1} $ [a_{v}, b_{v}] = [a_{i}-1, b_{i}] $. 
\item\label{d2} $ [a_{v}, b_{v}] = [a_{i}, b_{i}+1] $. 
\item\label{d3} $ [a_{v}, b_{v}] \subseteq [a_{i}, b_{i}] $.
\item\label{d4} $ [a_{v}, b_{v}-1] \supseteq [a_{i}-1, b_{i}] $.  
\end{enumerate}
The following implications are straightforward: 
(C\ref{c1}) $ \Rightarrow $ (D\ref{d3}), 
(C\ref{c2}) $ \Leftrightarrow $ (D\ref{d1}), 
(C\ref{c3}) $ \Leftrightarrow $ (D\ref{d2}), 
(C\ref{c5}) $ \Rightarrow $ (D\ref{d3}), 
and (C\ref{c6}) $ \Leftrightarrow $ (D\ref{d4}).
Furthermore, Condition \ref{def:conditions}(\tbf{C}) and  (D\ref{d3}) imply $ [a_{i}, b_{i}-1] \subseteq [a_{v}, b_{v}] \subseteq [a_{i}, b_{i}] $. 
Hence $ [a_{v}, b_{v}] $ is equal to $ [a_{i}, b_{i}] $ or $ [a_{i}, b_{i}-1] $. 
These are (C\ref{c1}) and (C\ref{c5}), respectively. 
Therefore the assertion holds true. 
\end{proof}

\subsection{Proof of Theorem \ref{thm:freeness-stable} and applications}
\label{subsec:freeness-stable}
We are ready to prove Theorems \ref{thm:freeness-stable} and \ref{main1}. 

\begin{proof}[\tbf{Proof of  Theorem \ref{thm:freeness-stable}}]
It follows from Theorem \ref{thm:Yoshinaga's criterion}, Lemma \ref{coincidence ziegler restrictions} and Lemma \ref{coincidence local freeness}.
\end{proof}

\begin{proof}[\tbf{Proof of  Theorem \ref{main1}}]
By Proposition \ref{prop:Akl modified}, $ \mathcal{A}_{\ell+1}^{k+1} \aff \mathcal{A}(T_{\ell}^{k}, \psi_{\ell}^{k}) \times \Phi_{1} $. 
Hence it is sufficient to prove that $ \mathbf{c}\mathcal{A}(T_{\ell}^{k}, \psi_{\ell}^{k}) $ if free for each $ k \in [\ell] $. 
We will proceed by a downward induction on $ k $. 
When $ k = \ell $, by \cite[Theorem 1.3]{AST17} or Proposition \ref{prop:simplicial}, the cone over $ \Ish(\ell+1) = \mathcal{A}_{\ell+1}^{\ell} \aff \mathcal{A}(T_{\ell}^{\ell},\psi_{\ell}^{\ell}) \times \Phi_{1} $ and $ \mathbf{c}\mathcal{A}(T_{\ell}^{\ell},\psi_{\ell}^{\ell}) $ are supersolvable and hence $ \mathbf{c}\mathcal{A}(T_{\ell}^{\ell},\psi_{\ell}^{\ell}) $ is free. 

Suppose that $ k < \ell $ and $ \mathcal{A}(T_{\ell}^{k+1},\psi_{\ell}^{k+1}) $ is free. 
By Proposition \ref{prop:simplicial} and Proposition \ref{prop:isolated}, $ \mathcal{A}(T_{\ell}^{k+1}[\ell-k+1],\psi_{\ell}^{k+1}|_{[\ell-k+1]}) $ is free. 
Recall the CEO $ ((T_{\ell}^{k}[\ell-k+1])^{\prime}, (\psi_{\ell}^{k}|_{[\ell-k+1]})^{\prime}) = (T_{\ell}^{k+1}[\ell-k+1], \psi_{\ell}^{k+1}|_{[\ell-k+1]}) $ in Theorem \ref{thm:Akl} with respect to the coking $ v = \ell-k+1 $ in $ T_{\ell}^{k}[\ell-k+1] $. 
Since for any $ i \in [\ell-k+1] $, $ |\psi_{\ell}^{k}(i)| = k+1 $, Condition \ref{def:conditions}(\tbf{Z}) is satisfied with $ n_{0} = k-1 $. 
Since Condition \ref{def:conditions}(\tbf{C}) is also satisfied, by Theorem \ref{thm:freeness-stable}, $ \mathbf{c}\mathcal{A}(T_{\ell}^{k}[\ell-k+1], \psi_{\ell}^{k}|_{[\ell-k+1]}) $ is free.
By Proposition \ref{prop:simplicial} and Proposition \ref{prop:isolated} again, $ \mathcal{A}(T_{\ell}^{k},\psi_{\ell}^{k}) $ is free. 
Thus, $ \mathbf{c}\mathcal{A}(T_{\ell}^{k},\psi_{\ell}^{k}) $ is free for every $ k \in [\ell] $. 
Therefore $ \mathbf{c}\mathcal{A}_{\ell}^{k} $ is free for each $ k \in [2,\ell] $. 

%
%
%
%
%

Now we show that if $\ell \ge 3$ then $\cc \mathcal{A}_{\ell}^{k}$  is not supersolvable for $ k \in [2,\ell -1]$. 
It suffices to prove that $ \mathbf{c}\mathcal{A}(T_{\ell}^{k}, \psi_{\ell}^{k}) $ is not supersolvable when $ \ell \geq 2 $ and $ k \in [\ell-1] $. 
The key point here is that $(1,2)$ is always an edge in $T_{\ell}^{k}$ for $ k < \ell $. 
Let $\B$ be the localization of $\textbf{c} \mathcal{A}(T_{\ell}^{k},\psi_{\ell}^{k}) $ on the subspace defined by $ z=x_{1}=x_2=0$.  
Thus $\B=\mathbf{c}\mathcal{A}(T_{2},[-k,0])$ which is not supersolvable by  Proposition \ref{prop:T-psi1=2}. 
Theorem \ref{thm:localization-ss-free} completes the proof.
\end{proof}
 
In particular,  if $\ell \ge 3$ then the intersection posets $ L(\cc \mathcal{A}_{\ell}^{k}) $ and $ L(\cc\Ish(\ell)) $ are not isomorphic for any $2 \leq k \leq \ell-1$. In the theorem below, we show that a similar phenomenon occurs in the Catalan arrangement.

For any $ k \in [\ell] $, define vertex-weighted digraphs $ (C_{\ell}^{k}, \psi_{\ell}^{k}) $ and $ (D_{\ell}^{k}, \phi_{\ell}^{k}) $ on $ [\ell] $ by
\begin{align*}
E_{C_{\ell}^{k}} &\coloneqq \Set{(i,j) | i,j \in [k,\ell], \, i\neq j}, \\
\psi_{\ell}^{k}(i) &\coloneqq [-\min\{i,k\}, \min\{i,k\}], \\
E_{D_{\ell}^{k}} &\coloneqq \Set{(k,i) | i \in [k+1, \ell]} \cup \Set{(i,j) | i,j \in [k+1, \ell], \, i\neq j}, \\
\phi_{\ell}^{k}(i) &\coloneqq \begin{cases}
[-\min\{i,k\}, \min\{i,k\}] & ( i \leq k), \\
[-\min\{i,k\}-1, \min\{i,k\}] & (i > k). 
\end{cases}
\end{align*}
Note that $ \Cat(\ell+1) = \mathcal{A}(K^{\ast}_{\ell}, [-1,1]) = \mathcal{A}(C_{\ell}^{1}, \psi_{\ell}^{1}) $ and $ \mathcal{A}(C_{\ell}^{\ell}, \psi_{\ell}^{\ell}) = \mathcal{A}(\overline{K^{\ast}_{\ell}}, [-i,i]) $ has a supersolvable cone with exponents $ \{1,\ell+2,\ell+3, \dots, 2\ell+1\} $

\begin{theorem}[Catalan arrangement] 
 \label{thm:Cat-CKEO}
Applying the CEO with respect to the coking $ k $ to $ (C_{\ell}^{k},\psi_{\ell}^{k}) $ yields $ (D_{\ell}^{k}, \phi_{\ell}^{k}) $ and applying the KEO with respect to the king $ k $ to $ (D_{\ell}^{k}, \phi_{\ell}^{k}) $ yields $ (C_{\ell}^{k+1},\psi_{\ell}^{k+1}) $. 
Hence there exists a sequence of CEOs and KEOs applying to  $ \Cat(\ell)$ that  preserves both characteristic polynomials and freeness and ends with a $ \psi $-digraphical arrangement having supersolvable cone whose underlying digraph is edgeless. 
In particular,  we obtain a new proof of the well-known fact that $\cc\Cat(\ell)$ is  free with exponents $\{0,1, \ell+1, \ell+2,\ldots,2\ell-1\}$. 
 \end{theorem}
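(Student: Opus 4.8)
The plan is to follow the template established in the proofs of Theorems~\ref{thm:Akl} and~\ref{main1}, treating the vertices $1,\dots,k-1$ of $C_\ell^k$ (which stay isolated throughout) as a tail to be peeled off, and carrying out both operations on the induced subgraph on $[k,\ell]$, where $k$ is genuinely a coking (resp.\ king). First I would verify the two digraph transformations directly from Definition~\ref{deformation coking}. On $[k,\ell]$ the digraph $C_\ell^k$ is the complete digraph $K^*$, so $k$ is a coking; deleting the in-edges $(i,k)$ leaves precisely the edge set of $D_\ell^k$ on $[k,\ell]$, while the CEO weight rule $\psi'(i)=[a_i-1,b_i]$ for $i\neq k$ turns $[-k,k]$ into $[-k-1,k]$ for $i>k$ and leaves $[-k,k]$ at $k$, matching $\phi_\ell^k$. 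Dually, in $D_\ell^k$ the vertex $k$ is a king on $[k,\ell]$; deleting the out-edges $(k,i)$ makes $k$ isolated and leaves $K^*$ on $[k+1,\ell]$, and the KEO rule $\psi''(i)=[a_i,b_i+1]$ turns $[-k-1,k]$ into $[-(k+1),k+1]$, matching $\psi_\ell^{k+1}$. These are routine interval computations.

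Next I would check the numerical hypotheses on the induced subgraph on $[k,\ell]$. For the CEO, every weight there contains $0$ and $[a_i,b_i-1]=[-k,k-1]\subseteq[-k,k]=\psi(k)$, so Condition~\ref{def:conditions}(\tbf{C}) holds; all the weights have size $2k+1$, so Condition~\ref{def:conditions}(\tbf{Z}) holds with $n_0=2k-1$. For the KEO, $[a_i+1,b_i]=[-k,k]\subseteq\psi(k)$ gives Condition~\ref{def:conditions}(\tbf{K}), and the sizes $2k+1$ (at $k$) and $2k+2$ (for $i>k$) again satisfy Condition~\ref{def:conditions}(\tbf{Z}) with $n_0=2k-1$. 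Hence Theorems~\ref{coincidence characteristic polynomials} and~\ref{thm:freeness-stable} apply to the induced subarrangements. To pass from the induced subgraph back to the full arrangement, I would peel the common isolated vertices $1,\dots,k-1$: each has minimal weight ($[-1,1]\subseteq[-2,2]\subseteq\cdots$), so by Proposition~\ref{prop:isolated} it is simplicial, and Proposition~\ref{prop:simplicial} reduces both the characteristic polynomial and the freeness of the full arrangement to those of the subarrangement on $[k,\ell]$. Since these tail vertices and their weights are identical on the two sides of each operation, the reduction factors agree; therefore each CEO and KEO preserves both the characteristic polynomial and the freeness of the full $\psi$-digraphical arrangements, which establishes the first assertion together with the claimed invariances.

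Finally I would assemble the chain $\mathcal{A}(C_\ell^1,\psi_\ell^1)\to\mathcal{A}(D_\ell^1,\phi_\ell^1)\to\mathcal{A}(C_\ell^2,\psi_\ell^2)\to\cdots\to\mathcal{A}(C_\ell^\ell,\psi_\ell^\ell)$, whose terminus is the edgeless $\mathcal{A}(\overline{K^*_\ell},[-i,i])$ with supersolvable, hence free, cone. To land exactly on $\Cat(\ell)$ I would run the same chain one index lower, using $\Cat(\ell)\aff\mathcal{A}(C_{\ell-1}^1,\psi_{\ell-1}^1)\times\Phi_1$; peeling the isolated vertices of the endpoint $\mathcal{A}(\overline{K^*_{\ell-1}},[-i,i])$ in increasing weight order via Propositions~\ref{prop:isolated} and~\ref{prop:simplicial} gives cone exponents $\{1,\ell+1,\ell+2,\dots,2\ell-1\}$. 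Since every CEO and KEO in the chain preserves freeness and the characteristic polynomial, the cone $\cc\mathcal{A}(C_{\ell-1}^1,\psi_{\ell-1}^1)$ is free with the same exponents, and adjoining the $\Phi_1$ factor, which contributes a single exponent $0$ by Proposition~\ref{prop:prod-L(A) free}, yields $\exp(\cc\Cat(\ell))=\{0,1,\ell+1,\dots,2\ell-1\}$. The main obstacle is not conceptual, since the heavy machinery is already in place; it is the bookkeeping of correctly locating the coking/king inside the induced subgraph, tracking the interval weights through both operations, and matching the two index conventions so that the peeling factors at the terminal edgeless digraph reproduce exactly the Catalan exponents.
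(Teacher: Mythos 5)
Your proposal is correct and follows exactly the route the paper intends: its own proof of this theorem is just the one-line remark that it is ``similar to the proofs of Theorems~\ref{thm:Akl} and~\ref{main1}'', and your argument is precisely the fleshed-out version of that template (work on the induced subgraph on $[k,\ell]$ where $k$ is genuinely a coking/king, verify Conditions (\textbf{C}), (\textbf{K}), (\textbf{Z}) with $n_0=2k-1$, and peel the isolated low-index vertices via Propositions~\ref{prop:isolated} and~\ref{prop:simplicial}). The interval computations, the condition checks, and the exponent count $\{0,1,\ell+1,\dots,2\ell-1\}$ at the edgeless terminus all check out.
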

\begin{proof}
Similar to the proofs of Theorem  \ref{thm:Akl} and Theorem \ref{main1}. 

\end{proof}

The sequence mentioned in Theorem \ref{thm:Cat-CKEO} in the case $\ell=4$ is depicted in Figure \ref{fig:Cat}. 

 \begin{figure}[htbp!]
\centering
\begin{subfigure}{.35\textwidth}
  \centering
\begin{tikzpicture}[scale=1]
\draw (0,1.3) node[v](1){} node[above]{$ \textbf{1} [-1,1]$};
\draw (-0.75,0) node[v](2){} node[left]{$ \textbf{2}[-1,1]$};
\draw (0.75,0) node[v](3){} node[right]{$ \textbf{3}[-1,1]$};
\draw[>=Stealth,->, bend right = 10] (1) to (2);
\draw[>=Stealth,->, bend right = 10] (1) to (3);
\draw[>=Stealth,->, bend right = 10, red] (2) to (1);
\draw[>=Stealth,->, bend right = 10] (2) to (3);
\draw[>=Stealth,->, bend right = 10, red] (3) to (1);
\draw[>=Stealth,->, bend right = 10] (3) to (2);
\end{tikzpicture}
  \caption*{$ \cc\Cat(4)  \aff  \cc\mathcal{A}(K^{\ast}_{3},[-1,1]) $ is not supersolvable}
\end{subfigure}%
\begin{subfigure}{.35\textwidth}
  \centering
\begin{tikzpicture}[scale=1]
\draw (0,1.3) node[v](1){} node[above]{$ \textbf{1}[-1,1]$};
\draw (-0.75,0) node[v](2){} node[left]{$ \textbf{2}[-2,1]$};
\draw (0.75,0) node[v](3){} node[right]{$ \textbf{3}[-2,1]$};
\draw[>=Stealth,->,red] (1) to (2);
\draw[>=Stealth,->,red] (1) to (3);
\draw[>=Stealth,->, bend right = 10] (2) to (3);
\draw[>=Stealth,->, bend right = 10] (3) to (2);
\end{tikzpicture}
  \caption*{not supersolvable}
\end{subfigure}%
\begin{subfigure}{.35\textwidth}
  \centering
\begin{tikzpicture}[scale=1]
\draw (0,1.3) node[v](1){} node[above]{$ \textbf{1}[-1,1]$};
\draw (-0.75,0) node[v](2){} node[left]{$ \textbf{2}[-2,2]$};
\draw (0.75,0) node[v](3){} node[right]{$ \textbf{3}[-2,2]$};
\draw[>=Stealth,->, bend right = 10] (2) to (3);
\draw[>=Stealth,->,red, bend right = 10] (3) to (2);
\end{tikzpicture}
  \caption*{not supersolvable}
\end{subfigure}%

\bigskip
\begin{subfigure}{.45\textwidth}
  \centering
\begin{tikzpicture}[scale=1]
\draw (0,1.3) node[v](1){} node[above]{$ \textbf{1}[-1,1]$};
\draw (-0.75,0) node[v](2){} node[left]{$ \textbf{2}[-2,2]$};
\draw (0.75,0) node[v](3){} node[right]{$ \textbf{3}[-3,2]$};
\draw[>=Stealth,->,red] (2) to (3);
\end{tikzpicture}
  \caption*{supersolvable}
\end{subfigure}%
\begin{subfigure}{.45\textwidth}
  \centering
\begin{tikzpicture}[scale=1]
\draw (0,1.3) node[v](1){} node[above]{$ \textbf{1}[-1,1]$};
\draw (-0.75,0) node[v](2){} node[left]{$ \textbf{2}[-2,2]$};
\draw (0.75,0) node[v](3){} node[right]{$ \textbf{3}[-3,3]$};
\end{tikzpicture}
  \caption*{$ \cc\A  (\overline{K^{\ast}_{3}}, [-i,i])$ is supersolvable with exponents $\{1, 5, 6,7\}$}
\end{subfigure}
\caption{A sequence of CEOs and KEOs applying to the Catalan arrangement $\Cat(4)$.}
\label{fig:Cat}
\end{figure}
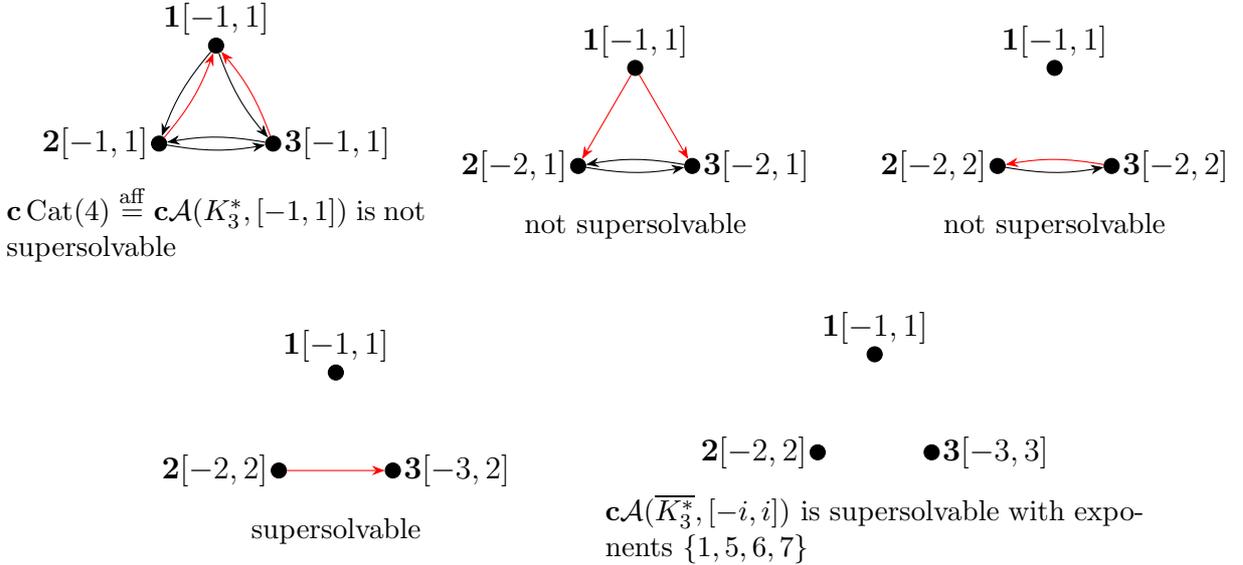

\vskip 1em
\noindent
\textbf{Acknowledgements.} 
We would like to thank Professor Masahiko Yoshinaga for suggesting to us the question of freeness for the arrangements between Shi and Ish. 
The first author is partially supported by JSPS KAKENSHI grant number JP20K20880 and 21H00975.
 The second author was supported by JSPS Research Fellowship for Young Scientists Grant Number 19J12024, and is currently supported by a Postdoctoral Fellowship of the Alexander von Humboldt Foundation.

\bibliographystyle{amsplain}
\bibliography{references}

\end{document}